\newcommand{\R}{\mathbb{R}}
\newcommand{\Z}{\mathbb{Z}}
\newcommand{\M}{\mathbb{M}}
\newcommand{\F}{\mathcal{F}}
\newcommand{\VF}{\mathcal{VF}}
\newcommand{\rst}{\llcorner}
\newcommand{\toF}{\xrightarrow{\F}}
\newcommand{\toVF}{\xrightarrow{\VF}}
\newtheorem{thm}{Theorem}
\newtheorem{lemma}[thm]{Lemma}
\newtheorem{prop}[thm]{Proposition}
\newtheorem{cor}[thm]{Corollary}
\theoremstyle{definition}
\newtheorem{definition}[thm]{Definition}
\theoremstyle{remark}
\newtheorem{remark}{Remark}
\newtheorem*{ack}{Acknowledgments}
\newtheorem*{outline}{Outline}
\DeclareMathOperator{\Div}{div}
\DeclareMathOperator{\set}{set}
\DeclareMathOperator{\Lip}{Lip}
\def\madm{m_{\mathrm{ADM}}}
\def\miso{m_{\mathrm{iso}}}
\begin{document}

%\title[{Lower semicontinuity of GR mass under intrinsic flat convergence}]{Lower semicontinuity of general relativistic mass under Sormani--Wenger intrinsic flat convergence}
\title[{ADM mass under intrinsic flat convergence}]{Lower semicontinuity of ADM mass under intrinsic flat convergence}

\author{Jeffrey L. Jauregui}
\address{Dept. of Mathematics,
Union College, 807 Union St.,
Schenectady, NY 12308}
\email{jaureguj@union.edu}
\author{Dan A. Lee}
\address{Graduate Center and Queens College, City University of New York, 365 Fifth Avenue,
New York, NY 10016, USA}
\email{dan.lee@qc.cuny.edu}

\begin{abstract}
A natural question in mathematical general relativity is how the ADM mass behaves as a functional on the space of asymptotically flat 3-manifolds of nonnegative scalar curvature. In previous results, lower semicontinuity has been established by the first-named author for pointed $C^2$ convergence, and more generally by both authors for pointed $C^0$ convergence (all in the Cheeger--Gromov sense). In this paper, we show this behavior persists for the much weaker notion of pointed Sormani--Wenger intrinsic flat ($\F$) volume convergence, under natural hypotheses. We consider smooth manifolds converging to asymptotically flat local integral current spaces (a new definition), using Huisken's isoperimetric mass as a replacement for the ADM mass. 
Along the way we prove results of independent interest about convergence of subregions of $\F$-converging sequences of integral current spaces.
%General results are also given on the $\F$-convergence of subregions in a convergent sequence of integral current spaces that may have other applications.
\end{abstract}

\maketitle

\section{Introduction}
The ADM mass functional \cite{ADM}, defined on the space $\mathcal{M}$ of asymptotically flat 3-manifolds of nonnegative scalar curvature, is of fundamental importance in general relativity. Such manifolds represent physically reasonable time-symmetric initial data sets for Einstein's equation, and the ADM mass defines their total mass. In prior work the authors have studied the continuity behavior of the ADM mass under various notions of pointed convergence. In general, it is expected that the ADM mass is lower semicontinuous, but of course such a statement depends on the topology placed on $\mathcal{M}$. For pointed $C^2$ Cheeger--Gromov convergence this was shown by the first-named author in dimensions three in \cite{Jau} and up to dimension seven in \cite{Jau2}. For pointed $C^0$ (i.e., locally uniform) Cheeger--Gromov convergence, this was shown by the authors in dimension three in \cite{JL}. (The precise statements are recalled in section \ref{sec_background1}.) However, for applications to some outstanding problems in general relativity (discussed below), a coarser topology than $C^0$ is required, perhaps that given by C. Sormani and S. Wenger's intrinsic flat distance \cite{SW}. Since the limit spaces in $\overline{\mathcal{M}}$ need not be smooth, we require a generalization of the ADM mass that is well-defined in lower regularity: we use G. Huisken's isoperimetric mass \cites{Huisken:2006, Huisken:Morse}.

We briefly describe two major open problems to which the lower semicontinuity of mass would be applicable. First, we recall the rigidity statement of the positive mass theorem \cites{SY,W}, which says that a (smooth) asymptotically flat 3-manifold with nonnegative scalar curvature and zero ADM mass is isometric to Euclidean space. A natural, well-known conjecture is: if a sequence $M_j$ of asymptotically flat 3-manifolds with nonnegative scalar curvature has ADM mass $m_j$ converging to zero as $j \to \infty$, then the $M_j$ must converge to Euclidean space (in an appropriate topology). Unfortunately, the pointed $C^k$ Cheeger--Gromov (even with $k=0$) and pointed Gromov--Hausdorff topology are insufficient (see \cites{LS,S16}, for instance). The second-named author and Sormani conjectured in \cite{LS} that this conjecture holds for the topology given by the pointed intrinsic flat distance. They proved this for the rotationally symmetric case, and subsequent work by L.-H. Huang, the second-named author, and Sormani \cite{HLS}, A. Sakovich and Sormani \cite{SakSor}, Sormani and I. Stavrov Allen \cite{SorSta}, and B. Allen \cite{All} has confirmed this conjecture (for pointed intrinsic flat convergence) in a number of other special cases. One possible program to attack the conjecture in general is to attempt to extract a pointed intrinsic flat limit $M_\infty \in \overline{\mathcal{M}}$ of the $M_j$ (upon taking a subsequence). If the total mass is lower semicontinuous, then the limit space $M_\infty$ would have nonpositive total mass (if appropriately defined). Finally, if a weak version of the positive mass theorem (together with a rigidity statement) can be obtained on $\overline{\mathcal{M}}$, it would follow that the original sequence $M_j$ converges to Euclidean space, as conjectured.

Second, we recall R. Bartnik's mass-minimization conjecture. Let $\Omega$ be a compact Riemannian 3-manifold of nonnegative scalar curvature with boundary diffeomorphic to $S^2$. Bartnik's quasi-local mass \cite{Ba1} was originally defined by taking the infimum of the ADM mass among asymptotically flat 3-manifolds $M$ of nonnegative scalar curvature in which $\Omega$ embeds isometrically, provided $M$ contains no horizons (compact minimal surfaces). (Many other variants of Bartnik's definition have appeared since; we refer the reader to \cite{Jau3} for a recent discussion.) Bartnik's mass-minimization conjecture \cite{Ba1} is that the infimum is achieved, at least under some hypotheses on $\Omega$ (see \cite{AJ}). A direct approach to this conjecture would entail taking a sequence $M_j$ of such extensions of $\Omega$ and hoping to extract a convergent subsequence in some topology, say with limit $M_\infty$, which may be a priori non-smooth. (In light of the results discussed in the previous paragraph, a natural candidate may be the pointed Sormani--Wenger intrinsic flat topology.)  In any case, to show that $M_\infty$ is indeed a Bartnik mass minimizer, one would need to know that the ADM mass (or an appropriate generalization) is lower semicontinuous when passing to a limit.

\medskip

We also mention here the observation in \cite{Jau} that the lower semicontinuity of ADM mass for pointed convergence, even in the $C^2$ case, recovers the positive mass theorem via a simple blowup example. Thus, there is a direct connection between the lower semicontinuity and the positivity of mass. We remark that the positive mass theorem enters into the proof of the main theorem below via the use of Theorem \ref{thm17}, which uses Huisken and T. Ilmanen's results on weak inverse mean curvature flow \cite{HI} (which imply the positive mass theorem).

Our main theorem is:
\begin{thm}
\label{thm1}
Let $(M_j, g_j, p_j)$  be a sequence of smooth, oriented asymptotically flat Riemannian 3-manifolds without boundary, with nonnegative scalar curvature, containing no 
compact minimal surfaces. Assume there exists a uniform positive lower bound for the isoperimetric constants of $(M_j, g_j)$. If this sequence converges in the pointed intrinsic flat volume sense to a complete pointed asymptotically flat local integral current space  $N=(X,d,g,T,q)$ of dimension 3 as $j \to \infty$, then
$$\liminf_{j \to \infty} \madm(M_j,g_j) \geq \miso(N),$$
where $\miso(N)$ is the isoperimetric mass of $N$.
\end{thm}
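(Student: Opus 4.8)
The plan is to bound $\madm(M_j,g_j)$ from below by Huisken-type isoperimetric ratios of regions inside $M_j$ (via Theorem~\ref{thm17}), to transplant a near-optimal isoperimetric region of the limit $N$ into the $M_j$ using the subregion-convergence results, and to conclude using monotonicity of the ratio. For the reduction: by Theorem~\ref{thm17}, each smooth asymptotically flat $(M_j,g_j)$ with nonnegative scalar curvature and no compact minimal surfaces satisfies $\madm(M_j,g_j)\ge\mathcal I_{g_j}(\Omega)$ for every admissible region $\Omega\subset M_j$, where
\[
\mathcal I_g(\Omega):=\frac{2}{A_g(\partial\Omega)}\Bigl(\vol_g(\Omega)-\tfrac{1}{6\sqrt{\pi}}\,A_g(\partial\Omega)^{3/2}\Bigr)
\]
and $A_g$ denotes area; equivalently, $\madm(M_j,g_j)\ge\miso(M_j,g_j)$. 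Next, fix $\epsilon>0$ and choose a bounded region $\Omega\subset X$, admissible in the definition of $\miso(N)$, with $\mathcal I_N(\Omega)>\miso(N)-\epsilon$, where $\mathcal I_N(\Omega)$ is the ratio above with $\vol_g(\Omega)$, $A_g(\partial\Omega)$ replaced by $V:=\mathbf M(T\rst\Omega)$ and $A:=\mathbf M(\partial(T\rst\Omega))$. By a harmless approximation (take $\Omega$ a sublevel set of a Lipschitz function, for instance a metric ball about $q$ of generic radius, or mollify its indicator) we may assume $\|T\|(\partial\Omega)=0$ and that $\partial\Omega$ has one-sided Minkowski content equal to $A$, i.e.\ $\frac1r\|T\|(\{0<d_X(\cdot,\Omega)<r\})\to A$ as $r\downarrow 0$.

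To transplant $\Omega$: by the pointed intrinsic flat volume convergence together with the subregion-convergence results proved above, after passing to a subsequence there are a complete metric space $Z$ and isometric embeddings $\varphi_j\colon M_j\hookrightarrow Z$, $\varphi_\infty\colon X\hookrightarrow Z$ for which, on the bounded region of interest, $\varphi_{j\#}T_j\to\varphi_{\infty\#}T$ weakly and $\|\varphi_{j\#}T_j\|\rightharpoonup\|\varphi_{\infty\#}T\|$ as measures. Let $f(z):=d_Z(z,\varphi_\infty(\Omega))$; it is $1$-Lipschitz and restricts on $\varphi_\infty(X)$ to $d_X(\cdot,\Omega)$. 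Put $\Omega_j^s:=\varphi_j^{-1}(\{f<s\})$, a bounded open subset of $M_j$. Since $\partial T_j=0$, for a.e.\ $s$ the slice $\langle T_j,f,s\rangle$ is an integral current equal up to sign to $\partial(T_j\rst\Omega_j^s)$, and the coarea inequality gives, for $0<a<b$,
\[
\int_a^b\mathbf M\bigl(\langle\varphi_{j\#}T_j,f,t\rangle\bigr)\,dt\ \le\ \|\varphi_{j\#}T_j\|\bigl(\{a<f<b\}\bigr)\ \xrightarrow[j\to\infty]{}\ \|T\|\bigl(\{a\le d_X(\cdot,\Omega)\le b\}\bigr),
\]
the limit holding for a.e.\ $a,b$ since these are $\|T\|$-continuity sets. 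Choose $\delta>0$, from the full-measure set of good levels, so small that $\|T\|(\{\tfrac\delta2<d_X(\cdot,\Omega)<\delta\})\le\tfrac\delta2(A+\epsilon)$ (possible by the Minkowski normalization). A mean-value argument over $t\in(\tfrac\delta2,\delta)$, intersected with the set of good levels, then supplies, for all large $j$, a level $t_j\in(\tfrac\delta2,\delta)$ with $\langle T_j,f,t_j\rangle$ a good slice,
\[
\mathbf M\bigl(\partial(T_j\rst\Omega_j^{t_j})\bigr)\ \le\ A+2\epsilon\qquad\text{and}\qquad \vol_{g_j}(\Omega_j^{t_j})\ \ge\ \|\varphi_{j\#}T_j\|\bigl(\{f<\tfrac\delta2\}\bigr)\ \ge\ V-\epsilon,
\]
the last inequality from volume convergence on the continuity set $\{f<\tfrac\delta2\}$, whose intersection with $\varphi_\infty(X)$ contains $\varphi_\infty(\Omega)$. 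Set $\Omega_j:=\Omega_j^{t_j}$, a bounded finite-perimeter region; filling in any bounded complementary components (which raises $\vol$ and lowers $A_{g_j}(\partial\,\cdot\,)$, hence raises $\mathcal I_{g_j}$) and, if the definition of $\miso(M_j)$ demands it, smoothing $\partial\Omega_j$ with arbitrarily small change in volume and area, we obtain an admissible $\Omega_j\subset M_j$ with $\vol_{g_j}(\Omega_j)\ge V-\epsilon$ and $A_{g_j}(\partial\Omega_j)\le A+2\epsilon$.

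Since $(V,A)\mapsto\tfrac{2V}{A}-\tfrac{\sqrt A}{3\sqrt{\pi}}$ is nondecreasing in $V$ and nonincreasing in $A$ on $\{A>0\}$, we get $\mathcal I_{g_j}(\Omega_j)\ge\tfrac{2(V-\epsilon)}{A+2\epsilon}-\tfrac{\sqrt{A+2\epsilon}}{3\sqrt{\pi}}$, whence
\[
\liminf_{j\to\infty}\madm(M_j,g_j)\ \ge\ \liminf_{j\to\infty}\mathcal I_{g_j}(\Omega_j)\ \ge\ \frac{2(V-\epsilon)}{A+2\epsilon}-\frac{\sqrt{A+2\epsilon}}{3\sqrt{\pi}}\ \xrightarrow[\epsilon\to 0]{}\ \mathcal I_N(\Omega)\ >\ \miso(N)-\epsilon.
\]
Letting $\epsilon\downarrow 0$ along a diagonal subsequence yields $\liminf_{j\to\infty}\madm(M_j,g_j)\ge\miso(N)$; the passages to subsequences are harmless since the claim concerns a liminf.

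The main obstacle is the transplanting step. Weak (flat) convergence of currents makes mass --- hence boundary area --- only \emph{lower} semicontinuous, which is the wrong direction for keeping the isoperimetric ratio large; moreover, the extrinsic images of $M_j$ and of $X$ inside $Z$ need not meet, so one cannot slice exactly along $\partial\Omega$. Both points are dealt with by slicing in a thin collar at distance $\sim\delta$ from $\varphi_\infty(\Omega)$ and selecting the slice level through the coarea inequality, which costs essentially the Minkowski content of $\partial\Omega$ in area and nothing in volume. The uniform lower bound on the isoperimetric constants of $(M_j,g_j)$ and the absence of compact minimal surfaces are what make Theorem~\ref{thm17} and the requisite control on the limit $N$ available; a secondary technical point is verifying that near-optimal regions for $\miso(N)$ can be chosen with the Minkowski-content regularity used above.
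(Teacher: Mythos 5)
Your overall scheme (transplant a near-optimal region of $N$ into $M_j$ by slicing in a thin collar, then feed it into Theorem \ref{thm17}) is the right one, and your idea of slicing the distance function $f(z)=d_Z(z,\varphi_\infty(\Omega))$ at a small level is a reasonable variant of the paper's construction. But there are two genuine gaps. First, your reduction step misstates Theorem \ref{thm17}: it does \emph{not} say $\madm(M_j,g_j)\ge \mathcal I_{g_j}(\Omega)$ for every admissible region. It gives $\miso(\Omega)\le \madm(M_j,g_j)+C/\sqrt{|\partial\Omega|}$, and only for $\Omega$ that is \emph{outward-minimizing} with $C^{1,1}$ boundary, with $|\partial\Omega|\ge 36\pi\mu_0^2$, bounded isoperimetric ratio, and an isoperimetric constant lower bound. (Nor does the global identity $\madm=\miso$ help: $\miso(M_j)$ is defined via a $\limsup$ over exhaustions, so a single bounded region does not bound it from below.) Consequently your final chain of inequalities, which has no error term and no largeness requirement on the boundary area, does not go through as written. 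To repair it you must replace the transplanted region by its outermost minimizing hull (which only increases volume and decreases perimeter, hence increases the quasi-local mass), verify an isoperimetric-ratio upper bound, and arrange $|\partial\Omega_j|$ to be large enough both to meet the $36\pi\mu_0^2$ threshold and to make $C/\sqrt{|\partial\Omega_j|}$ small --- this is precisely where the uniform lower bound on isoperimetric constants and a choice of $\Omega$ with large volume enter in the paper's proof; none of this appears in your argument.

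Second, the ``harmless approximation'' giving a near-optimal $\Omega$ whose one-sided Minkowski content equals $A=\M(\partial(T\rst\Omega))$ is not harmless; it is the technical heart of the problem. Your area bound rests on $\|T\|(\{\tfrac\delta2<d(\cdot,\Omega)<\delta\})\le\tfrac\delta2(A+\epsilon)$, i.e.\ an \emph{upper} bound on collar volume in terms of perimeter. The slicing/coarea inequality only gives the opposite direction (slice mass is dominated by collar volume), so ``a metric ball about $q$ of generic radius'' does not supply this, and ``mollifying the indicator'' has no meaning on the non-manifold part of $X$; moreover a generic ball need not be nearly optimal for $\miso(N)$. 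What makes such an estimate available is exactly the structure the paper exploits: $\partial\Omega$ can be taken to be a smooth hypersurface in the $C^0$ AF end, where $\|T\|$ coincides with $\mu_g$ (Lemma \ref{lemma_mass_measure}(a)), boundary mass coincides with perimeter (Lemma \ref{lemma_perimeter_bdry_mass}), and one can build a defining function $u$ with $(1+\gamma)^{-1}\le|du|_g\le 1+\gamma$ via smooth approximating metrics (Lemma \ref{lemma_u}); only then does the collar volume compare to $\delta\,A$ up to $(1+\gamma)$-factors, which is the content of Proposition \ref{prop_perimeters_converge2} (and is why the paper obtains only approximate, not exact, convergence of perimeters). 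If you incorporate this machinery, your distance-function slicing does have the pleasant feature of avoiding the paper's extra radius truncation $r$ (your sublevel sets of $f$ are automatically confined to a small neighborhood of $\varphi_\infty(\Omega)$, away from $\partial B(p_j,R)$ where $\partial(T_j\rst B(p_j,R))$ lives), but you would still need to make the weak-$*$ convergence of mass measures and the upper semicontinuity on the closed collar precise (the paper does this through Lemma \ref{lemma_volumes_general}), and you must work with the truncated currents $T_j\rst B(p_j,R)$ that pointed $\VF$ convergence actually provides, not with the full infinite-mass currents of $M_j$.
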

The relevant definitions are given in sections \ref{sec_background1} and \ref{sec_background2}.

We regard Theorem \ref{thm1} as additional new evidence that $\F$-convergence interacts well with nonnegative scalar curvature and general relativistic mass.

\medskip

Since much of the paper is technical, we describe here the outline of the proof of Theorem \ref{thm1} to explain why the technical details are needed. Suppose we have a sequence $(M_j, g_j, p_j)$ as above, converging to a limit space $N$ (to be interpreted vaguely for now). By its definition the quantity  $\miso(N)$ will be approximated by $\miso(E)$, where $E$ is a large compact set in the limit space, and the ``quasi-local'' isoperimetric mass of $E$ is defined by
$$\miso(E) = \frac{2}{A} \left( V - \frac{1}{6\sqrt{\pi}} A^{3/2}\right),$$
where $A$ is the boundary area (perimeter) of $E$ and $V$ is the volume of $E$. (To establish some intuition, note that if $E \subset \R^3$, then $\miso(E) \leq 0$ by the isoperimetric inequality, equaling zero if $E$ is a ball. Also if $E$ is a ball in the Schwarzschild manifold of mass $m>0$, then $\miso(E)$ limits to $m$ as the radius of the ball limits to infinity.)  The next step is to construct compact sets $E_j \subset M_j$ that correspond to $E$ in a meaningful way. This is immediate for pointed Cheeger--Gromov convergence, which comes equipped with embeddings from the limit space into $M_j$; for intrinsic flat convergence, however, there are no such maps, through there are maps from $X$ and $M_j$ into a common metric space by a theorem of Sormani and Wenger. Using these maps and properties of Lipschitz functions, we construct $E_j$. To proceed, we want it to be the case that $\miso(E_j)$ approximates $\miso(E)$ for large $j$, which is guaranteed if the boundary area $A_j$ and volume $V_j$ of $E_j$ are close to $A$ and $V$. Assume this is true for a moment (although see the next paragraph). It would be convenient if the quasi-local mass $\miso(E_j)$ gave a lower bound on the ADM mass of $M_j$; \cite[Theorem 17]{JL} (see Theorem \ref{thm17} herein) nearly does so, instead giving:
$$\miso(E_j) \leq m_{ADM}(M_j, g_j) + \frac{C}{\sqrt{A_j}}.$$
Here, $C$ is a constant that we can control independently of $j$, and we can pre-arrange that $A$ and hence $A_j$ are large enough to make $\frac{C}{\sqrt{A_j}}$ arbitrarily small. Then, for $j$ large,
$$\miso(N) \approx \miso(E) \approx \miso(E_j) \lessapprox m_{ADM}(M_j, g_j),$$
which would imply the result.

The weakest link in the above sketch is that $A_j \approx A$.  Since we assume intrinsic flat \emph{volume} convergence, we will be able to show that $V_j \approx V$. Unfortunately, under this mode of convergence, the boundary areas will only be lower semicontinuous, i.e. can jump strictly down in a limit. Such behavior would cause $\miso(E_j)$ to jump \emph{up} in a limit, which unfortunately would be incompatible with the argument above. The main technical work of this paper is to construct the $E_j$ carefully so as to obtain convergence of the boundary areas of $E_j$ to that of $E$. This involves a double perturbation argument; $E$ and $E_j$ will be viewed as sub-level sets of Lipschitz functions, and it will be necessary to vary both the level set value and also a cut-off radius to prevent $E_j$ from extending too far out into $M_j$. We also require a precise definition of the possible limit space; Definition \ref{def_AF_LICS} allows for a rough (metric space) limit, provided it is a $C^0$ Riemannian manifold outside a compact set.

\begin{outline}
Section \ref{sec_background1} includes the relevant background for the paper on $C^0$ asymptotically flat manifolds, Huisken's isoperimetric mass, and precise statements of prior results. Section \ref{sec_background2} recalls the important details of Ambrosio--Kirchheim  integral currents, integral current spaces, and the Sormani--Wenger intrinsic flat distance. Section \ref{sec_local_ICS} includes a new definition, that of an asymptotically flat local integral current space. Some concepts defined on integral current spaces (e.g., current mass) must be reconciled with their Riemannian counterparts (e.g., volume) --- this is carried out in section \ref{sec_measures}. The main technical work of the paper is in section \ref{sec_regions}, which includes the construction of the regions $E_j$ mentioned above and proofs of the convergence of their volumes and perimeters. Theorem \ref{thm1} is finally proved in section \ref{sec_main_proof}; a discussion of the hypotheses in Theorem \ref{thm1} follows the proof. The appendix includes a discussion of how perimeter is defined in both the smooth and the $C^0$ setting and proves some needed results.
\end{outline}

\begin{ack}
The authors would like to thank Christina Sormani for valuable discussions and support. 
JJ acknowledges support from Union College's Faculty Research Fund.
\end{ack}

\section{Background I: $C^0$ asymptotically flat manifolds; Huisken's isoperimetric mass; prior results}
\label{sec_background1}

\subsection{$C^0$ Riemannian manifolds and asymptotic flatness}

\begin{definition}
For an integer $k \geq 0$, a \emph{$C^k$ Riemannian manifold} is a smooth manifold $M$ (possibly with boundary), equipped with a $C^k$ Riemannian metric $g$.
\end{definition}

Any connected $C^k$ Riemannian manifold $(M,g)$ naturally has a distance function $d_g$ that induces the manifold topology; $d_g(x,y)$ is found by taking the infimum of the lengths of piecewise smooth curves joining $x$ to $y$. This is of course well-known for $k \geq 2$; for $k \geq 0$, see the work of A. Burtscher \cite{Bur}, who also shows that absolutely continuous curves may be used instead without altering the definition. We say $(M,g)$ is complete if the metric space $(M, d_g)$ is complete. We let $B_g(p,r)$ denote an open ball in $M$ with respect to $d_g$. 

On a $C^k$ Riemannian manifold $M$, $k \geq 0$, of dimension $n$, there are well-defined notions of Lebesgue measure and Hausdorff measure. For the former, the volume of a measurable set can be computed locally by integrating $\sqrt{\det(g_{ij})}dx^1 \ldots dx^n$ in a coordinate chart. Moreover, there is a notion of the perimeter of a set; we recall this in the appendix. For now, we simply recall that if $E \subset M$ has smooth boundary, then its perimeter equals the Hausdorff $(n-1)$-measure of its boundary.  We will denote the volume and perimeter of a set $E \subset M$ with respect to $g$ by, respectively, $|E|_g$ and $|\partial^*E|_g$, or $|E|$ and $|\partial^* E|$ if the metric is understood.

Next we recall a natural notion of convergence for sequences of $C^k$ Riemannian manifolds.
\begin{definition}
\label{def:CG}
Fix an integer $k \geq 0$. A sequence of complete, connected pointed $C^k$ Riemannian $n$-manifolds $(M_j,g_j,p_j)$  converges  \emph{in the $C^k$ pointed Cheeger--Gromov sense} to a complete pointed $C^k$ Riemannian $m$-manifold $(N,h,q)$ if, given any $R> 0$, there exists an open set $U$ in $N$ containing the ball $B_h(q,R)$ and smooth embeddings $\Phi_j : U \longrightarrow M_j$ (for all $j$ sufficiently large) whose image contains the ball $B_{g_j}(p_j,R)$ in $M_j$, such that the sequence of tensors $\Phi_j^* g_j$ converges in $C^k$ to $h$ on $U$, as $j \to \infty$.
\end{definition}

We now define asymptotic flatness in both the continuous  and smooth cases.

\begin{definition} A \emph{$C^0$ asymptotically flat (AF) end} is a $C^0$ Riemannian $m$-manifold $(M,g)$ with boundary, where $n \geq 3$, for which there exists a diffeomorphism $\Phi$ from $M$ to $\R^m$ minus an open ball, such that in the coordinates $(x^i)$ determined by $\Phi$,
\begin{equation}
\label{eqn_decay}
|g_{ij} - \delta_{ij}| = O(|x|^{-p}),
\end{equation}
for some constant $p > 0$, where $|x| = \sqrt{(x^1)^2 + \ldots + (x^n)^2}$.
 A \emph{$C^0$ asymptotically flat (AF) manifold} is a connected $C^0$ Riemannian manifold for which there exists a compact set $K$, for which the closure of $M \setminus K$ is a $C^0$ AF end.
\end{definition}

Smooth AF ends and smooth AF manifolds are defined as above, with the additional requirements that $g$ be smooth; that \eqref{eqn_decay} holds with $p={n-2}$; further decay of the partial derivatives
\begin{align*}
\partial_k g_{ij}&=O(|x|^{1-n})\\
\partial_k \partial_l g_{ij}&=O(|x|^{-n})
\end{align*}
holds;  and the scalar curvature of $g$ is integrable.

\subsection{ADM mass and Huisken's isoperimetric mass}

For smooth AF ends of dimension $m$, the ADM mass \cite{ADM} is well-defined  \cites{Bar,Chr} by the formula
$$m_{ADM}(M,g) = \frac{1}{2(n-1)\omega_{n-1}} \lim_{r \to \infty} \int_{S_r} \sum_{i,j=1}^n\left( \partial_ig_{ij} - \partial_j g_{ii}\right)\frac{x^j}{r} dA ,$$
where $dA$ is the induced volume form on the coordinate sphere $S_r=\{|x|=r\}$ with respect to the Euclidean metric $\delta_{ij}$, all in a coordinate chart for which the appropriate decay holds. This real number represents the total mass ``seen'' from the AF end.  For $C^0$ AF ends, the ADM mass need not be defined at all. As in \cite{JL}, we use G. Huisken's isoperimetric mass concept \cites{Huisken:Morse, Huisken:2006} to endow $C^0$ AF ends with a notion of total mass. \emph{We now restrict to dimension three.}

\begin{definition}
\label{def:isop}
Let $(M,g)$ be a $C^0$ Riemannian 3-manifold and $\Omega \subset M$ a bounded open subset of finite perimeter (see the appendix). The \emph{isoperimetric ratio} of $\Omega$ is
\[I(\Omega) = I(\Omega,g ) = \frac{|\partial^* \Omega|_g^{3/2}}{|\Omega|_g}.\]
The \emph{isoperimetric constant} of $(M,g)$ is the infimum of the isoperimetric ratios of all such sets $\Omega$, denoted $c(M,g)$.
\end{definition}

In \cite[Lemma 8]{JL} it is shown that $c(M,g)>0$ for a $C^0$ AF manifold. Note that by the classical isoperimetric inequality, $I(\Omega) \geq 6\sqrt{\pi}$ for all bounded open subsets of finite perimeter in Euclidean 3-space, with equality precisely on balls.

\begin{definition}[\cites{Huisken:Morse, Huisken:2006}]
\label{def:isop_mass}
Huisken's \emph{quasilocal isoperimetric mass} of $\Omega$ is 
\[ \miso(\Omega)=\miso(\Omega,g):=   \frac{2}{|\partial^* \Omega|_g}\left(|\Omega|_g - \frac{1}{6\sqrt{\pi}}|\partial^* \Omega|_g^{3/2} \right).\] 
If $(M,g)$ is a $C^0$ AF end (or more generally a $C^0$ AF manifold), then Huisken's  \emph{isoperimetric mass} of $(M,g)$ is defined by
\[\miso(M,g) = \sup_{\{\Omega_i\}_{i=1}^\infty} \left(\limsup_{i \to \infty} \miso(\Omega_i,g)\right),\]
where the supremum is taken over all exhaustions $\{\Omega_i\}_{i=1}^\infty$ of $M$ by bounded open subsets of finite perimeter containing $\partial M$.
\end{definition}
Note that $\miso(M,g)$ takes values in $[-\infty,\infty]$ and is independent of any choice of coordinates.

That this is a good definition follows from the fact that $\miso(M,g)$ equals $\madm(M,g)$ if $g$ is smooth with nonnegative scalar curvature. Huisken announced this equality when he first introduced his isoperimetric mass concept (see, e.g., \cite{Huisken:Morse}). P.\ Miao observed 
that the volume estimates of X.-Q.\ Fan, Y.\ Shi, and L.-F.\ Tam \cite{Fan-Shi-Tam:2009} imply $\miso(M,g)\ge \madm(M,g)$. More recently, the reverse inequality follows from \cite[Theorem 17]{JL} (which is essentially restated as Theorem \ref{thm17} below) or the work of O. Chodosh, M. Eichmair, Y. Shi, and H. Yu \cite{Chodosh-Eichmair-Shi-Yu:2016}.

For later reference, we recall that a bounded open set of finite perimeter $\Omega \subset M$ (where $(M,g)$ is a $C^0$ AF manifold) is \emph{outward-minimizing} if $|\partial^*\Omega| \leq |\partial^*\Omega'|$ for all bounded open sets of finite perimeter $\Omega' \supset \Omega$ in $M$.

\subsection{Prior results on the lower semicontinuity of mass}
In prior work the authors proved:

\begin{thm}[Lower semicontinuity of mass under $C^0$ convergence \cite{JL}]
\label{thm_JL}
Let $(M_j, g_j, p_j)$ be a sequence of pointed smooth asymptotically flat 3-manifolds whose boundaries are empty or minimal, such that each 
$(M_j ,g_j)$ has nonnegative scalar curvature and contains no compact minimal surfaces in its interior. If $(M_j, g_j, p_j)$ converges in the pointed $C^0$ Cheeger--Gromov sense to a pointed $C^0$ asymptotically flat 3-manifold $(N, h, q)$, then 
\[ \liminf_{j\to\infty} \madm(M_j, g_j)\geq \miso(N, h),\]
where $\miso$ is Huisken's isoperimetric mass.
\end{thm}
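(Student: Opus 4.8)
The plan is to reduce the statement to the quasi-local estimate, Theorem \ref{thm17}, by transporting large regions of $N$ into the $M_j$ along the Cheeger--Gromov embeddings. The feature that makes the $C^0$ setting tractable --- and which is exactly what fails for intrinsic flat convergence --- is that uniform convergence of the metrics lets both the volume \emph{and} the perimeter of a fixed region pass to the limit with no loss.

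\emph{Reduction and transport.} First I would set $L:=\liminf_j\madm(M_j,g_j)$, assume $L<\infty$, and pass to a subsequence with $\madm(M_j,g_j)\to L$. By Definition \ref{def:isop_mass} it suffices to fix an arbitrary exhaustion $\{\Omega_i\}_{i=1}^\infty$ of $N$ by bounded open sets of finite perimeter containing $\partial N$ and prove $\limsup_i\miso(\Omega_i,h)\le L$. Each $\Omega_i$ is nonempty and bounded while $N$ is noncompact of infinite volume, so $0<|\partial^*\Omega_i|_h<\infty$ and $|\Omega_i|_h\to\infty$; since $c(N,h)>0$ by \cite[Lemma 8]{JL} and $|\partial^*\Omega_i|_h^{3/2}\ge c(N,h)|\Omega_i|_h$, we get $|\partial^*\Omega_i|_h\to\infty$. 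Now fix $i$, choose $R$ with $\overline{\Omega_i}\subset B_h(q,R)$, and use the $C^0$ Cheeger--Gromov convergence to obtain, for all large $j$, smooth embeddings $\Phi_j\colon U\to M_j$ with $U\supset\overline{\Omega_i}$ and $\Phi_j^*g_j\to h$ in $C^0$. Put $E_j:=\Phi_j(\Omega_i)\subset M_j$, a bounded open set of finite perimeter, which one arranges to contain $\partial M_j$. Since $\Phi_j$ is a diffeomorphism onto its image and $\Phi_j^*g_j\to h$ uniformly on $\overline{\Omega_i}$, the volume density $\sqrt{\det(\Phi_j^*g_j)}$ and the $2$-dimensional area density of $\Phi_j^*g_j$ along the rectifiable set $\partial^*\Omega_i$ converge uniformly to those of $h$; hence $|E_j|_{g_j}\to|\Omega_i|_h$ and $|\partial^*E_j|_{g_j}\to|\partial^*\Omega_i|_h$. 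As $|\partial^*\Omega_i|_h>0$, Huisken's mass is continuous in these two quantities, so $\miso(E_j,g_j)\to\miso(\Omega_i,h)$.

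\emph{Applying the quasi-local estimate and passing to the limit.} Each $(M_j,g_j)$ is a smooth AF $3$-manifold with nonnegative scalar curvature, empty or minimal boundary, and no interior compact minimal surfaces, so Theorem \ref{thm17} applies to $E_j$ and yields $\miso(E_j,g_j)\le\madm(M_j,g_j)+C_j/\sqrt{|\partial^*E_j|_{g_j}}$ (if the theorem requires an outward-minimizing region, first pass to the outward-minimizing hull of $E_j$, which only increases $\miso$). One then argues that $C_j$ may be taken uniform along the sequence: it depends only on the scalar-curvature lower bound (here $0$) and on geometric data that the $C^0$ convergence and the bound $\madm(M_j,g_j)\to L$ keep under control. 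Letting $j\to\infty$ with $i$ fixed gives $\miso(\Omega_i,h)=\lim_j\miso(E_j,g_j)\le L+C/\sqrt{|\partial^*\Omega_i|_h}$. Taking $\limsup_i$ and using $|\partial^*\Omega_i|_h\to\infty$ gives $\limsup_i\miso(\Omega_i,h)\le L$, and the supremum over exhaustions then yields $\miso(N,h)\le L$, as desired.

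\emph{Main obstacle.} The substantive input is Theorem \ref{thm17} itself --- the bound $\miso(E)\le\madm(M,g)+C/\sqrt{|\partial^*E|}$ on a smooth manifold of nonnegative scalar curvature --- whose proof rests on Huisken--Ilmanen's weak inverse mean curvature flow and hence already contains the positive mass theorem; I would treat it as a black box. Granting it, the two points in the deduction that genuinely need care are: (i) the simultaneous convergence of volume \emph{and} perimeter in the transport step, where it is crucial that the perimeter does not drop --- this is why one transports the fixed rectifiable set $\partial^*\Omega_i$ by honest diffeomorphisms rather than constructing competitor regions inside the $M_j$; and (ii) the uniformity in $j$ of the constant $C_j$, which must be extracted from the $C^0$ convergence together with the convergence of the ADM masses, and which here comes for free from the convergence (unlike in the intrinsic flat version of Theorem \ref{thm1}, where the uniform isoperimetric-constant bound is imposed as a hypothesis).
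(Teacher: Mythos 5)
The paper itself does not prove Theorem~\ref{thm_JL}: it is recalled verbatim from the earlier paper \cite{JL} and is not re-proved here, so there is no ``paper's own proof'' to compare against directly. What the paper does supply is (i) the explicit remark in the introduction that, for Cheeger--Gromov convergence, the regions $E_j$ are constructed ``immediately'' from the embeddings, and (ii) the proof of Theorem~\ref{thm1}, which is the intrinsic-flat generalization of Theorem~\ref{thm_JL} and uses the same black box, Theorem~\ref{thm17}. Measured against those, your overall strategy --- fix an exhaustion, push $\Omega_i$ forward by $\Phi_j$, exploit that uniform $C^0$ convergence of the metrics makes both volume and perimeter of a fixed set converge, pass to the outward-minimizing hull, apply Theorem~\ref{thm17}, and then kill the error term using $|\partial^*\Omega_i|_h\to\infty$ --- is the right skeleton and does match what one can reconstruct of the \cite{JL} argument.

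That said, there are two genuine gaps in the execution, both concentrated in step (ii), which you correctly flag as the delicate point but then wave away.

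First, Theorem~\ref{thm17} as stated requires $(M,g)$ to have \emph{no boundary}, whereas Theorem~\ref{thm_JL} allows $\partial M_j$ to be a nonempty minimal surface. You cannot invoke Theorem~\ref{thm17} directly; some reduction (e.g.\ filling in or doubling across the minimal boundary, or citing the boundary version actually proved in \cite{JL}) is needed and is not supplied.

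Second, your claim that the uniformity of $C_j$ ``comes for free from the convergence'' is unjustified, and your description of what $C$ depends on is off: in Theorem~\ref{thm17}, $C=C(\mu_0,I_0,c_0)$ depends on an upper bound for the isoperimetric ratio and --- crucially --- a \emph{lower} bound $c_0$ for the isoperimetric constant $c(\tilde E_j,g_j)$; the sign of the scalar curvature is a hypothesis, not a parameter. Pointed $C^0$ Cheeger--Gromov convergence controls $g_j$ only on $\Phi_j(U)$, i.e.\ on balls of any fixed (but finite) radius about $p_j$, and the outermost minimizing hull $\tilde E_j$ of $E_j$ is not a priori confined to that controlled region. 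Without a $j$-uniform diameter (or enclosure) bound for $\tilde E_j$, you cannot conclude that $c(\tilde E_j,g_j)$ is uniformly bounded below by transporting the bound $c(N,h)>0$ through the embeddings. Indeed, in Theorem~\ref{thm1} the authors \emph{impose} a uniform lower bound on $c(M_j,g_j)$ precisely because no such bound falls out of the convergence; you should either supply the missing containment/diameter estimate for $\tilde E_j$ (so that the $C^0$ closeness of $g_j$ to $h$ actually applies to it), or observe --- as the paper's closing remark does --- that $C$ only depends on the isoperimetric constant restricted to regions of perimeter at most $36\pi(\mu+1)^2$ and argue from there. A minor third point: to have a single admissible $I_0$ in Theorem~\ref{thm17} you should first dispose of exhaustions whose isoperimetric ratios blow up, but that case is harmless since then $\miso(\Omega_i,h)\to-\infty$ automatically.

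As a presentational note, the approach is sound and correctly identifies Theorem~\ref{thm17} as the essential input, but the sentence ``comes for free'' should be replaced by the actual argument controlling $c(\tilde E_j,g_j)$ and the boundary case must be addressed.
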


Theorem \ref{thm_JL} was preceded by a result of the first-named author in \cite{Jau} that such lower semicontinuity of the ADM mass holds for pointed $C^2$ Cheeger--Gromov convergence, where the limit space $(N,h)$ is a smooth AF manifold, and $\miso(N,h)$ is replaced with $\madm(N,h)$. (Note, however, that the case of a nonempty (minimal) boundary was not addressed in \cite{Jau}.) It was also shown that the hypotheses of nonnegative scalar curvature and no minimal surfaces are necessary. Later, the higher dimensional case, up to $n=7$, was proved in \cite{Jau2}, provided the limit is asymptotically Schwarzschild.

In \cite[Theorem 12]{Jau}, it was also shown that the ADM mass is lower semicontinuous on the space of smooth, rotationally symmetric $n$-manifolds with respect to a type of pointed intrinsic flat convergence that assumed uniformly bounded ``depth.'' (Volume convergence was not assumed, but in rotational symmetry with a depth bound, it follows by \cite[Theorem 8.1]{LefSor}.) Therein it was conjectured that lower semicontinuity of the ADM mass with respect to pointed intrinsic flat converge ought to hold; Theorem \ref{thm1} may be regarded as significant progress on establishing this.

We do not make direct use of Theorem \ref{thm_JL}; instead we will use the following result, proven in \cite{JL}, which shows the quasilocal isoperimetric mass gives a lower bound for the ADM mass, modulo a controlled error term.

\begin{thm} \label{thm17}  \cite{JL}
Given constants $\mu_0 > 0$, $I_0 > 0$, $c_0>0$, there exists a constant $C=C(\mu_0, I_0,c_0)>0$ with the following property. Let $(M,g)$ be a smooth asymptotically flat 3-manifold without boundary, with nonnegative scalar curvature and containing no compact minimal surfaces, with $\madm(M,g) \leq \mu_0$. Let $\Omega$ be an outward-minimizing bounded open set in $M$ with $C^{1,1}$ boundary $\partial \Omega$. Assume that $|\partial \Omega| \geq 36\pi \mu_0^2$, the isoperimetric ratio of $\Omega$ is at most $I_0$, and the isoperimetric constant $c(\Omega,g)$ is at least $c_0$. Then \[ \miso(\Omega) \leq \madm(M,g) + \frac{C}{\sqrt{|\partial \Omega|}}. \]
\end{thm}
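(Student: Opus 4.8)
The plan is to run the weak inverse mean curvature flow (IMCF) of Huisken--Ilmanen \cite{HI} starting from $\partial\Omega$ and to monitor how the quasilocal isoperimetric mass of the enclosed regions evolves. First reduce to the case $\madm(M,g) > 0$: if $\madm(M,g) = 0$, the positive mass theorem (which follows from \cite{HI}) forces $(M,g)$ to be isometric to $\R^3$, and then the classical isoperimetric inequality gives $\miso(\Omega,g) \le 0$, so the conclusion holds with $C = 0$. Assume henceforth $\madm(M,g) > 0$. Since $\partial\Omega$ is $C^{1,1}$ and outward-minimizing and $(M,g)$ has no compact minimal surfaces, the weak IMCF $\{N_t\}_{t \ge 0}$ with $N_0 = \partial\Omega$ exists, is proper, has no initial jump, and exhausts $M$; let $\Omega_t \supseteq \Omega$ be the enclosed region ($\Omega_0 = \Omega$), with $A_t := |N_t| = |\partial\Omega|\,e^t$ (the exponential area growth, preserved through jumps) and $V_t := |\Omega_t|$. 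By the coarea formula, $\frac{d}{dt}V_t = \int_{N_t} H^{-1}$ on the smooth part of the flow, while at a jump time $V_t$ increases, which only helps.

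The key step is the monotonicity-type estimate
\[ \frac{d}{dt}\,\miso(\Omega_t,g) \;\ge\; \mh(N_t) - \miso(\Omega_t,g), \]
where $\mh$ denotes the Hawking mass. To obtain it: apply the Cauchy--Schwarz inequality twice to get $\int_{N_t} H^{-1} \ge A_t^{3/2}\bigl(\int_{N_t} H^2\bigr)^{-1/2}$; substitute the identity $\int_{N_t} H^2 = 16\pi\bigl(1 - 4\sqrt\pi\,\mh(N_t)/\sqrt{A_t}\bigr)$ coming from the definition of $\mh$; use Geroch--Huisken--Ilmanen monotonicity $\frac{d}{dt}\mh(N_t)\ge 0$ together with the asymptotic bound $\mh(N_t) \le \madm(M,g)$ from \cite{HI} — which, combined with $|\partial\Omega| \ge 36\pi\mu_0^2$, guarantees $4\sqrt\pi\,\mh(N_t)/\sqrt{A_t} \le \tfrac{2}{3} < 1$; and apply the tangent-line bound $(1-x)^{-1/2} \ge 1 + \tfrac{x}{2}$ to deduce $\int_{N_t} H^{-1} \ge \tfrac{1}{4\sqrt\pi}A_t^{3/2} + \tfrac12 A_t\,\mh(N_t)$. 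Feeding this, together with $\frac{d}{dt}A_t = A_t$, into the derivative of $\miso(\Omega_t,g) = \tfrac{2V_t}{A_t} - \tfrac{1}{3\sqrt\pi}A_t^{1/2}$ and simplifying yields the displayed inequality; the upward jumps of $\miso(\Omega_t,g)$ at jump times are consistent with it.

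This one-sided inequality is not by itself enough to bound $\miso(\Omega,g) = \miso(\Omega_0,g)$ from above; one also needs a matching quantitative upper bound on $\miso(\Omega_t,g)$ — equivalently on the enclosed volumes $V_t$ — valid for all $t$, established directly from the asymptotic roundness of the weak IMCF surfaces in the asymptotically flat end (so that $\miso(\Omega_t,g)$ and $\mh(N_t)$ both tend to $\madm(M,g)$ with controlled rate). This is where the hypotheses $I(\Omega) \le I_0$ and $c(\Omega,g) \ge c_0$ enter: they pin down the initial region — in particular $V_0$, hence the starting value $\miso(\Omega_0,g)$ — and control the flow for small $t$, where $\partial\Omega$ has only $C^{1,1}$ regularity so that $\int_{\partial\Omega} H^2$, and hence $\mh(\partial\Omega)$, is a priori uncontrolled; it is through these inputs that $C$ acquires its dependence on $I_0$ and $c_0$. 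Integrating the differential inequality against the factor $e^t$ over a suitably chosen time interval, and exploiting the exponential growth $A_t = |\partial\Omega|\,e^t$ so that the accumulated error is of order $|\partial\Omega|^{-1/2}$, then yields $\miso(\Omega,g) \le \madm(M,g) + C(\mu_0,I_0,c_0)\,|\partial\Omega|^{-1/2}$.

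The main obstacle is this last part: carrying out, and crucially making uniform in $\mu_0$, $I_0$, and $c_0$, the quantitative asymptotic analysis of weak IMCF in the asymptotically flat end, and balancing it against the exponential weight $e^t$ in the Gronwall-type integration so as to land exactly the $O(|\partial\Omega|^{-1/2})$ error — all while rigorously handling the jumps of the weak flow and the low ($C^{1,1}$) regularity of $\partial\Omega$. The differential inequality in the second step is clean and robust; converting it into the stated bound with a constant depending only on $(\mu_0, I_0, c_0)$ is where the real work lies.
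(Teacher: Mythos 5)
This statement is not proved in the paper at all---it is imported from \cite{JL}---so the comparison is with the IMCF argument given there. The first half of your proposal is correct and is the right point of departure: weak IMCF from $\partial\Omega$ exists under the stated hypotheses, $|N_t|=|\partial\Omega|e^t$, Geroch monotonicity gives $\mh(N_t)\le \madm(M,g)$ for all $t$, and the Cauchy--Schwarz plus tangent-line computation (with $|\partial\Omega|\ge 36\pi\mu_0^2$ ensuring $4\sqrt{\pi}\,\mh(N_t)/\sqrt{A_t}\le 2/3<1$) does yield $\frac{d}{dt}\miso(\Omega_t,g)\ge \mh(N_t)-\miso(\Omega_t,g)$ on smooth intervals, with jumps only pushing $\miso(\Omega_t,g)$ up. The problem is that everything after this is a statement of intent rather than a proof, and you say so yourself (``where the real work lies''). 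That missing part is the actual content of the theorem: producing the error term $C/\sqrt{|\partial\Omega|}$ with $C$ depending only on $(\mu_0,I_0,c_0)$.

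Moreover, the route you sketch for that part has concrete obstructions. Integrating $\frac{d}{dt}\bigl(e^t\miso(\Omega_t,g)\bigr)\ge e^t\mh(N_t)$ from $t=0$ requires a lower bound on $\mh(N_t)$, and the only one monotonicity supplies is $\mh(\partial\Omega)$, which the hypotheses do not control: $I_0$ and $c_0$ are purely isoperimetric and say nothing about $\int_{\partial\Omega}H^2$ (already in $\R^3$, a convex---hence outward-minimizing---domain with isoperimetric ratio near that of a ball and good isoperimetric constant can have arbitrarily large Willmore energy, hence arbitrarily negative initial Hawking mass); your suggestion that $I_0,c_0$ ``control the flow for small $t$'' has no mechanism behind it. Second, the ``asymptotic roundness with controlled rate'' you invoke at large $t$ would inevitably bring in the asymptotic decay constants of $(M,g)$, which are not among the permitted parameters; since the errors at time $T$ are amplified by $e^T$ in your weighted integration, a rate-dependent argument would make $C$ depend on the particular manifold, contradicting the statement. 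The end can only be used through soft limiting facts (such as $\lim_{t\to\infty}\mh(N_t)\le\madm(M,g)$ or exact Fan--Shi--Tam-type limits), not through convergence rates. Third, $c_0$ never actually enters an estimate in your outline, whereas in \cite{JL} the isoperimetric constant is used in a specific, localized way (only for regions of small boundary area; cf.\ the remark following the proof of Theorem \ref{thm1}), so the claimed dependence $C(\mu_0,I_0,c_0)$ is unaccounted for. In short: the differential inequality is right, but the conversion of it into $\miso(\Omega)\le \madm(M,g)+C/\sqrt{|\partial\Omega|}$ is missing, and the plan you outline for it would not work as stated.
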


\section{Background II: Ambrosio--Kirchheim currents and the Sormani--Wenger intrinsic flat distance}
\label{sec_background2}

We will heavily use Ambrosio--Kirchheim's notion of currents on metric spaces \cite{AK}, as well as Sormani--Wenger's notions of integral current spaces and of intrinsic flat convergence \cite{SW}. We recall a number of relevant definitions and results now, although we also refer readers to the excellent survey of the subject in \cite{S16}. This section is independent of the previous until Definition \ref{def_AF_LICS}.

Throughout this section, $(X,d)$ will denote a  metric space that is assumed to be complete, until indicated otherwise.

\subsection{Ambrosio--Kirchheim currents on metric spaces}
A current on $(X,d)$ (or simply a current on $X$), as defined by Ambrosio--Kirchheim, is, roughly, a multilinear functional acting on tuples of Lipschitz functions $X \to \R$, obeying the properties of continuity, locality, and finite mass. To make this precise, for an integer $m \geq 0$, let $ \mathcal{D}^m$ denote the vector space of $(m+1)$-tuples $(f, \pi_1, \ldots, \pi_m)$, where $f:X \to \R$ is Lipschitz and bounded, and each $\pi_i:X \to \R$ is Lipschitz for $i=1,\ldots, m$. The motivation is to view this $(m+1)$-tuple as the differential $m$-form $f d\pi_1 \wedge \ldots \wedge d\pi_m$.

\begin{definition} \cite{AK}
\label{def_current}
For an integer $m \geq 0$, an \emph{$m$-dimensional current} (or \emph{$m$-current}) on $(X,d)$ is a multilinear function $T: \mathcal{D}^m  \to \R$, obeying:
\begin{enumerate}[(i)]
\item (\emph{continuity}) $T(f,\pi_1, \ldots, \pi_m) = \displaystyle \lim_{j \to \infty} T(f,\pi_1^j, \ldots, \pi_m^j)$ whenever $\pi_i^j \to \pi_i$ as $j \to \infty$ pointwise with uniformly bounded Lipschitz constants.
\item (\emph{locality}) $T(f, \pi_1 \ldots, \pi_m)=0$ if any of the $\pi_i$ is constant on a neighborhood of $\{f\neq 0\}$.
\item (\emph{finite mass}) There exists a finite Borel measure $\mu$ on $X$ such that
\begin{equation}
\label{eqn_mass_T}
|T(f, \pi_1, \ldots, \pi_m)| \leq  \int_X |f| d\mu
\end{equation}
whenever $\Lip(\pi_i)\leq 1$ for all $i=1,\ldots,m$.
\end{enumerate}
\end{definition}

The \emph{mass measure} of a current $T$ is the minimal Borel measure satisfying \eqref{eqn_mass_T}, denoted $\|T\|$. The \emph{mass} of $T$, denoted $\M(T)$, is $\|T\|(X)$, finite by definition.  The \emph{canonical set} of $T$ is comprised of the points in $X$ at which $\|T\|$ has positive lower density (cf. \cite[Theorem 4.6]{AK}):
\begin{equation}
\label{set_T}
\set(T) = \left\{ p \in X \; \Big| \; \liminf_{r \to 0} \frac{\|T\|(B(p,r))}{r^m} > 0 \right\},
\end{equation}
where $B(p,r)$ denotes an open metric ball in $X$.

The standard operations on classical (de Rham) currents on $\R^n$ (e.g., boundary, restriction, slice, push-forward) naturally carry over to the present setting. We describe these in detail now, continuing to follow \cite{AK}.

\smallskip
\paragraph{\emph{Boundary:}}
If $T$ is an $m$-current, $m \geq 1$, its \emph{boundary} $\partial T$ is the multilinear functional on $\mathcal{D}^{m-1}$ given by
$$\partial T(f, \pi_1, \ldots, \pi_{m-1}) = T(1,f, \pi_1, \ldots, \pi_{m-1}).$$
(If $T$ is a $0$-current, let $\partial T$ be the zero $0$-current.) Note that $\partial T$ need not be an $(m-1)$-current; if it is,  $T$ is called a \emph{normal current} (\cite[Definition 3.4]{AK}). In all cases, $\partial(\partial T)=0$.	

\smallskip
\paragraph{\emph{Restriction:}}
We recall the definition of the restriction of an $m$-current $T$ in a few special cases only. First, if $g:X \to \R$ is a bounded, Lipschitz function, define the multilinear functional on $\mathcal{D}^{m}$ by
$$(T \rst g)(f,\pi_1, \ldots, \pi_m) = T(fg,\pi_1, \ldots, \pi_m).$$
In fact, $T \rst g$ is an $m$-dimensional current. Second, to restrict $T$ to a Borel-measurable set $A \subset X$, we replace $g$ with an indicator function $\chi_A$. Although $\chi_A$ is not Lipschitz, it is explained in (2.3) of \cite{AK} that a current $T$ can be uniquely extended so that the first argument may be a bounded Borel function. Thus,
$$(T \rst A)(f,\pi_1, \ldots, \pi_m) = T(f\chi_A,\pi_1, \ldots, \pi_m)$$
is a well-defined $m$-dimensional current.

Third, define the restriction of $T$ (for $m\geq 1$) by ``$dg$'' as
$$(T \rst dg)(f,\pi_1, \ldots, \pi_{m-1}) = T(f,g,\pi_1, \ldots, \pi_{m-1}),$$
an $(m-1)$-current. It is straightforward to check that $\|T \rst dg\| \leq \Lip(g)\|T\|$.

\smallskip

\paragraph{\emph{Slicing:}}
Ambrosio--Kirchheim also define the \emph{slice} of a normal $m$-current $T$, $m \geq 1$, by a Lipschitz function $u:X \to \R$ at value $s \in \R$ as the $(m-1)$-dimensional multilinear functional
$$\langle T, u, s\rangle =  -\partial \left(T \llcorner u^{-1}(s,\infty)\right) + (\partial T) \llcorner u^{-1}(s,\infty),$$
which need not be a current in general. It is straightforward to check that the slice can also be written:
$$\langle T, u, s\rangle = \partial \left(T \llcorner u^{-1}(-\infty,s]\right) - (\partial T) \llcorner u^{-1}(-\infty,s].$$
It is also convenient to note:
\begin{align*}
\langle T, -u, -s\rangle &= -\partial \left(T \llcorner u^{-1}[s,\infty)\right) + (\partial T) \llcorner u^{-1}[s,\infty)\\
&= \partial \left(T \llcorner u^{-1}(-\infty,s)\right) - (\partial T) \llcorner u^{-1}(-\infty,s).
\end{align*}

\begin{thm}[Ambrosio--Kirchheim slicing theorem, Theorem 5.6 of \cite{AK} ]
\label{thm_slicing}
If $T$ is an $m$-dimensional normal current on $(X,d)$, $m \geq 1$ and $u:X \to \R$ is Lipschitz, then $\langle T, u, s \rangle$ is a normal $(m-1)$-current for almost every $s \in \R$, and
\begin{align}
\int_{-\infty}^\infty \M \langle T, u, s\rangle ds &= \M(T \rst du) \label{eqn_slicing}\\
&\leq  \Lip(u) \M(T). \nonumber
\end{align}
\end{thm}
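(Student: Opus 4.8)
The plan is to reduce everything to a single ``coarea identity'' for the slices, namely that for every $\omega=(f,\pi_1,\dots,\pi_{m-1})\in\mathcal{D}^{m-1}$ and every bounded Borel function $\rho:\R\to\R$ one has
\[
\int_{-\infty}^{\infty}\rho(s)\,\langle T,u,s\rangle(\omega)\,ds=\big(T\rst du\big)\big(f\,\rho(u),\pi_1,\dots,\pi_{m-1}\big),
\]
and then to extract the a.e.\ normality of the slices and the mass formula from it. The bound $\M(T\rst du)\le\Lip(u)\M(T)$ is already immediate from the estimate $\|T\rst du\|\le\Lip(u)\|T\|$ recorded above, evaluated on $X$. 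Moreover, replacing $u$ by the truncation $u_N=\max(-N,\min(N,u))$ and using that $\{u_N>s\}=\{u>s\}$ whenever $|s|<N$ — so that $\langle T,u_N,s\rangle=\langle T,u,s\rangle$ there, while $\langle T,u_N,s\rangle=0$ for $|s|\ge N$ — together with $\|T\rst du_N\|=\|T\rst du\|\rst u^{-1}(-N,N)$ for all but countably many $N$, reduces us to a bounded function $u$, say $a\le u\le b$; the general statement then follows on letting $N\to\infty$, since $\int_{-N}^{N}\M\langle T,u,s\rangle\,ds=\M(T\rst du_N)\to\M(T\rst du)$ by monotone convergence.

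For bounded $u$ I would start from the equivalent expression
\[
\langle T,u,s\rangle(\omega)=-T\big(\chi_{\{u>s\}},f,\pi_1,\dots,\pi_{m-1}\big)+(\partial T)\big(f\chi_{\{u>s\}},\pi_1,\dots,\pi_{m-1}\big),
\]
legitimate because $T$ and $\partial T$ have finite mass and hence extend to bounded Borel functions in the slots shown, and vanishing for $s\notin[a,b]$. Since $\int_{\R}\chi_{\{u>s\}}(x)\,ds=u(x)-a$ on $\{a\le u\le b\}$, and since $s\mapsto T(\chi_{\{u>s\}},f,\pi_1,\dots,\pi_{m-1})$ and $s\mapsto(\partial T)(f\chi_{\{u>s\}},\pi_1,\dots,\pi_{m-1})$ are of bounded variation (their increments over $(s_1,s_2]$ being bounded by the $\|T\|$-, resp.\ $\|\partial T\|$-measure of the slab $\{s_1<u\le s_2\}$), a Riemann-sum approximation together with the $L^1$-continuity of $h\mapsto T(h,f,\pi_1,\dots,\pi_{m-1})$ with respect to $\|T\|$ (and likewise for $\partial T$) yields $\int_{\R}T(\chi_{\{u>s\}},f,\pi_1,\dots,\pi_{m-1})\,ds=T(u-a,f,\pi_1,\dots,\pi_{m-1})$ and $\int_{\R}(\partial T)(f\chi_{\{u>s\}},\pi_1,\dots,\pi_{m-1})\,ds=T\big(1,f(u-a),\pi_1,\dots,\pi_{m-1}\big)$. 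By the chain/Leibniz rule for metric currents, $T\big(1,f(u-a),\pi_1,\dots,\pi_{m-1}\big)=T(u-a,f,\pi_1,\dots,\pi_{m-1})+T(f,u-a,\pi_1,\dots,\pi_{m-1})$, while $T(f,u-a,\pi_1,\dots,\pi_{m-1})=T(f,u,\pi_1,\dots,\pi_{m-1})=(T\rst du)(\omega)$ by multilinearity and locality (a constant slot annihilates the current); subtracting gives the identity for $\rho\equiv1$. Running the same computation with $u$ replaced by $\phi\circ u$ for strictly increasing Lipschitz $\phi$ — using $\{\phi\circ u>\phi(t)\}=\{u>t\}$ and the one-variable chain rule $d(\phi\circ u)=\phi'(u)\,du$, then changing variables $s=\phi(t)$ — and approximating recovers the identity for general $\rho$. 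I expect this paragraph to contain the main technical obstacle: justifying the interchange of $T$ with the $s$-integral, invoking the chain/Leibniz rule, and (as in Ambrosio--Kirchheim) arranging the needed measurability in $s$ by restricting the test tuples to a countable mass-determining family.

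Given the coarea identity, the remaining claims are formal. Disintegrating the finite measure $u_{\#}\|T\rst du\|$ on $\R$ along the level sets of $u$ — rigorously, testing the identity against approximate identities $\rho=\rho_\varepsilon$ concentrating at a point and applying Lebesgue differentiation, uniformly over the countable determining family — shows that for a.e.\ $s$ the functional $\langle T,u,s\rangle$ has finite mass, with $\M\langle T,u,s\rangle\le\theta(s)$, where $\theta$ is the density of the absolutely continuous part of $u_{\#}\|T\rst du\|$; in particular $s\mapsto\M\langle T,u,s\rangle$ is measurable and $\int_{\R}\M\langle T,u,s\rangle\,ds\le\M(T\rst du)$. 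For such $s$, $\langle T,u,s\rangle$ is a genuine current (continuity and locality are automatic for a boundary and for a Borel restriction), and since $\partial\langle T,u,s\rangle=-\langle\partial T,u,s\rangle$, with $\partial T$ itself a normal current (its boundary being zero), the same finite-mass conclusion applied to $\partial T$ in place of $T$ shows $\langle T,u,s\rangle$ is normal for a.e.\ $s$. For the reverse mass inequality, fix $\varepsilon>0$ and choose finitely many tuples $\omega^k=(f_k,\pi_1^k,\dots,\pi_{m-1}^k)$ with $\sum_k|f_k|\le1$, $\Lip\pi_i^k\le1$, and $\sum_k(T\rst du)(\omega^k)>\M(T\rst du)-\varepsilon$; then, using the identity with $\rho\equiv1$ and the definition of mass,
\[
\M(T\rst du)-\varepsilon<\sum_k\int_{\R}\langle T,u,s\rangle(\omega^k)\,ds=\int_{\R}\sum_k\langle T,u,s\rangle(\omega^k)\,ds\le\int_{\R}\M\langle T,u,s\rangle\,ds,
\]
and letting $\varepsilon\to0$ gives $\M(T\rst du)\le\int_{\R}\M\langle T,u,s\rangle\,ds$, which together with the previous inequality and the bound on $\M(T\rst du)$ completes the proof.
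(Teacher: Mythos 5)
A preliminary remark on the comparison you were asked to survive: the paper does not prove this statement at all --- it is imported verbatim as Theorem 5.6 of \cite{AK} --- so the only meaningful benchmark is Ambrosio--Kirchheim's own proof, whose broad strategy (a coarea identity $\int_\R \rho(s)\,\langle T,u,s\rangle(f,\pi_1,\dots,\pi_{m-1})\,ds=(T\rst du)(f\rho(u),\pi_1,\dots,\pi_{m-1})$, followed by disintegration in $s$) your sketch does follow. The pieces you actually carry out are essentially right: the reduction to bounded $u$ by truncation, the $\rho\equiv 1$ identity via bounded variation in $s$, Riemann sums, and the Leibniz/locality manipulations, and the inequality $\int_\R\M\langle T,u,s\rangle\,ds\geq\M(T\rst du)$ via the finite-family characterization of the mass measure. (One slip there: after reducing to $a\le u\le b$ you integrate the two terms of the slice separately over all of $\R$; for $s<a$ one has $\chi_{\{u>s\}}\equiv 1$, so $T(\chi_{\{u>s\}},f,\pi_1,\dots,\pi_{m-1})=\partial T(f,\pi_1,\dots,\pi_{m-1})$, and the separate integrals diverge whenever this is nonzero --- you must integrate over $[a,b]$, where the slice is supported, before splitting.)

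The genuine gap sits exactly at the steps you flag as ``the main technical obstacle,'' and they are not routine. First, the deduction that $\langle T,u,s\rangle$ has finite mass for a.e.\ $s$ is not actually performed: testing the coarea identity against approximate identities and applying Lebesgue differentiation over a countable family of tuples yields, for a.e.\ $s$, only countably many scalar bounds of the form $|\langle T,u,s\rangle(f_k,\pi^k)|\le\Phi_{|f_k|}(s)$, whereas finite mass in the Ambrosio--Kirchheim sense requires producing a finite Borel measure $\mu_s$ with $|\langle T,u,s\rangle(f,\pi)|\le\int|f|\,d\mu_s$ for \emph{all} admissible tuples, together with measurability of $s\mapsto\M\langle T,u,s\rangle$. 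Your appeal to a ``countable mass-determining family'' presupposes structure that is only available once finite mass is known, so as written the argument is circular at its crux; supplying the dominating measure (via BV-type estimates on $s\mapsto T\rst\{u\le s\}$ and a separability/mass-criterion argument) is precisely where the substance of Theorem 5.6 of \cite{AK} lies. Second, the extension from $\rho\equiv1$ to general bounded Borel $\rho$ rests on the chain rule $d(\phi\circ u)=\phi'(u)\,du$ for increasing Lipschitz $\phi$, which for metric currents is itself a nontrivial piece of the Ambrosio--Kirchheim calculus and is asserted rather than justified. So the proposal should be read as a faithful outline of the known proof, not a self-contained one; in the context of this paper, the citation to \cite{AK} is the intended proof, and reproving the slicing theorem from scratch is neither attempted by the authors nor completed by your sketch.
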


\medskip
\paragraph{\emph{Push-forward:}}
Let $(X, d_X)$ and $(Y, d_Y)$ be complete metric spaces, and suppose $\varphi: X \to Y$ is Lipschitz. Given an $m$-current $T$ on $X$, we have an $m$-current $\varphi_{\#}T$ on $Y$ defined by
$$(\varphi_{\#}T)(f, \pi_1, \ldots, \pi_m)  = T(f \circ \varphi, \pi_1 \circ \varphi, \ldots, \pi_m \circ \varphi).$$
From (2.4) of \cite{AK},
$$\|\varphi_{\#} T\| \leq (\Lip(\varphi))^m \|T\|,$$
with equality if $\varphi$ is an isometry.

Next, we recall that push-forwards commute with boundary, restriction, and slicing: it is straightforward to check
\begin{equation}
\label{eqn_bdry_commute}
\partial (\varphi_{\#} T) = \varphi_{\#}(\partial T).
\end{equation}
In particular, if $T$ is a normal current on $X$, then $\varphi_{\#} T$ is a normal current on $Y$. 
If $g:Y \to \R$ is a Lipschitz function,
then \begin{equation}
\label{eqn_restrict_pushforward}
(\varphi_{\#} T) \llcorner dg  = \varphi_{\#}(T \llcorner d(g \circ \varphi)),
\end{equation}
as in \cite[equation (21)]{S14}.  Also, for a Borel set $E \subset \R$,
\begin{equation}
\label{eqn_restrict_pushforward2}
(\varphi_{\#} T) \llcorner g^{-1}(E) = \varphi_{\#} \left(T \llcorner (g \circ \varphi)^{-1} (E)\right),
\end{equation}
as in \cite[equation (22)]{S14}. Finally, although not needed, we remark that it is a straightforward exercise to show that push-forwards commute with slicing, i.e. for all $s \in \mathbb{R}$, 
\begin{equation}
\label{eqn_slice_pushforward}
\varphi_{\#} \langle T, g \circ \varphi, s \rangle = \langle \varphi_{\#} T, g, s\rangle,
\end{equation}
if $T$ is an $m$-current, $m\geq 1$. 

%To prove this, we use \eqref{eqn_restrict_pushforward2} and \eqref{eqn_bdry_commute}:
%\begin{align*}
%\varphi_{\#} \langle T, g \circ \varphi, s \rangle &= \varphi_{\#}\left(\partial \left(T \llcorner (g \circ \varphi)^{-1}(-\infty,s]\right)\right) - \varphi_{\#}\left((\partial T) \llcorner (g \circ \varphi)^{-1}(-\infty,s]\right)\\
%&=\partial\left(\varphi_{\#}  \left(  T \llcorner \varphi^{-1} (g^{-1}(-\infty,s])\right)\right) - \varphi_{\#}\left((\partial T) \llcorner \varphi^{-1}(g^{-1}(-\infty,s])\right)\\
%&=\partial\left(  \left( \varphi_{\#} T\right) \llcorner g^{-1}(-\infty,s]\right) - \left( \varphi_{\#}\partial T\right) \llcorner g^{-1}(-\infty,s]\\
%&= \langle \varphi_{\#} T, g, s \rangle.
%\end{align*}

\smallskip
\paragraph{\emph{Leibniz rule for restriction and boundary:}}
We require one more set of facts regarding the interaction between the restriction and boundary operations:
\begin{lemma}
\label{lemma_leibniz}
Suppose $T$ is a normal $m$-current on $X$, with $m\geq 1$.
\begin{enumerate}
\item[(a)] If $g$ and $h$ are Lipschitz functions on $X$, then
$$\partial((T \rst g) \rst h) = \partial (T \rst g) \rst h + \partial (T \rst h) \rst g - ((\partial T)\rst g)\rst h.$$
\item[(b)] If $A$ and $B$ are Borel subsets of $X$, then
$$\partial((T \rst A) \rst B) = \partial (T \rst A) \rst B + \partial (T \rst B) \rst A - ((\partial T)\rst A)\rst B,$$
provided each of the terms is a current on $X$.
\end{enumerate}
\end{lemma}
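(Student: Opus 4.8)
We outline the argument we would give.

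For part (a) the plan is to reduce everything to the single identity
$$\partial(T\rst g)=(\partial T)\rst g-T\rst dg,$$
call it $(\star)$, valid for a normal $m$-current $T$ and a bounded Lipschitz $g$. This is immediate from the definitions of $\partial$ and of $T\rst dg$ together with the product (Leibniz) rule for normal currents, $T(1,fg,\pi_1,\ldots,\pi_{m-1})=T(f,g,\pi_1,\ldots,\pi_{m-1})+T(g,f,\pi_1,\ldots,\pi_{m-1})$, which is a special case of the Ambrosio--Kirchheim chain rule; note $(\star)$ also shows $T\rst g$ is again normal. Applying $(\star)$ to the normal current $T\rst g$ with the function $h$, and then to $T$ with $g$, rewrites the left side of (a) as $((\partial T)\rst g)\rst h-(T\rst dg)\rst h-(T\rst g)\rst dh$; expanding the right side of (a) by two more applications of $(\star)$ and collecting terms, one needs only the elementary commutations $(S\rst g)\rst h=(S\rst h)\rst g$, valid for any current $S$ since multiplication of functions commutes, and $(T\rst g)\rst dh=(T\rst dh)\rst g$, since both sides send $(f,\pi_1,\ldots,\pi_{m-1})$ to $T(fg,h,\pi_1,\ldots,\pi_{m-1})$. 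No estimates enter; it is pure bookkeeping.

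For part (b) I would run the same computation with $g,h$ replaced by $\chi_A,\chi_B$, now understood through the Ambrosio--Kirchheim extension of a current to a bounded Borel first argument, using $\chi_A\chi_B=\chi_{A\cap B}$ in place of the product rule and using the proviso ``each term is a current'' to guarantee the intermediate expressions are genuine currents (so that, e.g., $\partial(T\rst A)$ may be restricted by $B$). The counterpart of $(\star)$ is now the tautology $\partial(T\rst A)=(\partial T)\rst A-S_A$ with $S_A:=(\partial T)\rst A-\partial(T\rst A)$; together with the trivial commutation $((\partial T)\rst A)\rst B=((\partial T)\rst B)\rst A$ the whole identity reduces to the one genuinely non-formal input, a Leibniz rule $S_{A\cap B}=S_A\rst B+S_B\rst A$ for these ``defect'' currents. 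I would establish this first for sublevel sets $A=u^{-1}((-\infty,s])$ and $B=v^{-1}((-\infty,t])$ of Lipschitz functions $u,v$ --- where, for a.e.\ $s$ and a.e.\ $t$, $-S_A$ and $-S_B$ are exactly the Ambrosio--Kirchheim slices $\langle T,u,s\rangle$ and $\langle T,v,t\rangle$ --- by applying the slicing theorem (Theorem \ref{thm_slicing}) to $T$ and to the normal current $T\rst B$, using a Fubini argument to choose the exceptional null set in $(s,t)$ consistently, and using that restriction by a Borel set commutes with $T\mapsto T\rst du$ (hence, through the coarea identity, with slicing at a.e.\ level). The general Borel case would then follow by approximating $A$ and $B$ by such sublevel sets, the finite mass of the defect currents (supplied by the proviso) allowing passage to the limit.

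The hard part is (b): not the formal manipulation, but the Leibniz rule $S_{A\cap B}=S_A\rst B+S_B\rst A$ and, above all, its extension from sublevel sets to arbitrary Borel sets. The reason (a) has to be proved on its own is that restriction by a Borel set interacts with $\partial$ with no analogue of a product rule --- there is no ``$d\chi_A$'' --- so that essentially every natural reformulation (``slicing commutes with restriction'', ``the defect currents satisfy a Leibniz rule'', and so on) turns out to be equivalent to (b) itself; one is therefore forced to descend to the level of mass measures and argue with the slicing and coarea machinery, tracking the relevant null sets carefully. Correspondingly, the hypothesis ``provided each of the terms is a current'' is essential rather than cosmetic: it is precisely the finite-mass control that legitimizes the limiting argument.
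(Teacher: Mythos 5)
Your part (a) is correct, and it is essentially the paper's own argument in different packaging: your identity $(\star)$, $\partial(T\rst g)=(\partial T)\rst g-T\rst dg$, is exactly the Ambrosio--Kirchheim product rule that the paper applies term-by-term on test tuples, and the commutations $(S\rst g)\rst h=(S\rst h)\rst g$ and $(T\rst g)\rst dh=(T\rst dh)\rst g$ are the same bookkeeping; nothing is lost or gained either way.

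For part (b), however, there is a genuine gap, and it stems from a misdiagnosis of the difficulty. The paper does not descend to slicing, coarea, or defect currents at all: it obtains (b) by running the computation of (a) with $g,h$ replaced by $\chi_A,\chi_B$, using the extension of a finite-mass current to bounded Borel functions in its first argument (the discussion around (2.3) in \cite{AK}); the proviso that each grouped term is a current is precisely what makes evaluations such as $(\partial(T\rst A)\rst B)(f,\pi_1,\dots,\pi_{m-1})=\partial(T\rst A)(f\chi_B,\pi_1,\dots,\pi_{m-1})$ well defined, and the individually ill-defined cross terms (the would-be ``$d\chi_A$'' terms) cancel in pairs exactly as in (a). Your route instead reduces (b) to the Leibniz rule $S_{A\cap B}=S_A\rst B+S_B\rst A$ for the defect currents, proves it only for sublevel sets of Lipschitz functions at almost every level, and then asserts that the general Borel case follows ``by approximating $A$ and $B$ by such sublevel sets, the finite mass of the defect currents \dots allowing passage to the limit.'' That last step is unsupported and is where the argument would fail: if $\|T\|(A_k\triangle A)\to 0$ then $T\rst A_k\to T\rst A$ in mass, but the defects $S_{A_k}=(\partial T)\rst A_k-\partial(T\rst A_k)$ need not converge to $S_A$ in mass (boundary mass is only lower semicontinuous under such convergence), and restriction by a fixed Borel set is not continuous under weak convergence, so nothing forces $S_{A_k}\rst B\to S_A\rst B$ or $S_{A_k\cap B_k}\to S_{A\cap B}$. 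The finiteness of $\M(S_A)$, $\M(S_B)$, $\M(S_{A\cap B})$ supplied by the proviso controls only the limiting objects, not the approximating ones, and you would additionally need the identity at the specific sets $A,B$ you are given, not merely at almost every level of an auxiliary function. As written, the extension from sublevel sets to arbitrary Borel sets is an assertion rather than a proof, whereas the direct substitution argument of the paper sidesteps the issue entirely.
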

\begin{proof}
For (a), consider a $m$-tuple $(f,\pi_1, \ldots, \pi_{m-1})$. We compute the terms one-by-one:
\begin{align*}
\partial((T \rst g) \rst h)(f,\pi_1, \ldots, \pi_{m-1}) &= ((T \rst g) \rst h)(1, f,\pi_1, \ldots, \pi_{m-1}) = T(gh, f,\pi_1, \ldots, \pi_{m-1}).
\end{align*}
Next,
\begin{align*}
(\partial (T \rst g) \rst h)(f,\pi_1, \ldots, \pi_{m-1})  &= \partial (T \llcorner g) (fh, \pi_1, \ldots, \pi_{m-1}) = T(g,fh,\pi_1, \ldots \pi_{m-1}).
\end{align*}
Using a Leibniz rule (see \cite[Theorem 3.5(i)]{AK}), we have
\begin{align*}
(\partial (T \rst g) \rst h)(f,\pi_1, \ldots, \pi_{m-1}) = T(gh,f,\pi_1, \ldots, \pi_{m-1}) + T(fg,h, \pi_1, \ldots, \pi_{m-1}).
\end{align*}
Switching $g$ and $h$, we have
\begin{align*}
(\partial (T \rst h) \rst g)(f,\pi_1, \ldots, \pi_{m-1})  &= T(gh,f,\pi_1, \ldots, \pi_{m-1}) + T(fh,g, \pi_1, \ldots, \pi_{m-1}).
\end{align*}
Again from the Leibniz rule,
\begin{align*}
(((\partial T)\rst g)\rst h)(f,\pi_1, \ldots, \pi_{m-1})   &= T(1,fgh,\pi_1,\ldots \pi_{m-1})\\
&=T(fg,h,\pi_1, \ldots, \pi_{m-1})  + T(fh, g, \pi_1, \ldots, \pi_{m-1})\\
&\qquad\qquad  + T(gh, f, \pi_1, \ldots, \pi_{m-1}).
\end{align*}
From these calculations, (a) follows.

Part (b) follows from (a) upon replacing $g$ and $h$ with the characteristic functions $\chi_A$ and $\chi_B$. The assumption that the terms are currents (and so have finite mass) assures that expressions such as
$$(\partial (T \rst A) \rst B)(f,\pi_1, \ldots, \pi_{n-1})  = T(\chi_A,f\chi_B,\pi_1, \ldots \pi_{m-1})$$
are well-defined, cf. the discussion around (2.3) in \cite{AK}.
%Part (b) will follow from (a) by realizing the indicator functions $\chi_A$ and $\chi_B$ as $L^1(\|T\|)$ limits of uniformly bounded, Lipschitz function sequences $\{g_i\}$ and $\{h_i\}$, respectively. Since $T$ has finite mass, $T$ is continuous in $L^1(\|T\|)$ in the first slot (Theorem 3.5(ii) of \cite{AK}). Since $\partial T$ has finite mass, $T$ is continuous in $L^1(\|T\|)$ in the second slot as well. Thus, the argument from (a) completes the proof.
\end{proof}

\subsection{Integer rectifiable and integral currents}
In order to streamline the discussion of Ambrosio--Kirchheim integer rectifiable currents and integral currents, we follow the approach of \cites{SW,S16} for example and use some of the theorems in \cite{AK} in place of definitions.

Using \cite[Theorem 4.5]{AK}, we say an $m$-current $T$ on $X$, $m \geq 1$ is \emph{integer rectifiable} if there is a sequence  $\{K_i\}_{i=1}^\infty$ of compact subsets of $\R^m$, a 
sequence of $L^1$ functions $\theta_i: \R^m \to \Z$, with $\theta_i$ supported in $K_i$, and maps $\varphi_i: K_i \to X$ that are bi-Lipschitz onto their images, such that
$$T = \sum_{i=1}^\infty T_i \qquad \text{ and }\qquad \sum_{i=1}^\infty \M(T_i) = \M(T),$$
where
$$T_i(f, \pi_1, \ldots, \pi_m) = \int_{K_i} \theta_i (f \circ \varphi_i) d(\pi_1 \circ \varphi_i) \wedge \ldots \wedge d(\pi_m \circ \varphi_i).$$
Note that the integrand in the definition of $T_i$ is a measurable differential form that is defined almost everywhere and integrable, so that the integral is well-defined.

An integer rectifiable $m$-current $T$, $m \geq 1$, is called an \emph{integral current} if $\partial T$ is an $(m-1)$-current (i.e., has finite mass).  By \cite[Theorem 8.6]{AK}, this is equivalent to $\partial T$ itself being integer rectifiable. An integer rectifiable $0$-current is automatically regarded as an integral current.

Integral currents behave well under slicing: if $T$ is an integral current and $u:X \to \R$ is a Lipschitz function, then $\langle T, u, s\rangle$ is an integral current for almost all $s \in \R$ (cf. \cite[Theorem 5.7]{AK}).

A particularly important example is as follows: a compact, oriented, connected $C^0$ Riemannian $m$-manifold $(M,g)$, possibly with boundary, canonically produces an integral $m$-current $T$ on $(M, d_g)$ via integration:
\begin{equation}
\label{eqn_canonical}
T(f,\pi_1, \ldots, \pi_m) = \int_M f d\pi_1 \wedge \ldots \wedge d\pi_m.
\end{equation}
Note that $T$ is independent of the metric, as all $C^0$ Riemannian metrics on $M$ are uniformly equivalent and thus determine identical classes of Lipschitz functions. Of course the mass measure $\|T\|$ depends on $g$.

\subsection{Sormani--Wenger integral current spaces}
We are now in position to recall the definition of an integral current space, due to Sormani and Wenger (cf. \cite[Definitions 2.35 and 2.46]{SW}). At this point, we drop the assumption that $(X,d)$ is complete.

\begin{definition}[\cite{SW}]
For an integer $m \geq 0$, an \emph{integral current space} of dimension $m$ is a triple $N=(X,d,T)$ in which $(X,d)$ is a  metric space and $T$ is an integral $m$-current on the completion $(\bar X, \bar d)$, such that $\set(T) = X$. 
\end{definition}
We also include the zero space of each dimension $m$, with the empty metric space and $T=0$, as an integral current space.

The mass of $N$, denoted $\M(N)$, is defined to be $\M(T)$. If $A \subset X$ is Borel, and if the restriction $T \rst A$ is an integral current, then
$$N \rst A = (\set(T \rst A), \bar d, T \rst A)$$
is an integral current space. If $m\geq 1$, the \emph{boundary} of $N$ is the integral current space
$$\partial N = (\set(\partial T),  \bar d, \partial T).$$

In the case that $T$ is the canonical integral current \eqref{eqn_canonical} of a compact, connected, oriented $C^0$ Riemannian manifold $(M,g)$, we show in Lemma \ref{lemma_mass_measure}(a) that the mass measure $\|T\|$ agrees with the Riemannian volume measure on Borel sets, so in particular $\set(T) = M$, and $(M,d_g,T)$ is an integral current space, with mass equal to the volume of $M$.

Using the Ambrosio--Kirchheim slicing theorem, Sormani, in \cite[Lemma 2.34]{S14}, proved that in an integral current space $N=(X,d,T)$, for every $p \in X$, $T \rst  B(p,r)$ is an integral current for almost every $r>0$, and that
$$B(p,r) \subseteq \set(T \llcorner B(p,r)) \subseteq \bar B(p,r),$$
where $B(p,r)$ and $\bar B(p,r)$ are open and closed balls in $X$. In particular, $N\rst B(p,r)$ is an integral current space for every $p \in X$ and almost every $r>0$.

\subsection{Local integral current spaces and asymptotic flatness}
\label{sec_local_ICS}
One complication for us is that the Ambrosio--Kirchheim definition of current requires finite mass. Working in the asymptotically flat setting requires a generalization of their definition to a type of current that need only have locally finite mass. We use the approach of U. Lang and Wenger, recalling only the main ideas here and referring to their paper for further details \cite{LW}. We relate statements back to the language of Ambrosio--Kirchheim currents whenever possible.

Given a metric space $(X,d)$, Lang and Wenger first consider multilinear functionals $T$ on $(m+1)$-tuples $(f, \pi_1, \ldots, \pi_m)$, where $f$ is a bounded Lipschitz function $X \to \R$ with bounded support, and each $\pi_i:X \to \R$ is Lipschitz on every bounded subset of $X$. Such a $T$ is called a \emph{metric functional} if it satisfies continuity and locality properties analogous to (i) and (ii) in Definition \ref{def_current} (see \cite[Definition 2.1]{LW}).  There is a natural notion of the \emph{mass} of $T$ on any open subset $V \subseteq X$, denoted $\M_V(T) \in [0, \infty]$. $T$ is called an \emph{$m$-dimensional metric current with locally finite mass} provided:
\begin{enumerate}
\item[(a)] $\M_V(T)$ is finite whenever $V \subseteq X$ is bounded and open, and
\item[(b)] for any bounded open set $V \subseteq X$ and $\epsilon>0$, there exists a compact set $C \subset V$ such that $\M_{V \setminus C}(T)<\epsilon$.
\end{enumerate}
For such $T$, there is a natural Borel regular outer measure $\|T\|$ on $X$, with $\|T\|(V) = \M_V(T)$ on bounded open sets $V$. An Ambrosio--Kirchheim current naturally determines a metric current with locally finite mass, and the mass measures of these objects agree on Borel sets (see \cite[section 2.5]{LW}). For a metric current with locally finite mass, we define $\set(T)$ in exactly the same way as  in \eqref{set_T}.

The concepts of boundary, push-forward, restriction, and slicing have natural extensions to the locally finite mass setting. Lang and Wenger define \emph{locally normal currents}, \emph{locally integer rectifiable currents}, and \emph{locally integral currents}; again we refer the reader to \cite{LW} for details.

We now give:

\begin{definition}
A \emph{local integral current space} of dimension $m \geq 0$ is a triple $N=(X,d,T)$ in which $(X,d)$ is a metric space and $T$ is a locally integral $m$-current on $(\bar X, \bar d)$ with $\set(T)=X$.
\end{definition}

An important example of a local integral current space is a complete, connected, oriented $C^0$ Riemannian manifold $(M,g)$, with the canonical choice of $T$ as in \eqref{eqn_canonical}, defined on the appropriate class of functions (see \cite[section 2.8]{LW}). 

Fortunately, there is a direct relationship between local integral current spaces and integral current spaces. We work with the metric space completion for consistency with \cite{AK} (although see the discussion at the end of \cite[section 2.2]{LW}).
\begin{lemma}
\label{lemma_local_IC}
Let $(X,d)$ be a metric space, and let $q \in X$. Let $T$ be a locally integral $m$-current on $(\bar X, \bar d)$, $m \geq 1$. Then for almost every $r>0$,
$T \llcorner B(q,r)$, defined naturally on $\mathcal{D}^m$, is an integral $m$-current on $(\bar X, \bar d)$.

In particular, if $N=(X,d,T)$ is a local integral current space of dimension $m\geq0$, then
$$N \llcorner B(q,r) := (\set(T \llcorner B(q,r)), \bar d, T \llcorner B(q,r))$$
is an integral current space of dimension $m$ for almost every $r>0$.
\end{lemma}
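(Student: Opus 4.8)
The plan is to reduce the statement to an application of the Ambrosio--Kirchheim slicing theorem (Theorem \ref{thm_slicing}) combined with the Lang--Wenger local-to-global machinery. First I would observe that the distance function $u = \bar d(q, \cdot)$ is $1$-Lipschitz on $(\bar X, \bar d)$, and that $B(q,r) = u^{-1}(-\infty, r)$, so the restriction $T \llcorner B(q,r)$ is exactly the restriction of $T$ to a sub-level set of a $1$-Lipschitz function. By hypothesis $T$ is a locally integral $m$-current, so for every bounded open $V$ the mass $\M_V(T)$ is finite; in particular $\M_{B(q,R)}(T) < \infty$ for each $R > 0$. I would then fix $R$ and work with $r < R$, so that all the relevant sets sit inside a region of finite mass.

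The key step is to control the boundary. For $r < R$, consider the integral current $S_R := T \llcorner B(q,R)$ (which is an integral current for a.e.\ $R$, once we establish this for one radius — or we can phrase the argument purely in terms of the locally normal current $T$ directly). We have the slicing identity, valid for locally normal currents,
\[
\langle T, u, r \rangle = \partial\big(T \llcorner u^{-1}(-\infty, r]\big) - (\partial T) \llcorner u^{-1}(-\infty, r],
\]
and the Lang--Wenger version of the slicing theorem (the local analogue of Theorem \ref{thm_slicing}) gives that $\langle T, u, r\rangle$ is a locally integral $(m-1)$-current for a.e.\ $r$, together with the coarea bound $\int_0^R \M\langle T, u, r\rangle\, dr \le \M_{B(q,R)}(T) < \infty$. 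Hence for a.e.\ $r \in (0,R)$ the slice $\langle T, u, r\rangle$ has finite mass. Now $T\llcorner B(q,r)$ has finite mass (it is bounded by $\M_{B(q,R)}(T)$), and its boundary is
\[
\partial\big(T \llcorner B(q,r)\big) = \langle T, u, r \rangle + (\partial T) \llcorner B(q,r),
\]
using $B(q,r) = u^{-1}(-\infty,r)$ and the second displayed form of the slice (the one with the half-open interval), up to the measure-zero discrepancy between $u^{-1}(-\infty,r)$ and $u^{-1}(-\infty,r]$ which does not affect $\|T\|$ or $\|\partial T\|$ for a.e.\ $r$. Since $T$ is locally integral, $\partial T$ is a locally integer rectifiable $(m-1)$-current, so $(\partial T)\llcorner B(q,r)$ has finite mass; and we just showed $\langle T,u,r\rangle$ has finite mass for a.e.\ $r$. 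Therefore $\partial(T\llcorner B(q,r))$ has finite mass for a.e.\ $r$, i.e.\ $T\llcorner B(q,r)$ is a \emph{normal} current. To upgrade from normal to integral, invoke that $T$ is locally integer rectifiable, hence $T \llcorner B(q,r)$ is integer rectifiable (restriction to a Borel set preserves integer rectifiability), and an integer rectifiable current whose boundary has finite mass is an integral current by \cite[Theorem 8.6]{AK}. This proves the first assertion: $T\llcorner B(q,r)$ is an integral $m$-current for a.e.\ $r > 0$ (intersecting the a.e.-$r$ sets over a countable exhausting sequence of radii $R$).

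For the ``in particular'' clause, once $T \llcorner B(q,r)$ is a genuine (Ambrosio--Kirchheim) integral $m$-current on $(\bar X, \bar d)$, the triple $(\set(T\llcorner B(q,r)), \bar d, T \llcorner B(q,r))$ is an integral current space by the very definition recalled earlier, since $\set$ of any integral current $S$ satisfies $\set(S) = \set(S)$ tautologically — that is, the space is defined precisely so that its underlying set equals the canonical set of its current. The dimension-$0$ case ($m = 0$) is trivial since then $\partial T = 0$ and finite mass on bounded sets already makes $T \llcorner B(q,r)$ an integral $0$-current. I expect the main obstacle to be bookkeeping rather than conceptual: carefully matching Lang--Wenger's locally-finite-mass conventions (functionals on compactly-supported test forms, masses measured on bounded open sets) with the Ambrosio--Kirchheim framework (globally finite mass), and in particular checking that restricting a locally integral current to a ball of a generic radius lands one back in the honest Ambrosio--Kirchheim category — this is where the finiteness of $\M_{B(q,R)}(T)$ and the a.e.-$r$ finiteness of the slice mass are both essential, and where one must be slightly careful that ``a.e.\ $r$'' statements from the coarea formula and from the slicing theorem can be intersected.
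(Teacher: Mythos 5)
Your proposal is correct and follows essentially the same route as the paper: restrict $T$ to the ball, note that this restriction is an Ambrosio--Kirchheim integer rectifiable current by the Lang--Wenger compatibility results, and use the slicing theorem for locally normal currents (your explicit slicing identity and coarea bound are exactly what the paper's citations to \cite[Theorem 2.1]{LW} and \cite[Lemma 2.34]{S14} supply) to get finite boundary mass for almost every $r$, then conclude integrality. The bookkeeping you flag at the end (matching the locally-finite-mass framework with the Ambrosio--Kirchheim one) is precisely what the paper handles by citing \cite[sections 2.3, 2.5, 2.7]{LW}, so no genuinely different argument is involved.
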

This is essentially \cite[ Lemma 2.34]{S14}, extended to the locally finite mass setting. 

In the statement and proof of this lemma, $B(q,r)$ denotes an open ball in $(\bar X, \bar d)$.
\begin{proof}
Define $T_r: \mathcal{D}^m \to \R$ (where $\mathcal{D}^m$ denotes Lipschitz $(m+1)$-tuples on $\bar X$ with the first argument bounded) by
$$T_r(f, \pi_1, \ldots, \pi_m) = T(f\chi_{B(q,r)}, \pi_1, \ldots, \pi_m).$$
This is a well-defined multilinear functional since $f\chi_{B(q,r)}$ is a bounded Borel function with bounded support (cf. \cite[section 2.3]{LW}). Next, note $\|T\|(B(q,r))$ is finite by the definition of metric current with locally finite mass.  As pointed out in \cite[section 2.5]{LW}, $T_r$ is a current on $(\bar X, \bar d)$ (since $\chi_{B(q,r)}$ is a bounded Borel function of bounded support). Moreover, $T_r$ is integer rectifiable (in the sense of Ambrosio--Kirchheim), by the results in \cite[section 2.7]{LW} (specifically in the proof of Theorem 2.2).
Thus, it suffices to show $\partial T_r$ has finite (Ambrosio--Kirchheim) mass for almost every $r>0$. 

Again from  \cite[section 2.5]{LW} the (Ambrosio--Kirchheim) mass of  $\partial T_r$ agrees with the mass of $\partial(T \llcorner B(q,r))$. By \cite[Theorem 2.1]{LW} (the slicing theorem for locally normal currents) that for almost every $r>0$, $\partial(T \llcorner B(q,r))$ has finite mass for almost every $r>0$ (cf. the proof of \cite[ Lemma 2.34]{S14} for further details). This completes the proof.
\end{proof}

We now adapt the notion of local integral current space to the asymptotically flat setting. While several definitions here are possible, we use the following.

\begin{definition}
\label{def_AF_LICS}
An $m$-dimensional ($m \geq 3$) \emph{asymptotically flat} (AF) local integral current space $N=(X,d,g,T)$ is quadruple such that $(X,d,T)$ is a connected local integral current space of dimension $m$ with $\partial T=0$, and:
\begin{enumerate}
\item[(a)] There exists a compact set $K \subset X$ such that $\overline{X \setminus K}$ (the closure of $X \setminus K$ in $X$ with respect to $d$) is a smooth $m$-manifold-with-boundary, and  $(\overline{X \setminus K},g)$ is a $C^0$ AF end.
\item[(b)] The metric $d$ restricted to $\overline{X \setminus K}$ is locally compatible with the Riemannian metric $g$ on $\overline{X \setminus K}$ (see Definition \ref{def_locally_compatible} below).
\item[(c)] The restriction of $T$ to $\overline{X \setminus K}$ is the canonical integral current \eqref{eqn_canonical}.

\end{enumerate}
\end{definition}

Note that the metric $d_g$ on $\overline{X \setminus K}$ depends on $K$, but we will always assume $K$ is fixed. Note that Huisken's isoperimetric mass of $N$, for $m=3$, is well-defined, independent of $K$, for such a space (as in Definition \ref{def:isop_mass}), explained as follows. First, the ``volume'' of $\Omega\subset X$ is defined by the $g$-volume of $\Omega \cap (X \setminus K)$ plus $\|T\|(\Omega \cap K)$, which will equal $\|T\|(\Omega)$, by Lemma \ref{lemma_mass_measure}(a). Second, the perimeter of $\Omega \subset X$ is well-defined for $\Omega$ containing $K$, since $\partial \Omega$ lies in $X \setminus K$; moreover, in the definition of isoperimetric mass, we may restrict to open sets that contain $K$.

Note also that an AF local integral current space is automatically locally compact.

An important example of an AF local integral current space is constructed from an oriented $C^0$ AF Riemannian manifold $(M,g)$ by taking the metric $d_g$ and the canonical locally integral current $T$ on $M$ given by integration. The purpose of Definition \ref{def_AF_LICS} is to allow even more general spaces as possible limits in Theorem \ref{thm1}.

\subsection{Local compatibility of distance functions and Riemannian metrics}
\begin{definition}
\label{def_locally_compatible}
Let $(M,g)$ be a connected $C^0$ Riemannian manifold, possibly with boundary, and let $d$ be a metric (distance function) on $M$. Let $d_g$ be the Riemannian distance function on $M$ with respect to $g$. We say $g$ and $d$ are \emph{locally compatible} if given any $p \in M$, there exists a $d$-open set $U$ in $M$ containing $p$ so that for all $x,y \in U$, $d(x,y) = d_g(x,y)$. Such $U$ will be called a \emph{neighborhood of compatibility} of $d$ and $g$.
\end{definition}

For example, if $M$ is $\R^n$ minus a ball, $g$ is the restriction of the Euclidean Riemannian metric to $M$, and $d$ is the restriction of the Riemannian distance function, then $g$ and $d$ are locally compatible though $d_g$ and $d$ are unequal.

In general, it is straightforward to verify that if $g$ and $d$ are locally compatible, then
\begin{equation}
\label{eqn_d_d_g}
d \leq d_g
\end{equation}
as functions on $M \times M$.

\begin{lemma}
	In the above definition, $d$ and $d_g$ induce the same topology, which agrees with the manifold topology.
\end{lemma}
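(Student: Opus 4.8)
The plan is to prove that the $d$-topology and the $d_g$-topology on $M$ coincide; since $d_g$ induces the manifold topology (as recalled in Section~\ref{sec_background1}, using \cite{Bur} in the $C^0$ case, even with boundary), this immediately yields the statement. One inclusion is free from \eqref{eqn_d_d_g}: because $d \leq d_g$ as functions on $M \times M$, the metric ball $B_{d_g}(p,r)$ is contained in $B_d(p,r)$ for every $p \in M$ and $r > 0$, so any $d$-open set contains a $d_g$-ball about each of its points and is therefore $d_g$-open. Hence the $d$-topology is contained in the $d_g$-topology.

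For the reverse inclusion I would argue locally, using the defining property of local compatibility. Fix $p \in M$ and let $U$ be a neighborhood of compatibility of $d$ and $g$; by definition $U$ is $d$-open and contains $p$, so there is $\delta > 0$ with $B_d(p,\delta) \subseteq U$. I claim that $B_d(p,\delta') = B_{d_g}(p,\delta')$ for every $0 < \delta' \leq \delta$. The inclusion $\supseteq$ is again \eqref{eqn_d_d_g}; for $\subseteq$, if $d(p,x) < \delta'$ then $x \in B_d(p,\delta) \subseteq U$, and since $d$ and $d_g$ agree on $U \times U$ we get $d_g(p,x) = d(p,x) < \delta'$. Thus the two metrics have exactly the same small balls about $p$. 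Consequently every $d_g$-ball about $p$ contains a $d$-ball about $p$, so every $d_g$-open set is $d$-open, giving the reverse inclusion.

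Combining the two inclusions shows the topologies agree, and the assertion that this common topology is the manifold topology is then just the cited fact about $d_g$. There is essentially no obstacle here; the only points requiring a moment's care are that an abstractly "$d$-open" set $U$ does contain an honest $d$-metric ball about $p$, and the availability of $d \leq d_g$ from \eqref{eqn_d_d_g}. No separate treatment of the boundary is needed, since the background result on $d_g$ inducing the manifold topology already covers $C^0$ manifolds with boundary.
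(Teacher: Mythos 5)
Your proposal is correct and follows essentially the same route as the paper: one inclusion from $d \leq d_g$ (i.e.\ \eqref{eqn_d_d_g}), the other by working inside a neighborhood of compatibility where small $d$-balls and $d_g$-balls coincide, and then invoking the known fact that $d_g$ induces the manifold topology. No gaps.
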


In this proof we'll use $B_d$ and $B_g$ to denote open balls with respect to $d$ and $d_g$, respectively.

\begin{proof}
	First, it is clear that $d_g$ induces the manifold topology, since $M$ admits smooth Riemannian metric $g'$ such that $d_g$ and $d_{g'}$ are uniformly equivalent.

	We show $d$ and $d_g$ induce the same topology. First, let $U \subseteq M$ be $d$-open and nonempty, and let $p \in U$. Let $r>0$ be sufficiently small so that $B_d(p,r)\subseteq U$. Then by \eqref{eqn_d_d_g},  $B_g(p,r) \subseteq B_d(p,r) \subseteq U$. This shows one direction.
	
	Second, let $U \subseteq M$ be $d_g$-open, and let $p \in U$. There exists $r>0$ such that $B_g(p,r) \subseteq U$. By the definition of locally compatible there exists $r'>0$ such that $B_d(p,r')$ is a neighborhood of compatibility of $g$ and $d$. Without loss of generality, we may take $r' \leq r$.  Now if $q \in B_d(p,r')$, then $r'> d(p,q) = d_g(p,q)$, so that $q \in B_g(p,r')$. Then
	$$B_d(p,r') \subseteq B_g(p,r') \subseteq B_g(p,r) \subseteq U,$$
	proving the other direction.
\end{proof}

\subsection{(Pointed) intrinsic flat convergence}
Wenger defined the flat distance \cite{Wen} between two integral $m$-currents $T_1, T_2$ on a complete metric space $(X,d)$ to be
$$d_X^F(T_1,T_2) = \inf_{A,B} \left\{ \M(A) + \M(B) \; \big| \; T_2 - T_1 = A+ \partial B\right\},$$
where $A$ and $B$ are integral currents on $X$ of dimension $m$ and $m+1$, respectively. This is a direct generalization of Whitney's notion of the flat distance between submanifolds \cite{Whi}, which was extended to integral currents by Federer and Fleming \cite{FF}.

Inspired by the Gromov--Hausdorff distance and the flat distance, Sormani and Wenger defined the intrinsic flat distance as follows.

\begin{definition}[\cite{SW}]
Suppose $N_1=(X_1, d_1, T_1)$ and $N_2=(X_2, d_2, T_2)$ are integral current spaces of dimension $m$. The \emph{intrinsic flat distance} between $N_1$ and $N_2$ is defined to be
$$d_\F(M_1, M_2) =\inf \left\{ d_Z^F\left(\varphi_{1\#} (T_1), \varphi_{2\#}(T_2)\right)\right\},$$
where the infimum is taken over all complete metric spaces $(Z,d_Z)$ and isometric embeddings $\varphi_1:X_1 \to Z$, $\varphi_2:X_2 \to Z$.
\end{definition}

It was proved in \cite{SW} that $d_\F$ defines a metric on the space of precompact integral current spaces, modulo current-preserving isometries. If a sequence of (precompact) integral current spaces $N_j$ converges in $d_{\F}$ to an integral current space $N$ as $j \to \infty$, we write $N_j \xrightarrow{\F} N$.

The following result is very useful, allowing one to consider a single fixed ``common space'' in an $\F$-limit.

\begin{thm} [Theorem 4.2 of \cite{SW}]
\label{thm_common_space}
If $N_j = (X_j, d_j, T_j)  \xrightarrow{\F} N=(X,d,T)$ as integral current spaces, then there exists a complete metric space $(Z,d_Z)$ and isometric embeddings
$\varphi_j :(X_j, d_j) \to (Z,d_Z)$ and $\varphi:(X,d) \to (Z,d_Z)$ such that $\varphi_{j\#} T_j \to \varphi_{\#} T$ in $d_Z^F$.
\end{thm}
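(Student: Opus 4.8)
The plan is to reduce the statement to the definition of $d_\F$, by \emph{gluing} together the auxiliary metric spaces that witness $d_\F(N_j,N)\to 0$ along their common copies of $X$, and then to use that the flat distance does not increase under isometric embeddings. First I would unpack the hypothesis: since $d_\F(N_j,N)\to 0$, for each $j$ there exist a complete metric space $(Z_j,d_{Z_j})$ and isometric embeddings $\alpha_j\colon (X_j,d_j)\to Z_j$ and $\beta_j\colon (X,d)\to Z_j$ (extending to isometric embeddings of the completions $\bar X_j$ and $\bar X$) with $d^F_{Z_j}(\alpha_{j\#}T_j,\beta_{j\#}T)<1/j$. Because $\bar X$ is complete, each $\beta_j(\bar X)$ is a closed subset of $Z_j$ isometric to $\bar X$; the task is to build one space in which all of these copies are identified.

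Next I would glue. Let $Z_0$ be the quotient of the disjoint union $\bigsqcup_j Z_j$ obtained by identifying $\beta_j(x)$ with $\beta_k(x)$ for every $x\in\bar X$ and all $j,k$, with natural maps $\iota_j\colon Z_j\to Z_0$. Equip $Z_0$ with the glued (pseudo)metric: $d_{Z_0}(a,b)=d_{Z_j}(a,b)$ if $a,b$ both come from $Z_j$, and $d_{Z_0}(a,b)=\inf_{x\in\bar X}\bigl(d_{Z_j}(a,\beta_j(x))+d_{Z_k}(\beta_k(x),b)\bigr)$ if $a$ comes from $Z_j$ and $b$ from $Z_k$ with $j\neq k$. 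Using only the triangle inequality in each $Z_j$ together with the identity $d_{Z_j}(\beta_j(x),\beta_j(y))=d(x,y)=d_{Z_k}(\beta_k(x),\beta_k(y))$, one checks that $d_{Z_0}$ is well defined and satisfies the triangle inequality, and that a path between two points of a single $Z_j$ that detours through other pieces is never shorter than the direct $Z_j$-distance; hence each $\iota_j$ is an isometric embedding. Since each $\beta_j(\bar X)$ is closed, $d_{Z_0}$ separates points, so it is a genuine metric. Finally let $(Z,d_Z)$ be the metric completion of $Z_0$; the $\iota_j$ extend to isometric embeddings $Z_j\hookrightarrow Z$.

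Then I would assemble the conclusion. Set $\varphi:=\iota_j\circ\beta_j\colon X\to Z$, which is independent of $j$ by the gluing, and $\varphi_j:=\iota_j\circ\alpha_j\colon X_j\to Z$; both are isometric embeddings, as required. Since $\iota_j$ is an isometric embedding, push-forward by $\iota_j$ preserves the mass of integral currents and sends them to integral currents on $Z$ (recall $\|\varphi_\#T\|\le(\Lip(\varphi))^m\|T\|$, with equality for isometries, and that push-forward commutes with $\partial$). Thus, given any filling $\alpha_{j\#}T_j-\beta_{j\#}T=A+\partial B$ on $Z_j$ with $A,B$ integral, we obtain $\varphi_{j\#}T_j-\varphi_\#T=\iota_{j\#}A+\partial(\iota_{j\#}B)$ on $Z$ with $\M(\iota_{j\#}A)=\M(A)$ and $\M(\iota_{j\#}B)=\M(B)$. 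Taking the infimum, $d^F_Z(\varphi_{j\#}T_j,\varphi_\#T)\le d^F_{Z_j}(\alpha_{j\#}T_j,\beta_{j\#}T)<1/j\to 0$, which is the claim.

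The main obstacle is the gluing step: one must verify that the glued object is honestly a metric space, and, more importantly, that the inclusions $Z_j\hookrightarrow Z$ are genuine isometric embeddings and not merely $1$-Lipschitz maps, since the conclusion demands that $\varphi_j$ and $\varphi$ be isometric embeddings. All the metric bookkeeping is driven by the single fact that, after gluing, $\bar X$ sits isometrically inside every $Z_j$ as the same subspace; once that is established, the remaining steps are routine.
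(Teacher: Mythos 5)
The paper gives no proof of this statement—it is quoted as Theorem 4.2 of \cite{SW}—so there is nothing internal to compare against; your gluing argument is, in essence, the standard proof from \cite{SW}: realize each $d_\F(N_j,N)$ up to an error by a space $Z_j$, glue all the $Z_j$ along the common isometric copy of $\bar X$, check that the glued distance is a genuine metric into which each $Z_j$ embeds isometrically, and use that isometric embeddings preserve the mass of fillings so that $d_Z^F(\varphi_{j\#}T_j,\varphi_\# T)\le d_{Z_j}^F(\alpha_{j\#}T_j,\beta_{j\#}T)\to 0$. The metric bookkeeping you flag (triangle inequality through intermediate pieces, no shortcuts between two points of one $Z_j$, positivity via closedness of $\beta_j(\bar X)$) all goes through exactly as you indicate. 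One small quantifier slip: since the infimum defining $d_\F(N_j,N)$ need not be attained and need not be $<1/j$, you should choose $Z_j$ with $d_{Z_j}^F(\alpha_{j\#}T_j,\beta_{j\#}T)<d_\F(N_j,N)+1/j$, which still tends to $0$ and leaves the rest of the argument unchanged.
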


Note that the embeddings $\varphi_j$ and $\varphi$ extend uniquely to embeddings of their completions $\bar X_j$ and $\bar X$; we use the same notation for these maps and their extensions.

If $N_j = (X_j, d_j, T_j)  \xrightarrow{\F} N=(X,d,T)$ and $x_j \in X_j$, Sormani defines \emph{convergence of points} $x_j \to x \in (\bar X, \bar d)$ as the existence of isometric embeddings and $(Z, d_Z)$ as in Theorem \ref{thm_common_space} for which 
\begin{equation}
\label{eqn_points_converge}
\varphi_j(x_j) \to \varphi(x) \text{ in } Z.
\end{equation}

It was also proven in \cite{SW} that if $N_j \xrightarrow{\F} N$, then
\begin{equation}
\label{eqn_M_LSC}
\liminf_{j \to \infty} \M(N_j) \geq \M(N),
\end{equation}
i.e. the mass can only converge or drop in an intrinsic flat limit. (This statement should not be confused with the lower semicontinuity of ``mass'' of another sort in Theorem \ref{thm1}!) It will be important for us to study a stronger type of convergence in which the mass does not drop, also used by Portegies \cite{Por}. Sormani \cite{S16} makes the following convenient definition:

\begin{definition}[\cite{S16}]
Suppose $N_1=(X_1, d_1, T_1)$ and $N_2=(X_2, d_2, T_2)$ are precompact integral current spaces of dimension $m$. The \emph{intrinsic flat volume distance} between $N_1$ and $N_2$ is defined to be
$$d_{\VF}(N_1, N_2) =d_\F(N_1, N_2) + \left|\M(N_2) - \M(N_1)\right|.$$
\end{definition}
Thus, $N_j \xrightarrow{\VF} N$ if and only if $N_j \xrightarrow{\F} N$ and $\M(N_j) \to \M(N)$.

We now define pointed intrinsic flat convergence for local integral current spaces, which is tantamount to $\F$-convergence on balls. See Remark \ref{rmk_pointed} for a comparison with other closely related definitions.

\begin{definition}
\label{def_pointed}
A sequence $N_j=(X_j,d_j,T_j,p_j)$ of complete, pointed local integral current spaces of dimension $m$ converges to a complete, pointed local integral current space $N=(X,d,T,q)$ of dimension $m$ in the \emph{pointed intrinsic flat sense} (respectively, \emph{pointed intrinsic flat volume sense}) if
for any $r_0 > 0$, there exists $r \geq r_0$ such that $N \llcorner B(q,r)$ and $N \llcorner B(p_j,r)$ are precompact integral current spaces (for all $j$ sufficiently large), and $N_j \llcorner B(p_j,r) \xrightarrow{\F} N \llcorner B(q,r)$ (respectively, $N_j \llcorner B(p_j,r) \xrightarrow{\VF} N \llcorner B(q,r)$), and if $p_j \to q$ as in \eqref{eqn_points_converge}.
\end{definition}

Now, let $(M_j, g_j)$ be a sequence of smooth, connected, oriented, complete Riemannian $m$-manifolds (without boundary), inducing metrics $d_{g_j}$ and locally integral $m$-currents  $T_j$ as in \eqref{eqn_canonical}, and let $N_j = (M_j,d_{g_j},T_j)$ be the corresponding local integral current spaces.
Given $p_j \in M_j$ we have that $N_j \llcorner B(p_j,r)$
is a precompact integral current space for almost all $r>0$ by Lemma \ref{lemma_local_IC}. (Although not needed, since $M_j$ is a manifold, this actually holds for all $r>0$ by \cite[Lemma 2.35]{S14}.) In this way, it makes sense to say (abusing notation slightly) that $(M_j, g_j, p_j)$ converges in the pointed $\F$ or $\VF$ sense to an AF local integral current space $N=(X,d,T,q)$.

At this point, we have defined everything in the statement of Theorem \ref{thm1}.

\begin{remark}
\label{rmk_pointed}
Definition \ref{def_pointed} is closely related to convergence in the local flat topology defined by Lang and Wenger \cite{LW} (an extrinsic notion) and convergence in the pointed intrinsic flat distance between local integral current spaces defined by S. Takeuchi \cite[Definition 3.1]{Tak}. Suppose $N_j=(X_j,d_j,T_j,p_j)$ converges to $N=(X,d,T,q)$ (all complete pointed local integral current spaces of the same dimension) in the sense guaranteed by Takeuchi's compactness result \cite[Theorem 1.1]{Tak}. (This theorem is a reinterpretation of Lang and Wenger's extrinsic compactness result, \cite[Theorem 1.1]{LW} via Takeuchi's definition.) Then it is straightforward to check that convergence in the sense of Definition \ref{def_pointed} holds (using \cite[Proposition 3.6]{Tak}, the definition of convergence in the local flat topology, and \eqref{eqn_restrict_pushforward2}). 
%(To elaborate on these ``standard arguments'': Prop 3.6 of [Tak]  gives the existence of a complete LICS $(Z,d_Z, T_Z,)$ and appropriate isometric embeddings $\varphi_j, \varphi$, such that $\varphi_{\#j}T_j$ converges to $\varphi_{\#} T = T_Z$ in the local flat topology, and $\varphi_j(p_j) \to \varphi(q)$. This implies that for all $r > 0$ $\varphi_{\#j}T_j - \varphi_{\#} T$ equals $R_j + \partial S_j$, where $R_j$ and $S_j$ are local integral currents on $Z$, and $\|R_j\|(B(\varphi(q),r)$ and $\|S_j\|(B(\varphi(q),r)$ both converge to 0 as $j \to \infty$.
\end{remark}

\section{Volume, mass measure, and perimeter}
\label{sec_measures}
The purpose of this section to reconcile some Riemannian concepts with their integral current analogs. The first-time reader may prefer to read only the statements of Lemmas \ref{lemma_mass_measure} and \ref{lemma_perimeter_bdry_mass} before proceeding to section \ref{sec_regions}.

We first show that on a $C^0$ Riemannian manifold, the Lebesgue measure agrees with the mass measure $\|T\|$ associated to  the canonical integral current $T$ given by integration, i.e. \eqref{eqn_canonical}. We also give sufficient conditions on a function $u$ so that $\|T \llcorner du\| = \|T\|$ (which will be useful in later calculations involving the Ambrosio--Kirchheim slicing theorem).

\begin{lemma}
\label{lemma_mass_measure}
Suppose $(M,g)$ is a connected, oriented $C^0$ Riemannian manifold, possibly with boundary, inducing Lebesgue measure $\mu_g$. Suppose $(M,d,T)$ is an integral current space such that $T$ is given by (\ref{eqn_canonical}). Suppose that $g$ and $d$ are locally compatible on $M$ (see Definition \ref{def_locally_compatible}).
\begin{enumerate}
\item[(a)]  $\|T\|=\mu_g$ as Borel measures on $M$. In particular, $\M(T) = |M|_g$.
\item[(b)] There exists a universal $\gamma_0>0$  so that if $\gamma \in [0, \gamma_0]$, and if $u$ is any $(1+\gamma)$-Lipschitz function on $(M,d)$ that is $C^1$ except on a closed set $S\subset M$ of measure zero, with $(1+\gamma)^{-1}\leq |\nabla u|_g \leq (1+\gamma)$ on $M \setminus S$, then 
$$(1+\gamma)^{-1} \mu_g \leq \|T \llcorner du\| \leq (1+\gamma) \mu_g$$ 
as Borel measures on $M$. In particular, if $\gamma=0$, then $\|T \rst du\| = \mu_g$.
\end{enumerate}
\end{lemma}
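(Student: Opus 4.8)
The plan is to prove each two-sided estimate by establishing the upper inequality globally, via a pointwise comparison of differential forms, and the lower inequality locally, by producing sufficiently many admissible test tuples near each point and then patching. Throughout, Lipschitz constants are with respect to $d$, and local compatibility is used in the form ``$d=d_g$ near every point'' together with McShane's extension theorem to pass between $d$ and $d_g$.

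First I would dispose of the upper bounds. For (a), given $(f,\pi_1,\dots,\pi_m)\in\mathcal{D}^m$ with $\Lip(\pi_i)\le1$, local compatibility makes each $\pi_i$ locally $1$-Lipschitz for $d_g$, which in a chart forces $|\nabla\pi_i|_g\le1$ almost everywhere (by Rademacher, using that $g$ is uniformly equivalent to the Euclidean metric on any chart). Hadamard's inequality then bounds the density of $d\pi_1\wedge\cdots\wedge d\pi_m$ against $d\mu_g$ by $\prod_i|\nabla\pi_i|_g\le1$, so $|T(f,\pi_1,\dots,\pi_m)|\le\int_M|f|\,d\mu_g$, and minimality of the mass measure gives $\|T\|\le\mu_g$. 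For (b), the upper bound is then immediate from the estimate $\|T\rst du\|\le\Lip(u)\,\|T\|$ noted earlier together with (a), giving $\|T\rst du\|\le(1+\gamma)\mu_g$; in particular $\|T\rst du\|$ is absolutely continuous with respect to $\mu_g$, so the $\mu_g$-null set $S$ can be ignored below.

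For the lower bound in (a), fix $p\in M$ and $\epsilon>0$. Using local compatibility, take a chart centered at $p$ inside a neighborhood where $d=d_g$, with $g_{ij}(p)=\delta_{ij}$; after shrinking, $(1+\epsilon)^{-2}\delta\le g\le(1+\epsilon)^2\delta$ on a coordinate ball $B$. On a sufficiently small concentric ball $B'$, each rescaled coordinate function $(1+\epsilon)^{-1}x^i$ is $1$-Lipschitz for $d_g$: a curve joining two points of $B'$ either stays in $B$, where $g$-length dominates $(1+\epsilon)^{-1}$ times Euclidean length, or leaves $B$, in which case its Euclidean length already exceeds the diameter of $B'$. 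Extend these by McShane's theorem to $1$-Lipschitz functions $\pi_1,\dots,\pi_m$ on $(M,d)$ and evaluate $T$, extended to a bounded Borel first argument, at $(\chi_A,\pi_1,\dots,\pi_m)$ for a Borel set $A\subseteq B'$: this equals $(1+\epsilon)^{-m}\int_A dx^1\wedge\cdots\wedge dx^m$, i.e. $(1+\epsilon)^{-m}$ times a signed copy of the coordinate Lebesgue measure of $A$, whose absolute value is at least $(1+\epsilon)^{-2m}\mu_g(A)$. Hence $\|T\|(A)\ge(1+\epsilon)^{-2m}\mu_g(A)$; covering $M$ by countably many such balls and letting $\epsilon\to0$ gives $\|T\|\ge\mu_g$, so $\|T\|=\mu_g$ and $\M(T)=|M|_g$. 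The lower bound in (b) is entirely analogous but needs one extra idea: fix $p\in M\setminus S$, take a chart at $p$ with $g(p)=\delta$ and rotate the coordinates to align $\nabla u(p)$ with the first coordinate direction (this preserves $g(p)=\delta$), so that $\partial_1u(p)=|\nabla u(p)|_g\ge(1+\gamma)^{-1}$. With $\pi_j=(1+\epsilon)^{-1}x^{j+1}$ ($j=1,\dots,m-1$) as before, the identity $du\wedge dx^2\wedge\cdots\wedge dx^m=(\partial_1u)\,dx^1\wedge\cdots\wedge dx^m$ and the continuity of $\partial_1u$ near $p$ give, for $A$ in a small ball, $\|T\rst du\|(A)\ge(1+\epsilon)^{-(2m-1)}\bigl((1+\gamma)^{-1}-o(1)\bigr)\mu_g(A)$; shrinking the ball, letting $\epsilon\to0$, and covering $M\setminus S$ yields $\|T\rst du\|\ge(1+\gamma)^{-1}\mu_g$. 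The case $\gamma=0$ is the common special case of the two bounds.

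The hard part is the two lower bounds --- converting the infinitesimal picture into a genuine estimate on the mass measure --- and within them the $C^0$, non-length-space nature of the setting. Because $g$ is only continuous there are no geodesic normal coordinates, so one must work in a chart where $g$ is merely $C^0$-close to Euclidean and control curves realizing $d_g$ that may exit the chart; this forces the ``short curve / long curve'' dichotomy used to check that the rescaled coordinate functions are $1$-Lipschitz. And because the ambient distance is $d$, not $d_g$, these locally built test functions must be extended to all of $(M,d)$ without enlarging their Lipschitz constant --- exactly what local compatibility plus McShane extension provides, and the reason the hypothesis relating $d$ and $g$ is needed at all. The restriction $\gamma\le\gamma_0$ appears to be only a technical convenience for keeping these perturbative comparisons clean and plays no essential role in the scheme above.
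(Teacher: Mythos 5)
Your proof is correct, and for the crucial lower bounds it takes a genuinely different route from the paper. The upper bounds coincide with the paper's argument (Lipschitz-to-gradient bound plus Hadamard, and $\|T\rst du\|\le\Lip(u)\|T\|$). For the lower bounds, the paper argues by contradiction: it writes $\|T\|=\rho\,\mu_g$ via Radon--Nikodym, takes the essential infimum of $\rho$, picks a Lebesgue density point of the bad set, and tests against locally constructed near-orthonormal coordinate functions whose Lipschitz constant is a number $\beta$ slightly larger than $1$; the numerical absorption needed to close that contradiction in part (b) is exactly what produces the universal constant $\gamma_0=\sqrt{6/5}-1$. You instead prove the lower bound directly: by rescaling the coordinates by $(1+\epsilon)^{-1}$ and using the short-curve/long-curve dichotomy (with $B'$ of at most half the radius of $B$, so that the portion of an escaping curve inside $B$ is longer than $\operatorname{diam}B'$) you get test functions that are exactly $1$-Lipschitz for $d$ after McShane extension, evaluate the (Borel-extended) current on $(\chi_A,\pi_1,\dots,\pi_m)$, and then cover and sum, letting $\epsilon\to0$; in (b) the rotation aligning $\nabla u(p)$ with $\partial_1$ and continuity of $\partial_1u$ off the closed null set $S$ does the rest. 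This is more elementary (no density-point contradiction), and it buys something concrete: your scheme yields $(1+\gamma)^{-1}\mu_g\le\|T\rst du\|$ for \emph{every} $\gamma\ge0$, confirming your closing remark that the restriction $\gamma\le\gamma_0$ is an artifact of the paper's bookkeeping rather than of the statement. Two small points to make explicit if you write this up: the identity $T(\chi_A,\pi_1,\dots,\pi_m)=\int_A d\pi_1\wedge\cdots\wedge d\pi_m$ for the Borel-extended current should be justified (dominated convergence through Lipschitz approximations of $\chi_A$, using $\|T\|\le\mu_g$ -- the paper makes the same implicit step with $f_r=\chi_{B(p,r)}$), and the ``leaves $B$'' case should refer to the Euclidean length of the portion of the curve inside $B$, which is where the factor controlling $\operatorname{diam}B'$ comes from.
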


An analogous result holds if $(M,d,T)$ is a local integral current space, but we will not need this.

\begin{remark}
Lemma \ref{lemma_mass_measure}(a) is known for smooth Riemannian manifolds (see \cite[Example 2.32]{SW}), from the general results in \cite{AK}. We include a direct proof, which also explicitly accounts for the metric possibly being only $C^0$.
\end{remark}

\begin{proof}
From the definition of local compatibility and the triangle inequality, we see
$$d_g(x,y) \geq d(x,y)$$
for all $x, y \in M$. 
In particular, for any function $\phi:M \to \R$, 
$$\Lip_{d_g} (\phi) \leq \Lip_d(\phi).$$
In the proof below,  ``Lipschitz function'' will refer to a Lipschitz function with respect to $d$ (and hence with respect to $d_g$)

To prove (a), recall that $\|T\|$ is defined to be the smallest Borel measure $\nu$ for which
$$|T(f,\pi_1, \ldots, \pi_m)| \leq \int_M |f| d\nu$$
for all bounded Lipschitz $f$ and all Lipschitz $\pi_i$ with $\Lip_d(\pi_i) \leq 1$. Note that
\begin{align*}
T(f,\pi_1, \ldots, \pi_m) &= \int_M f d\pi_1 \wedge \ldots \wedge d\pi_m\\
&= \int_M f h \Omega_g,
\end{align*}
where $\Omega_g$ is the oriented volume form on $(M,g)$ and $h(x)$ is defined a.e. so that
$$d\pi_1 \wedge \ldots \wedge d\pi_m = h(x) \Omega_g$$
a.e. as elements of $\Lambda^m(T^*M)$.  (Here and below, ``a.e.'' is with respect to $\mu_g$.) Since  $\Lip_{d_g}(\pi_i) \leq \Lip_d(\pi_i) \leq 1$, we have $|d\pi_i|_g \leq 1$ a.e., and hence
 $|h(x)| \leq 1$ a.e. Then
$$\left|T(f,\pi_1, \ldots, \pi_m)\right| \leq \int_M |f| d\mu_g.$$
This implies that $\|T\| \leq \mu_g$, showing one direction.

From $\|T\| \leq \mu_g$, it follows that $\|T\|$ is absolutely continuous with respect to $\mu_g$ as Borel measures; by the Radon--Nikodym Theorem, there exists a nonnegative Borel function $\rho:M \to \R$ such that
\begin{equation}
\|T\|(E) = \int_E \rho d\mu_g \label{eqn_RN}
\end{equation}
for all Borel sets $E \subseteq M$. By the first part of the proof, $\rho \leq 1$ a.e.

Let $1-a$ be the essential infimum of $\rho$ on $M$ (with respect to $\mu_g$), where $a \in [0,1]$. We claim that $a=0$, which implies $\|T\| = \mu_g$. If $a>0$, the set $A$ on which $\rho \leq 1-\frac{a}{2}$ has positive measure with respect to $\mu_g$. By Lebesgue's density theorem, $A$ has density equal to 1 at almost every point of $A$ (with respect to $\mu_g$). Choose a point $p \in A$ with density 1 and choose some $r_0 > 0$ such that
$$\frac{\mu_g(A \cap B(p,r))}{\mu_g(B(p,r))} \geq 1-\frac{a}{6},$$
for all $r \in (0,r_0]$. In particular, for such $r$,
\begin{equation}
\label{eqn_density}
\frac{\mu_g(B(p,r) \setminus A)}{\mu_g(B(p,r))} \leq \frac{a}{6}.
\end{equation}
Let $f_r$ be the characteristic function of $B(p,r)$, a bounded Borel function.

For any choice of $\pi_1, \ldots, \pi_m$ with $\Lip_d(\pi_i) \leq 1$, and $r \in (0,r_0]$,
\begin{align*}
|T(f_r,\pi_1, \ldots, \pi_m)| &\leq \int_{M} f_r d \|T\|\\
&= \int_{B(p,r)} \rho d \mu_g\\
&= \int_{B(p,r) \cap A} \rho d \mu_g+\int_{B(p,r) \setminus A} \rho d \mu_g\\ 
&\leq \left(1-\frac{a}{2}\right) \mu_g(B(p,r) \cap A) + \mu_g(B(p,r) \setminus A)\\
&\leq \left(1-\frac{a}{2}\right) \mu_g(B(p,r)) + \frac{a}{6} \cdot \mu_g(B(p,r)) \\
&=\left(1-\frac{a}{3}\right)\mu_g(B(p,r)),
\end{align*}
having used the definition of $\|T\|$, \eqref{eqn_RN}, \eqref{eqn_density}, and the definition of $A$. 
In particular, for any Lipschitz $\pi_1, \ldots, \pi_m$ and $r \in (0,r_0]$,
\begin{equation}
|T(f_r,\pi_1, \ldots, \pi_m) \leq \left(\prod_{i=1}^m \Lip_d(\pi_i)\right) \left(1-\frac{a}{3}\right)\mu_g(B(p,r)). \label{eqn_Tf}
\end{equation}

On the other hand, let $\omega_1, \ldots, \omega_m$ be an oriented $g$-orthonormal basis of $T_p^* M$ (where $p$ is the same point as chosen above). Then $\omega_1 \wedge \ldots \wedge \omega_m = \Omega_g(p)$. Now, choose $C^1$ functions $\pi_1, \ldots, \pi_m$ on a small neighborhood $W$ of $p$ such that $d\pi_i = \omega_i$ at $p$. Define
$$\beta = \left(\frac{1-\frac{a}{4}}{1-\frac{a}{3}}\right)^{\frac{1}{m}} > 1.$$
We can choose $W$ small enough so that $|d\pi_i|_g < \beta$
on $W$, so that $\Lip_{d_g}(\pi_i) < \beta$ on $W$ as well. By local compatibility, we can shrink $W$ if necessary to arrange that $\Lip_d(\pi_i)< \beta$ on $W$. Now, extend each $\pi_i$ to $M$ so that $\Lip_d(\pi_i)<\beta$ on $M$.  Now, 
$$d \pi_1 \wedge \ldots \wedge d\pi_m = h(x) \Omega_g$$
a.e. as measurable differential $m$-forms on $M$, where $h(x)$ is a function on $M$ (defined a.e.) that is continuous on $W$ (since $g$ is $C^0$ and $\pi_i|_W$ are $C^1$), and $h(p)=1$. Thus, we may choose $r \in (0,r_0]$ so that $h(x) \geq 1 - \frac{a}{4}$ on $B(p,r)$.
Then
\begin{align}
T(f_r,\pi_1, \ldots, \pi_m) &=  \int_{B(p,r)}  d\pi_1 \wedge \ldots \wedge d\pi_m\nonumber\\
&= \int_{B(p,r)} h \Omega_g\nonumber\\
&\geq \left(1-\frac{a}{4}\right)\mu_g(B(p,r)). \label{eqn_Tf2}
\end{align}
Inequalities \eqref{eqn_Tf} and \eqref{eqn_Tf2} contradict if $a>0$, by the choice of $\beta$, proving the first part of the lemma.

For (b), let $u$ be as in the hypothesis, for some $\gamma \geq 0$. Recall that $T \llcorner du$ is the $(m-1)$-current defined by
$$(T \llcorner du)(f,\pi_1, \ldots, \pi_{m-1}) = T(f,u,\pi_1, \ldots, \pi_{m-1}).$$
Since $\Lip_d(u) \leq 1+\gamma$, we have $\|T \llcorner du\| \leq (1+\gamma) \|T\|= (1+\gamma) \mu_g$ by (a). This shows one of the inequalities and also establishes, by the Radon--Nikodym theorem, that $\|T \llcorner du\|$ can be represented by integration of $\rho d\mu_g$ for some function $\rho \leq 1+\gamma$ a.e. In particular, we may neglect points $p \in S$ in the argument below.
For the other inequality,  a slight modification to the proof of (a) is needed. Let $(1+\gamma)^{-1}(1-a)$ be the essential infimum of $\rho$, for some number $a$, and suppose that $a>0$ (or else we are done). Since $a>0$, the set $A$ on which $\rho \leq (1+\gamma)^{-1}(1-\frac{a}{2})$ has positive measure with respect to $\mu_g$. Choose a point $p \in A \setminus S$ and a radius $r_0$ as in \eqref{eqn_density}, and again let $f_r$ be the characteristic function of $B(p,r)$.
 Then similarly to the chain of inequalities \eqref{eqn_Tf}, we have for any $\pi_1, \ldots \pi_{m-1}$ with $\Lip_d(\pi_i) \leq 1$, and any $r \in (0,r_0]$,
\begin{align}
(T \llcorner du)(f, \pi_1, \ldots, \pi_{m-1}) &\leq \int_{B(p,r)} d\|T \llcorner du\|\nonumber\\
&= \int_{B(p,r)} \rho d\mu_g\nonumber\\
&= \int_{B(p,r) \cap A} \rho d\mu_g + \int_{B(p,r) \setminus A} \rho d\mu_g\nonumber\\
&\leq (1+\gamma)^{-1} \left(1-\frac{a}{2}\right) \mu_g(B(p,r)) + \frac{a}{6} (1+\gamma)\mu_g(B(p,r)) \nonumber\\
&\leq (1+\gamma)^{-1} \left(1-\frac{a}{2} + \frac{a}{5}\right) \mu_g(B(p,r)), \label{eqn_Tf3}
\end{align}
provided $(1+\gamma)^2 \leq \frac{6}{5}$. Assume $\gamma \leq \sqrt{\frac{6}{5}}-1 =: \gamma_0$.

Now, take $\omega_1, \ldots, \omega_{m-1} \in T_p^*M$ so that $du(p), \omega_1, \ldots, \omega_{m-1}$ give an oriented $g$-orthogonal basis of $T_p^*M$, with $\omega_1, \ldots, \omega_{m-1}$ of unit length. Let $\pi_1, \ldots, \pi_{m-1}$ be Lipschitz functions on $M$ with $\Lip_d(\pi_i) < \beta$, such that $d\pi_i = \omega_i$ at $p$ and $\pi_1, \ldots, \pi_{m-1}$ are $C^1$ on a neighborhood of $p$ (as explained in (a)), except here we choose 
$$\beta =  \left(\frac{1-\frac{a}{4}}{1-\frac{3a}{10}}\right)^{\frac{1}{m-1}} > 1.$$
 In particular, 
$$du \wedge d \pi_1 \wedge \ldots \wedge d\pi_{m-1} = h(x) \Omega_g$$
a.e. as measurable differential $m$-forms on $M$, where $h(x)$ is continuous at $p$. By construction, $h(p) \geq (1+\gamma)^{-1}$. Thus, we may chose $r \in (0,r_0]$ so that $h(x) \geq (1+\gamma)^{-1}\left(1 - \frac{a}{4}\right)$ on $B(p,r)$. Then as in \eqref{eqn_Tf2}, we have
\begin{align}
(T\llcorner du)(f_r,\pi_1, \ldots, \pi_{m-1}) &= \int_{B(p,r)} du \wedge d\pi_1 \wedge \ldots \wedge d\pi_{m-1}\nonumber\\
 &\geq (1+\gamma)^{-1} \left(1-\frac{a}{4}\right) \mu_g(B(p,r)). \label{eqn_Tf4}
\end{align}
Inequalities  \eqref{eqn_Tf3} and \eqref{eqn_Tf4} contradict if $a>0$ by this choice of $\beta$. Thus $a \leq 0$, and $\rho \geq (1+\gamma)^{-1}$ a.e. In particular, $\|T \rst du\| \geq (1+\gamma)^{-1} \mu_g$, completing the proof of the lemma.
\end{proof}

Next, we state a result that shows the concepts of perimeter and boundary mass are compatible in a $C^0$ Riemannian manifold. This generalization of \cite[Theorem 3.7]{AK} (which is the Euclidean case) will be proved in the appendix, where we also recall the details of how the perimeter is defined in general (and in particular with respect to $C^0$ Riemannian metrics).

\begin{lemma}
\label{lemma_perimeter_bdry_mass}
Let $(M,g)$ be a connected, oriented $C^0$ Riemannian manifold of dimension $m$, possibly with boundary. Suppose $d$ is a complete metric on $M$, locally compatible with $g$, and let $E \subseteq M$ be a precompact Borel set. Let $T_E$ be the integer rectifiable $m$-current on $(M,d)$ given by integration over $E$. Then $\M(\partial T_E)$ is finite if and only if $E$ has finite perimeter with respect to $g$, and in this case, 
$$|\partial^* E|_g = \M(\partial T_E).$$
\end{lemma}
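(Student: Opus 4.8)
The plan is to reduce the statement to the Euclidean case, \cite[Theorem 3.7]{AK}, by working in coordinate charts. Since $\bar E$ is compact, cover a neighborhood of it by finitely many charts $\psi_\alpha\colon U_\alpha\to V_\alpha\subseteq\R^m$; subdividing and shrinking, we may assume each $\psi_\alpha$ is orientation-preserving, that $g$ is uniformly equivalent to the Euclidean metric on $U_\alpha$ (possible since $g$ is $C^0$), and, using local compatibility together with \eqref{eqn_d_d_g}, that $d$ agrees with $d_g$ on $U_\alpha$ and $\psi_\alpha$ is bi-Lipschitz onto $V_\alpha$. Both the $g$-perimeter measure (by its definition, recalled in the appendix) and, when $\partial T_E$ is a current, the mass measure $\|\partial T_E\|$ are Borel measures determined by their restrictions to the $U_\alpha$; using a partition of unity subordinate to $\{U_\alpha\}$ it suffices to compare them chart by chart. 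On each $U_\alpha$ the identity $T_E\rst U_\alpha=T_{E\cap U_\alpha}$ holds, and a locality argument shows that $\partial(T_{E\cap U_\alpha})$ and $(\partial T_E)\rst U_\alpha$ act identically on tuples $(f,\pi_1,\dots,\pi_{m-1})$ with $\spt f\subseteq U_\alpha$: their difference is $(f,\pi)\mapsto T_E(f,\chi_{U_\alpha},\pi)$, which vanishes because $\chi_{U_\alpha}\equiv 1$ on a neighborhood of $\{f\neq0\}$. So it is enough to analyze $\partial(T_{E\cap U_\alpha})$ in each chart.

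Fix a chart, writing $\psi=\psi_\alpha$ and $F=\psi(E\cap U_\alpha)\subseteq\R^m$. Because $\psi$ is an orientation-preserving bi-Lipschitz diffeomorphism, $\psi_\#(T_{E\cap U_\alpha})$ is the Euclidean integration current over $F$ with multiplicity one; since push-forward commutes with $\partial$ (equation \eqref{eqn_bdry_commute}) and $\psi^{-1}$ is Lipschitz, $\partial(T_{E\cap U_\alpha})$ has finite mass if and only if the boundary of that Euclidean current does, which by \cite[Theorem 3.7]{AK} holds precisely when $F$ --- equivalently $E\cap U_\alpha$, finite perimeter being a bi-Lipschitz invariant --- has finite perimeter. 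Running this over the finite cover, together with the locality step above and the fact that a current of locally finite mass restricted to a precompact set has finite mass, gives the first assertion: $\M(\partial T_E)<\infty$ if and only if $E$ has finite $g$-perimeter.

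For the equality of values, assume both quantities are finite; it remains to show $\|\partial T_E\|$ and the $g$-perimeter measure coincide on each $U_\alpha$. The chart map is not an isometry, so I would compute $\|\partial(T_{E\cap U_\alpha})\|$ directly rather than transport it. In the chart, $F$ has finite Euclidean perimeter, so $\chi_F\in BV$ with reduced boundary $\partial^*F$, measure-theoretic inner normal $\nu_F$, and $D\chi_F=\nu_F\,\mathcal{H}^{m-1}\rst\partial^*F$; the structure theorem for sets of finite perimeter then gives, for Lipschitz $f,\pi_1,\dots,\pi_{m-1}$,
$$\partial(T_{E\cap U_\alpha})(f,\pi_1,\dots,\pi_{m-1})=\int_F df\wedge d\pi_1\wedge\cdots\wedge d\pi_{m-1}=\pm\int_{\partial^*F} f\,\bigl(d\pi_1\wedge\cdots\wedge d\pi_{m-1}\bigr)(\tau)\,d\mathcal{H}^{m-1}_g,$$
where the Jacobian factor $\sqrt{\det g}$ from rewriting $dx^1\wedge\cdots\wedge dx^m$ and the factor $g^{ij}$ in the conormal norm combine so that $d\mathcal{H}^{m-1}_g$ is the $g$-area measure on $\partial^*F$ (i.e.\ $|\partial^*E|_g$ restricted there) and $\tau$ is the $g$-unit, oriented, simple $(m-1)$-vector field along the approximate tangent planes of $\partial^*F$. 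Consequently $\|\partial(T_{E\cap U_\alpha})\|$ is absolutely continuous with respect to $\mathcal{H}^{m-1}_g\rst\partial^*F$ with density
$$\sup\bigl\{\,\bigl(d\pi_1\wedge\cdots\wedge d\pi_{m-1}\bigr)(\tau(p))\ :\ \Lip_d(\pi_i)\le 1\,\bigr\}.$$
By local compatibility and $g$ being $C^0$, the constraint $\Lip_d(\pi_i)\le 1$ forces $|d\pi_i|_{g(p)}\le 1$, so this density is $\le 1$ by Hadamard's inequality; it equals $1$ upon choosing $d\pi_i(p)$ to be a $g(p)$-orthonormal coframe of the approximate tangent plane, extended off $p$ with $\Lip_d\le 1+\varepsilon$ and letting $\varepsilon\to0$. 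Hence $\|\partial(T_{E\cap U_\alpha})\|=\mathcal{H}^{m-1}_g\rst\partial^*F$ on $U_\alpha$, and summing over the partition of unity yields $\M(\partial T_E)=|\partial^*E|_g$.

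The step I expect to be the main obstacle is the quantitative one: making rigorous the ``structure theorem in $C^0$-Riemannian coordinates'', i.e.\ verifying that the Euclidean $BV$ structure of $F$, after incorporating the Jacobian $\sqrt{\det g}$ and the metric $g^{ij}$ on conormals, reproduces exactly the $g$-area measure and the $g$-unit tangent $(m-1)$-vector $\tau$, together with the density computation $\sup=1$ under only $C^0$ regularity of $g$ (which is why an orthonormal coframe must be extended with Lipschitz constant $1+\varepsilon$ and a limit taken, rather than $1$). The localization via partition of unity and the locality step are routine but deserve a careful statement, since $\partial T_E$ is not known a priori to be a current.
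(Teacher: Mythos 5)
Your reduction to coordinate charts and your use of \cite[Theorem 3.7]{AK} for the finiteness statement is essentially the paper's strategy (the paper also embeds away a possible boundary of $M$ and works in charts where $d=d_g$), and the localization issues you flag there are real but routine. The genuine gap is in your argument for the \emph{equality} of the two measures. There you propose to compute $\|\partial(T_{E\cap U_\alpha})\|$ directly: represent $\partial T_F$ as integration over $\partial^*F$ against $(d\pi_1\wedge\cdots\wedge d\pi_{m-1})(\tau)\,d\mathcal{H}^{m-1}_g$ and identify the density of $\|\partial T_F\|$ with a pointwise supremum over tuples with $\Lip_d(\pi_i)\le 1$. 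But this identity \emph{is} the content of a $C^0$-Riemannian version of \cite[Theorem 3.7]{AK}, and the steps you would need are not supplied: (i) the mass measure is defined as the minimal dominating measure, and computing it for a current concentrated on the $\mathcal{L}^m$-null rectifiable set $\partial^*F$ requires either Ambrosio--Kirchheim's rectifiable-current machinery (tangential/metric differentials, the mass representation of \cite[Theorem 9.5]{AK}) or a Lebesgue-point argument on $\partial^*F$ in the spirit of Lemma \ref{lemma_mass_measure} --- neither is set up, so ``density $=\sup\{\cdots\}$'' is asserted rather than proved; (ii) the upper bound ``$\Lip_d(\pi_i)\le1$ forces $|d\pi_i|_{g}\le1$'' holds only at points of differentiability, i.e.\ $\mathcal{L}^m$-a.e., which gives no direct control of the tangential differentials that actually enter your boundary integrand $\mathcal{H}^{m-1}$-a.e.\ on the null set $\partial^*F$; (iii) since $g$ is only $C^0$, any Gauss--Green/integration-by-parts route must avoid differentiating the weight $\sqrt{\det g}$, so the displayed ``structure theorem in $C^0$-Riemannian coordinates'' cannot be obtained by naive integration by parts. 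You correctly identify this cluster as the main obstacle, but the fix you sketch (extending a $g(p)$-orthonormal coframe with Lipschitz constant $1+\varepsilon$) only addresses the lower bound for the density, not the identification of the density or the a.e.\ upper bound on $\partial^*F$.

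The paper circumvents all of this with a softer argument that you could adopt wholesale: because $g$ is $C^0$, for every $\varepsilon>0$ one can choose charts in which $d=d_g$, $(1+\epsilon)^{-2}\delta_{ij}\le g_{ij}\le(1+\epsilon)^2\delta_{ij}$, and $d$ is within a factor $1+\epsilon$ of the Euclidean distance. One then quotes the \emph{exact} Euclidean equality of \cite[Theorem 3.7]{AK} in each chart and transfers it using the two scaling facts already established: Lemma \ref{lemma_uniform_equiv} (equivalently Corollary \ref{cor_perimeter}) controls how the variation measure changes under uniformly equivalent metrics, while the mass measure changes only through Lipschitz constants under uniformly equivalent distances. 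This yields $|D\chi_E|_g\le(1+\epsilon)^{2m}\|\partial T_E\|$ and the reverse inequality with the same factor, and letting $\epsilon\to0$ gives the equality --- no representation formula or density computation on $\partial^*F$ is ever needed. I recommend you replace your fixed bi-Lipschitz charts by these $\varepsilon$-almost-isometric ones and keep your localization bookkeeping; as written, the second half of your proposal does not constitute a proof.
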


\section{Convergence of subregions of integral current spaces}
\label{sec_regions}

In this section, we consider precompact integral current spaces $N_j = (X_j,d_j,T_j)$ converging in $\F$ to $N=(X,d,T)$. We prove some general results regarding the $\F$-convergence of regions in $X_j$ to a region in $X$. Specifically, in section \ref{sublevel}, we show that almost all sub-level sets of a Lipschitz function on $X$ are themselves $\F$ limits of regions in $X_j$ (Lemma \ref{lemma_K_A_converge}). In section \ref{balls}, we show that balls and annuli in $X_j$ converge to corresponding balls and annuli in $X$ for almost all radii, provided the base points converge (Lemma \ref{lemma_convergence_balls_annuli}). Lastly, in section \ref{sec_converge_perim}, we apply these results to show that we have (or nearly have) convergence of the boundary masses of many of these regions, under suitable conditions (Propositions \ref{prop_perimeters_converge} and \ref{prop_perimeters_converge2}). These results will be used later in the proof of Theorem \ref{thm1}, and they may have applications to other problems on $\F$-convergence.

\subsection{Convergence of regions defined as sub-level sets}
\label{sublevel}
Fix a Lipschitz function $u:X \to \R$, and define, for any $\delta \in \R$
\begin{equation}
\label{eqn_E_delta}
E^\delta = \{u \leq \delta\} \subseteq X.
\end{equation}
Our goal here is to construct a ``corresponding region'' $E^\delta_j$ in $X_j$ for each $j$ such that $N_j \llcorner E^\delta_j \toF N \llcorner E^\delta$ as $j \to \infty$.

We begin with:
\begin{lemma}
\label{lemma_K_delta}
For almost all $\delta \in \R$,  $T \llcorner E^\delta$ is an integral current on $(\bar X, \bar d)$, and
$$\{u < \delta\} \subseteq \set(T \llcorner E^\delta) \subseteq E^\delta.$$
In particular, $N \rst E^\delta$ is an integral current space for such $\delta$.
\end{lemma}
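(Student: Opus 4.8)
The plan is to apply the Ambrosio--Kirchheim slicing theorem (Theorem \ref{thm_slicing}) to the Lipschitz function $u$ and the integral current $T$ on $\bar X$. Since $N=(X,d,T)$ is an integral current space, $T$ is an integral $m$-current on the completion $(\bar X,\bar d)$; in particular it is normal (it has finite mass and $\partial T$ has finite mass). First I would recall that for an integral current, by \cite[Theorem 5.7]{AK} the slice $\langle T,u,\delta\rangle$ is an integral $(m-1)$-current for almost every $\delta\in\R$, and that by the slicing theorem the restrictions behave well: writing $\langle T,u,\delta\rangle = \partial(T\rst u^{-1}(-\infty,\delta]) - (\partial T)\rst u^{-1}(-\infty,\delta]$, and noting $u^{-1}(-\infty,\delta] = E^\delta$ is a Borel set, we get that $T\rst E^\delta$ is automatically an integer rectifiable $m$-current (restriction of an integer rectifiable current to a Borel set is integer rectifiable, cf.\ the discussion of restriction in section \ref{sec_background2}), and its boundary is $\partial(T\rst E^\delta) = \langle T,u,\delta\rangle + (\partial T)\rst E^\delta$. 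Both terms on the right have finite mass for a.e.\ $\delta$: the slice by the slicing theorem, and $(\partial T)\rst E^\delta$ because $\partial T$ itself has finite mass (restriction to a Borel set does not increase mass). Hence $\partial(T\rst E^\delta)$ has finite mass for a.e.\ $\delta$, so $T\rst E^\delta$ is an integral $m$-current for a.e.\ $\delta$.

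Next I would establish the two-sided inclusion $\{u<\delta\}\subseteq \set(T\rst E^\delta)\subseteq E^\delta$. The right-hand inclusion is immediate: $\|T\rst E^\delta\| = \|T\|\rst E^\delta$ (the mass measure of a Borel restriction is the restriction of the mass measure, cf.\ \cite[Theorem 3.5 / section 2.3]{AK}), which is supported in the closure $\overline{E^\delta}$; but since $u$ is (Lipschitz, hence) continuous on $\bar X$, $E^\delta = \{u\le\delta\}$ is already closed in $\bar X$, so $\set(T\rst E^\delta)\subseteq \overline{E^\delta}\cap X \subseteq E^\delta$ — actually $\set$ is defined via positive lower density of $\|T\rst E^\delta\|$, and a point outside $\overline{E^\delta}$ has a ball on which this measure vanishes. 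For the left-hand inclusion, take $p\in X$ with $u(p)<\delta$. Then there is $r_0>0$ with $B(p,r_0)\subseteq\{u<\delta\}\subseteq E^\delta$, so on all balls $B(p,r)$ with $r\le r_0$ we have $\|T\rst E^\delta\|(B(p,r)) = \|T\|(B(p,r))$. Since $p\in X = \set(T)$, the lower density $\liminf_{r\to0}\|T\|(B(p,r))/r^m>0$, hence the same holds for $\|T\rst E^\delta\|$, i.e.\ $p\in\set(T\rst E^\delta)$. Finally, $\{u<\delta\}$ is nonempty for a.e.\ $\delta$ in the relevant range (or the statement is vacuous), and in any case $\set(T\rst E^\delta)$ being between a relatively open set and its closure, together with $T\rst E^\delta$ integral, shows $N\rst E^\delta$ is a genuine integral current space.

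The main obstacle is a bookkeeping one rather than a conceptual one: making sure that the exceptional null set of ``bad'' $\delta$ is handled cleanly — there are potentially three sources (where $\langle T,u,\delta\rangle$ fails to be a current, where the slicing-theorem mass identity fails, and, if one wants it, where $\{u<\delta\}$ has full measure near the level) — and all must be collected into a single measure-zero set. I would also double-check the identity $\partial(T\rst E^\delta) = \langle T,u,\delta\rangle + (\partial T)\rst E^\delta$ against the sign conventions recorded after Theorem \ref{thm_slicing}: the excerpt gives $\langle T,u,s\rangle = \partial(T\rst u^{-1}(-\infty,s]) - (\partial T)\rst u^{-1}(-\infty,s]$, which rearranges exactly to what is needed with $s=\delta$. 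One subtlety worth a remark: $E^\delta$ as defined is a subset of $X$, not of $\bar X$, so strictly one restricts $T$ to $\{u\le\delta\}$ computed in $\bar X$ (using the Lipschitz extension of $u$ to $\bar X$, which exists and is unique), and then intersects the resulting $\set$ with the ambient space; since $\set(T)=X$, no mass is lost by this. With these checks in place, the lemma follows.
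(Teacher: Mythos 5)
Your proposal is correct and takes essentially the same route as the paper: decompose $\partial(T\rst E^\delta)$ via the slice identity $\partial(T\rst u^{-1}(-\infty,\delta]) = \langle T,u,\delta\rangle + (\partial T)\rst u^{-1}(-\infty,\delta]$, bound the second term by $\M(\partial T)<\infty$ for all $\delta$, conclude finiteness of the slice mass for a.e.\ $\delta$ from the Ambrosio--Kirchheim slicing theorem, and then obtain the two $\set$ inclusions by the lower-density argument. The only cosmetic difference is that the paper disposes of the right-hand inclusion with a one-line ``similar argument,'' whereas you spell it out via closedness of $E^\delta$ and the support of the mass measure; your remark about $E^\delta\subset X$ versus its trace in $\bar X$ is a sound observation and harmless since $\|T\|$ is concentrated on $\set(T)=X$.
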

The statement and proof are direct generalizations of \cite[Lemma 2.34]{S14}, with very minor modifications.
\begin{proof}
First, it is clear that $T \llcorner E^\delta$ is an integer rectifiable current for all $\delta \in \R$, so we only need to show its boundary has finite mass.
From the definition of slice,
\begin{align*}
\partial \left(T \llcorner E^{\delta}\right) &= \langle T ,u,\delta \rangle +(\partial T ) \llcorner u^{-1}(-\infty,\delta].
\end{align*}
But $\M((\partial T) \llcorner u^{-1}(-\infty,\delta]) \leq \M(\partial T) < \infty$ for all $\delta$. By Theorem \ref{thm_slicing},
$$\int_{-\infty}^\infty \M\langle T,u,s\rangle ds \leq \Lip(u)\M(T) < \infty.$$
In particular, $\M\langle T,u,s\rangle$ is finite for almost all $s$. It follows that $\partial \left(T \llcorner E^{\delta}\right)$ has finite mass, and hence $T \llcorner E^\delta$ is an integral current,  for almost all $\delta$.

To prove the second claim, if $x \in \{u < \delta\}$, then by continuity of $u$ there is a ball about $x$ contained in $\{u < \delta\}$, and hence in $E^\delta$. In particular, $\|T \llcorner E^\delta\|$ has the same lower density as $\|T\|$ at $x$, which is positive since $(X,d,T)$ is an integral current space. A similar argument shows the other inclusion.
\end{proof}

For real numbers $\delta_1 < \delta_2$, define the ``annular'' region
\begin{equation}
\label{eqn_A_delta}
A^{\delta_1,\delta_2} = \{\delta_1 \leq u \leq \delta_2\}.
\end{equation}

\begin{lemma}
\label{lemma_annulus}
For almost all $\delta_1 < \delta_2$,  $T \llcorner A^{\delta_1, \delta_2}$ is an integral current on $(\bar X, \bar d)$ and
$$\{ \delta_1 < u < \delta_2\} \subseteq \set\left(T \llcorner A^{\delta_1,\delta_2}\right) \subseteq A^{\delta_1,\delta_2}.$$
In particular, $N \rst A^{\delta_1,\delta_2}$ is an integral current space for such $(\delta_1,\delta_2)$.
\end{lemma}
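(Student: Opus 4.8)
The plan is to reduce Lemma \ref{lemma_annulus} to the already-established Lemma \ref{lemma_K_delta} by writing the annular region as a difference of two sub-level sets. Specifically, for $\delta_1 < \delta_2$ one has $A^{\delta_1,\delta_2} = E^{\delta_2} \setminus \{u < \delta_1\}$, so that $T \llcorner A^{\delta_1,\delta_2} = T \llcorner E^{\delta_2} - T \llcorner \{u < \delta_1\}$. The key observation is that restriction to $\{u < \delta_1\}$ and restriction to $E^{\delta_1} = \{u \le \delta_1\}$ differ only by restriction to the level set $\{u = \delta_1\}$; since $\|T\|$ is a finite measure and the level sets $\{u = \delta\}$ are pairwise disjoint over $\delta \in \R$, only countably many of them can have positive $\|T\|$-measure. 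Hence for almost every $\delta_1$, $\|T\|(\{u = \delta_1\}) = 0$ and $T \llcorner \{u < \delta_1\} = T \llcorner E^{\delta_1}$ as currents. Combining this with Lemma \ref{lemma_K_delta}, for almost every pair $(\delta_1, \delta_2)$ with $\delta_1 < \delta_2$ (using Fubini on the product of two copies of the relevant full-measure sets), both $T \llcorner E^{\delta_1}$ and $T \llcorner E^{\delta_2}$ are integral currents, and therefore so is their difference $T \llcorner A^{\delta_1,\delta_2}$, since integer rectifiability is clear and the boundary $\partial(T \llcorner E^{\delta_2}) - \partial(T \llcorner E^{\delta_1})$ has finite mass.

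For the inclusion $\{\delta_1 < u < \delta_2\} \subseteq \set(T \llcorner A^{\delta_1,\delta_2}) \subseteq A^{\delta_1,\delta_2}$, the argument mirrors the end of the proof of Lemma \ref{lemma_K_delta}. If $x$ satisfies $\delta_1 < u(x) < \delta_2$, then by continuity of $u$ there is an open ball about $x$ contained in $\{\delta_1 < u < \delta_2\} \subseteq A^{\delta_1,\delta_2}$, so on a neighborhood of $x$ the measure $\|T \llcorner A^{\delta_1,\delta_2}\|$ coincides with $\|T\|$, which has positive lower $m$-density at $x$ because $(X,d,T)$ is an integral current space with $\set(T) = X$; hence $x \in \set(T \llcorner A^{\delta_1,\delta_2})$. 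Conversely, if $x \notin A^{\delta_1,\delta_2}$, i.e. $u(x) < \delta_1$ or $u(x) > \delta_2$, then again by continuity there is an open ball about $x$ disjoint from $A^{\delta_1,\delta_2}$, on which $\|T \llcorner A^{\delta_1,\delta_2}\|$ vanishes, so $x \notin \set(T \llcorner A^{\delta_1,\delta_2})$. The final sentence, that $N \rst A^{\delta_1,\delta_2}$ is an integral current space for such $(\delta_1,\delta_2)$, is then immediate from the definition of $N \rst A$ for Borel $A$ with $T \rst A$ an integral current.

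I expect the only genuine subtlety to be the measure-zero bookkeeping: ensuring that the set of $(\delta_1,\delta_2)$ for which everything works is conull in $\{(\delta_1,\delta_2) : \delta_1 < \delta_2\}$. This is handled by noting that the set of bad $\delta$ (where $T \llcorner E^\delta$ fails to be integral, or $\|T\|(\{u=\delta\}) > 0$, or $\M\langle T, u, \delta\rangle = \infty$) is a Lebesgue-null subset of $\R$ by Lemma \ref{lemma_K_delta}, the countability of atoms of the pushforward measure $u_\#\|T\|$, and Theorem \ref{thm_slicing}; then a bad pair must have at least one bad coordinate, and a standard Fubini argument shows the set of such pairs is null in the plane. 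Everything else is a routine translation of the sub-level set case, and no new ideas beyond Lemma \ref{lemma_K_delta} and the slicing theorem are needed.
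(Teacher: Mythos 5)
Your proof is correct, and it rests on the same basic decomposition as the paper's, namely $T \llcorner \{u\le\delta_2\} = T\llcorner\{u<\delta_1\} + T\llcorner A^{\delta_1,\delta_2}$, but you handle the open sub-level set at $\delta_1$ by a different device. The paper computes $\partial\bigl(T\llcorner A^{\delta_1,\delta_2}\bigr)$ directly, writing $\partial\bigl(T\llcorner u^{-1}(-\infty,\delta_1)\bigr)$ in terms of the slice $\langle T,-u,-\delta_1\rangle$ and using finiteness of slice masses for almost every value (Theorem \ref{thm_slicing}) together with $\M(\partial T)<\infty$. You instead observe that the pushforward measure $u_\#\|T\|$ has at most countably many atoms, so for almost every $\delta_1$ the level set $\{u=\delta_1\}$ is $\|T\|$-null and hence $T\llcorner\{u<\delta_1\} = T\llcorner E^{\delta_1}$ as currents; this lets you quote Lemma \ref{lemma_K_delta} twice and exhibit $T\llcorner A^{\delta_1,\delta_2}$ as a difference of two integral currents (integer rectifiability of the restriction being clear, and the boundary having finite mass as a difference of finite-mass boundaries), with no further appeal to the slicing theorem. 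Both routes are sound: yours is slightly more self-contained, using Lemma \ref{lemma_K_delta} as a black box and never touching the $\langle T,-u,-\delta_1\rangle$ identity, while the paper's argument does not need the level set at $\delta_1$ to be $\|T\|$-null and works directly for any pair at which the relevant slices have finite mass. Your verification of the inclusions $\{\delta_1<u<\delta_2\}\subseteq\set\bigl(T\llcorner A^{\delta_1,\delta_2}\bigr)\subseteq A^{\delta_1,\delta_2}$ and your Fubini bookkeeping for the null set of bad pairs match what the paper describes as a straightforward adaptation of Lemma \ref{lemma_K_delta}, and they are fine.
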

\begin{proof}
Since
$$T \llcorner \{ u \leq \delta_2\} = T \llcorner \{ u <  \delta_1\} + T \llcorner A^{\delta_1, \delta_2},$$
it follows that
\begin{align*}
\partial \left(T \llcorner A^{\delta_1,\delta_2}\right) &= \partial \left( T \llcorner u^{-1}(-\infty, \delta_2]\right) -  \partial \left( T \llcorner u^{-1}(-\infty, \delta_1)\right)\\
&= \langle T, u, \delta_2\rangle + (\partial T) \llcorner u^{-1}(-\infty, \delta_2] - \langle T, - u, -\delta_1 \rangle - (\partial T) \llcorner u^{-1}(-\infty, \delta_1).
\end{align*}
The mass of $(\partial T) \llcorner u^{-1}(-\infty, \delta_2] -  (\partial T) \llcorner u^{-1}(-\infty, \delta_1)$ is bounded above by $\M(\partial T) < \infty$, and in the proof of the Lemma  \ref{lemma_K_delta} it was was shown that the slices $\langle T, u, s\rangle$ have finite mass for almost all $s$. The first  claim follows, and the second is a straightforward adaptation of the corresponding argument in the proof of Lemma \ref{lemma_K_delta}.
\end{proof}

We now proceed to construct the regions $E^\delta_j$ in $X_j$ that will $\F$-converge to $E^\delta$ for almost all $\delta$ (and the regions $A^{\delta_1,\delta_2}_j$ that will $\F$-converge to $A^{\delta_1, \delta_2}$ for almost all $\delta_1 <\delta_2$).
By Theorem \ref{thm_common_space}, there exists a complete metric space $(Z,d_Z)$ 
and isometric embeddings $\varphi_j$ of $(X_j,d_j)$ into $Z$ and $\varphi$ of $(X,d)$ into $Z$, such that
the pushed-forward currents converge in the flat sense in $Z$:
\begin{equation}
\label{eqn_dZF}
\varphi_{j\#} (T_j) \to \varphi_{\#} (T) \text{ in } d_Z^F.
\end{equation}
Let $U:Z \to \R$ be a Lipschitz extension of $u \circ \varphi^{-1}:\varphi(X)\subset Z \to \R$, with $\Lip(U) = \Lip(u)$. To be concrete, define
\begin{equation}
\label{eqn_U}
U(z) = \inf_{y \in \varphi(X)} \left\{u \circ \varphi^{-1}(y)+\Lip(u)d_Z(z,y)\right\}.
\end{equation}
Let 
\begin{equation}
\label{eqn_u_j}
u_j = U \circ \varphi_j,
\end{equation}
a Lipschitz function on $(X_j,d_j)$ with $\Lip(u_j) \leq \Lip(u)$. 
As in \eqref{eqn_E_delta} and \eqref{eqn_A_delta}, define
\begin{equation}
\label{eqn_E_j_delta}
E^\delta_j = \{u_j \leq \delta\} \subseteq X_j
\end{equation}
for $\delta\in \R$, and
$$A_j^{\delta_1,\delta_2} = \{\delta_1 \leq u_j \leq \delta_2\} \subseteq X_j.$$
for $\delta_1 < \delta_2$. 

The following is an immediate consequence of Lemmas~\ref{lemma_K_delta} and~\ref{lemma_annulus}.
\begin{cor} 
$\;$
\begin{enumerate}
\item[(a)] For almost all $\delta \in \R$, $T_j \llcorner E^\delta_j$ is an integral current on $(\bar X_j, \bar d_j)$ for all $j$. Moreover,
$$\{u_j < \delta\} \subseteq \set(T_j \llcorner E^\delta_j) \subseteq E^\delta_j.$$
In particular, for such $\delta$, $N_j \rst E^\delta_j$ is an integral current space for all $j$.
\item[(b)] For almost all pairs $(\delta_1, \delta_2)$ with  $\delta_1 < \delta_2$, $T_j \llcorner A_j^{\delta_1, \delta_2}$ is an integral current on $(\bar X_j, \bar d_j)$ for all $j$.
Moreover,
$$\{\delta_2 < u_j < \delta_1\} \subseteq \set(T_j \llcorner A_j^{\delta_1,\delta_2}) \subseteq A_j^{\delta_1,\delta_2}.$$
In particular, for such $(\delta_1, \delta_2)$, $N_j \rst A_j^{\delta_1,\delta_2}$ is an integral current space for all $j$.
\end{enumerate}
\end{cor}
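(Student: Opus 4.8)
The plan is to observe that Lemmas~\ref{lemma_K_delta} and~\ref{lemma_annulus}, although stated for the limit space $N=(X,d,T)$, use nothing about $N$ beyond the facts that $T$ is an integral current (in particular $\partial T$ has finite mass) and that one slices by a fixed Lipschitz function; hence they apply verbatim to each integral current space $N_j=(X_j,d_j,T_j)$ paired with the Lipschitz function $u_j:X_j\to\R$. Note that the $\F$-convergence and the common space $Z$ play no direct role here; they enter only insofar as \eqref{eqn_u_j} records that $u_j=U\circ\varphi_j$ is a genuine Lipschitz function on $X_j$ with $\Lip(u_j)\le\Lip(u)<\infty$. The only content of the corollary beyond the two lemmas is that the exceptional set of level values can be chosen independently of $j$, and for that I would invoke the standard fact that a countable union of Lebesgue-null sets is null.

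For part (a), first fix $j$ and apply Lemma~\ref{lemma_K_delta} to $N_j$ and $u_j$: this produces a Lebesgue-null set $Z_j\subseteq\R$ such that for every $\delta\in\R\setminus Z_j$, the current $T_j\llcorner E_j^\delta$ is an integral current on $(\bar X_j,\bar d_j)$ with
$$\{u_j<\delta\}\subseteq\set(T_j\llcorner E_j^\delta)\subseteq E_j^\delta,$$
so that $N_j\llcorner E_j^\delta$ is an integral current space. I would then set $Z:=\bigcup_{j=1}^\infty Z_j$, which is null, and conclude that for every $\delta\in\R\setminus Z$ all of the stated conclusions hold for all $j$ at once. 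Part (b) is handled identically: applying Lemma~\ref{lemma_annulus} to $N_j$ and $u_j$ for each $j$ yields a set $W_j$, null with respect to two-dimensional Lebesgue measure in the half-plane $\{(\delta_1,\delta_2):\delta_1<\delta_2\}$, outside of which the conclusion holds for that $j$; taking $W:=\bigcup_{j=1}^\infty W_j$, still null, gives (b).

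The ``main obstacle'' is essentially bookkeeping rather than mathematics: one must be explicit that the two lemmas transfer from the limit space to each $N_j$ (rather than only to $N$), after which the countable-union step is immediate. This is consistent with the excerpt's description of the corollary as an immediate consequence of Lemmas~\ref{lemma_K_delta} and~\ref{lemma_annulus}, so I would keep the write-up to just these few sentences.
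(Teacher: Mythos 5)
Your proposal is correct and matches the paper's intent: the paper gives no separate proof, calling the corollary an immediate consequence of Lemmas \ref{lemma_K_delta} and \ref{lemma_annulus}, and the content you supply — that these lemmas apply to each $(N_j,u_j)$ since they use only that $T_j$ is an integral current and $u_j$ is Lipschitz, plus the countable union of null sets to make the exceptional set of $\delta$ (resp.\ $(\delta_1,\delta_2)$) independent of $j$ — is exactly the bookkeeping the paper leaves implicit.
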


%\begin{proof}
%Let $P_j \subset \R$ be the set of values $\delta$ for which $T_j \llcorner E^\delta_j$ is not an integral current. By Lemma \ref{lemma_K_delta}, $P_j$ has measure zero, and hence $\cup_j P_j$ has measure zero. This proves the first claim in (a), and the second follows from Lemma \ref{lemma_K_delta}. The proof of (b) follows very similarly.
%\end{proof}

\begin{lemma} 
\label{lemma_K_A_converge}
Upon passing to a subsequence:
$\;$
\begin{enumerate}
\item [(a)] For almost all $\delta$, $\varphi_{j\#}(T_j \llcorner E^\delta_j)$ converges in the flat sense in $Z$ to $\varphi_{\#}(T \llcorner E^\delta)$ as $j \to \infty$. In particular, $N_j \llcorner E^\delta_j \toF N \llcorner E^\delta$.
\item [(b)] For almost all $ \delta_1 < \delta_2$,  $\varphi_{j\#}(T_j \llcorner A^{\delta_1,\delta_2}_j)$ converges in the flat sense in $Z$ to $\varphi_{\#}(T \llcorner A^{\delta_1,\delta_2})$  as $j \to \infty$.  In particular, $N_j \llcorner A^{\delta_1,\delta_2}_j \toF N \llcorner A^{\delta_1,\delta_2}$.
\end{enumerate}
\end{lemma}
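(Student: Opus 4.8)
The plan is to deduce the claim from the flat convergence $\varphi_{j\#}(T_j)\to\varphi_{\#}(T)$ in $Z$ already set up above, by combining restriction with the Ambrosio--Kirchheim slicing theorem. Write $W^\delta=U^{-1}(-\infty,\delta]\subseteq Z$. Since $U\circ\varphi_j=u_j$ and $U\circ\varphi=u$, equation \eqref{eqn_restrict_pushforward2} gives, for every $\delta$,
\[ \varphi_{j\#}(T_j\rst E^\delta_j)=(\varphi_{j\#}T_j)\rst W^\delta,\qquad \varphi_{\#}(T\rst E^\delta)=(\varphi_{\#}T)\rst W^\delta . \]
So (a) reduces to showing that for almost all $\delta$ one has $(\varphi_{j\#}T_j)\rst W^\delta\to(\varphi_{\#}T)\rst W^\delta$ in $d_Z^F$; the last sentence of (a) then follows at once, since $\varphi_j,\varphi$ are isometric embeddings and hence $d_\F(N_j\rst E^\delta_j,\,N\rst E^\delta)\le d_Z^F\big((\varphi_{j\#}T_j)\rst W^\delta,\,(\varphi_{\#}T)\rst W^\delta\big)$.

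To prove this reduced statement, use the definition of $d_Z^F$ to pick, for each $j$, integral currents $A_j$ (dimension $m$) and $B_j$ (dimension $m+1$) on $Z$ with $\varphi_{j\#}T_j-\varphi_{\#}T=A_j+\partial B_j$ and $m_j:=\M(A_j)+\M(B_j)\to0$. Restricting to $W^\delta$ and using the slice identity $(\partial B_j)\rst W^\delta=\partial(B_j\rst W^\delta)-\langle B_j,U,\delta\rangle$, one obtains
\[ (\varphi_{j\#}T_j)\rst W^\delta-(\varphi_{\#}T)\rst W^\delta=\big(A_j\rst W^\delta-\langle B_j,U,\delta\rangle\big)+\partial\big(B_j\rst W^\delta\big), \]
and therefore
\[ d_Z^F\big((\varphi_{j\#}T_j)\rst W^\delta,\,(\varphi_{\#}T)\rst W^\delta\big)\le m_j+\M\langle B_j,U,\delta\rangle . \]
For all but countably many $\delta$ (for each fixed $j$), each of the currents $A_j\rst W^\delta$, $B_j\rst W^\delta$, $\langle B_j,U,\delta\rangle$, $(\varphi_{j\#}T_j)\rst W^\delta$, $(\varphi_{\#}T)\rst W^\delta$ is a genuine integral current, which legitimizes the estimate above; this is checked exactly as in the proof of Lemma \ref{lemma_K_delta}, using that $\partial A_j$ and $\partial B_j$ have finite mass and that $\langle A_j,U,s\rangle$, $\langle B_j,U,s\rangle$ have finite mass for a.e.\ $s$ by Theorem \ref{thm_slicing}.

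It remains to make the term $\M\langle B_j,U,\delta\rangle$ small for a.e.\ $\delta$. By Theorem \ref{thm_slicing} and $\|B_j\rst dU\|\le\Lip(U)\|B_j\|$,
\[ \int_{\R}\M\langle B_j,U,s\rangle\,ds=\M(B_j\rst dU)\le\Lip(U)\,\M(B_j)\longrightarrow 0 , \]
so the nonnegative functions $s\mapsto\M\langle B_j,U,s\rangle$ tend to $0$ in $L^1(\R)$; after passing to a subsequence they tend to $0$ for a.e.\ $s$. Intersecting that full-measure set with the (countably many) full-measure sets from the previous paragraph, we conclude that along this subsequence $d_Z^F\big((\varphi_{j\#}T_j)\rst W^\delta,\,(\varphi_{\#}T)\rst W^\delta\big)\le m_j+\M\langle B_j,U,\delta\rangle\to0$ for a.e.\ $\delta$, which is (a). For (b): for a.e.\ pair $\delta_1<\delta_2$ the finite measures $\|T\|$ and all the $\|T_j\|$ give zero mass to the level sets $\{u=\delta_1\}$ and $\{u_j=\delta_1\}$, whence $T\rst A^{\delta_1,\delta_2}=T\rst E^{\delta_2}-T\rst E^{\delta_1}$ and likewise on each $X_j$; applying (a) at $\delta_1$ and at $\delta_2$ and subtracting (using the triangle inequality for $d_Z^F$) yields $\varphi_{j\#}(T_j\rst A^{\delta_1,\delta_2}_j)\to\varphi_{\#}(T\rst A^{\delta_1,\delta_2})$, hence $N_j\rst A^{\delta_1,\delta_2}_j\toF N\rst A^{\delta_1,\delta_2}$.

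The only genuinely delicate point is the handling of the ``for almost all $\delta$'' clauses: one needs a \emph{single} subsequence along which $\M\langle B_j,U,\delta\rangle\to0$ simultaneously for a.e.\ $\delta$ (this is the standard passage from $L^1$-convergence to a.e.-convergence along a subsequence), and the exceptional null set of $\delta$'s must also absorb the countably many values at which some restriction or slice of $A_j$ or $B_j$ fails to be an integral current. This is exactly the reason the lemma is phrased ``upon passing to a subsequence'' and ``for almost all $\delta$''; beyond this bookkeeping there is no real obstacle.
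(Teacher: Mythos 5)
Your part (a) is essentially the paper's own proof: the same flat-convergence decomposition $\varphi_{j\#}T_j-\varphi_{\#}T=C_j+\partial B_j$, the same identification of $\varphi_{j\#}(T_j\rst E^\delta_j)$ and $\varphi_{\#}(T\rst E^\delta)$ as restrictions to $U^{-1}(-\infty,\delta]$ via \eqref{eqn_restrict_pushforward2}, the same bound $d_Z^F\le \M(C_j)+\M(B_j)+\M\langle B_j,U,\delta\rangle$ coming from the slice identity, and the same passage from $\int_\R \M\langle B_j,U,s\rangle\,ds\le \Lip(U)\M(B_j)\to 0$ to pointwise a.e.\ convergence along a single subsequence. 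Part (b) is where you deviate mildly: the paper simply reruns the same argument with $U^{-1}(-\infty,\delta]$ replaced by $U^{-1}[\delta_1,\delta_2]$, whereas you deduce (b) from (a) by writing $T\rst A^{\delta_1,\delta_2}=T\rst E^{\delta_2}-T\rst E^{\delta_1}$ (and likewise for each $T_j$) for all but countably many $\delta_1$ --- those whose level sets are null for $\|T\|$ and all $\|T_j\|$, which is legitimate since each finite measure can charge at most countably many of the disjoint level sets and a countable union of countable sets is countable --- and then using subadditivity of $d_Z^F$ under sums of currents. This is correct, and it spares you re-verifying the slicing estimate on the annular sets at the cost of the (easy) null-level-set observation; the two routes are otherwise of equal strength. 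One small slip worth fixing: in (a) you assert that, for fixed $j$, the restrictions and slices of $B_j$, $C_j$ (and of the pushed-forward currents) are integral currents for all but countably many $\delta$; the slicing theorem and the argument of Lemma \ref{lemma_K_delta} only give this for almost every $\delta$, not off a countable set. Since you only ever intersect countably many full-measure sets of $\delta$'s, replacing ``all but countably many'' by ``almost every'' changes nothing else in the argument.
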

\noindent Note the subsequence does not dependent on $\delta$ (or $\delta_1$ and $\delta_2$).

\begin{proof}
We prove (a) by mimicking the proof  of \cite[Lemma 4.1]{S14}. By \eqref{eqn_dZF} and the definition of flat convergence, there exist integral currents $B_j$ and $C_j$ on $Z$ so that
\begin{equation}
\label{eqn_flat_converge}
\varphi_{j\#} (T_j ) - \varphi_{\#} (T) = \partial B_j + C_j
\end{equation}
and $\mathbb{M}( B_j)+\mathbb{M}(C_j) \to 0$. Next, from \eqref{eqn_restrict_pushforward2},
\begin{align*}
\varphi_{j\#} (T_j) \llcorner U^{-1}(-\infty, \delta] &=\varphi_{j\#}(T_j \llcorner E^\delta_j), \text{ and}\\
\varphi_{\#} (T) \llcorner U^{-1}(-\infty, \delta] &= \varphi_{\#}(T \llcorner E^\delta),
\end{align*}
for any $\delta \in \R$. Consider now $\delta \in \R$ for which $T \llcorner E^\delta$ and $T_j \llcorner E^\delta_j$ are integral currents for all $j$ (which holds for almost all $\delta$ by Lemma \ref{lemma_K_A_converge}). Take the difference and use \eqref{eqn_flat_converge}:
\begin{align*}
\varphi_{j\#}(T_j \llcorner E^\delta_j) - \varphi_{\#}(T \llcorner E^\delta) &=\left(\varphi_{j\#} (T_j) - \varphi_{\#} (T) \right) \llcorner U^{-1}(-\infty, \delta]\\ 
&=  (\partial B_j)  \llcorner U^{-1}(-\infty, \delta]  + C_j  \llcorner U^{-1}(-\infty, \delta]\\
&= -\langle B_j, U, \delta \rangle + \partial (B_j \llcorner U^{-1}(-\infty, \delta]) + C_j  \llcorner U^{-1}(-\infty, \delta].
\end{align*}
In particular, this provides an upper bound for the flat distance in $Z$:
$$d_Z^F (\varphi_{j\#}(T_j \llcorner E^\delta_j), \varphi_{\#}(T \llcorner E^\delta)) \leq f_j(\delta) + \mathbb{M}(B_j) + \mathbb{M}(C_j),$$
where
$$f_j(\delta) = \mathbb{M}\langle B_j, U, \delta \rangle \geq 0.$$
By Theorem \ref{thm_slicing} $\int_{-\infty}^\infty f_j(\delta)d\delta \leq \Lip(U)\mathbb{M}(B_j),$ which converges to zero as $j \to \infty$. Thus, there is a subsequence of $f_j(\delta)$ that converges to zero pointwise a.e. This shows (a).

For (b), the proof is similar to that of (a), except  $E^\delta_j$ and $E^\delta$ are replaced with $A_j^{\delta_1, \delta_2}$ and $A^{\delta_1, \delta_2}$, and  $U^{-1}(-\infty,\delta]$ is replaced with $U^{-1}[\delta_1, \delta_2]$.  
\end{proof}

Now, if we further assume $N_j \to N$ in $\VF$, then the masses of the regions subconverge as well:

\begin{lemma} Suppose $N_j \toVF N$. Upon passing to a subsequence:
\label{lemma_volumes}
\begin{enumerate} 
\item[(a)] for almost all $\delta \in \R$, $\M(T_j \llcorner E^\delta_j) \to \mathbb{M}(T \llcorner E^\delta)$ as $j \to \infty$, and 
\item[(b)] for almost all $ \delta_1 < \delta_2$, $\M(T_j \llcorner A_j^{\delta_1,\delta_2}) \to \mathbb{M}(T \llcorner A^{\delta_1,\delta_2})$ as $j \to \infty$.
\end{enumerate}
\end{lemma}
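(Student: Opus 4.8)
The plan is to combine the flat-convergence of the restricted currents (Lemma~\ref{lemma_K_A_converge}) with the global mass convergence $\M(T_j) \to \M(T)$ coming from $\VF$-convergence, using the lower semicontinuity of mass under flat convergence \eqref{eqn_M_LSC} together with a complementary-region argument. First I would fix attention on a value $\delta$ in the full-measure set for which all of the following hold simultaneously: $T \llcorner E^\delta$ and $T \llcorner \{u > \delta\}$ are integral currents, $T_j \llcorner E^\delta_j$ and $T_j \llcorner \{u_j > \delta\}$ are integral currents for all $j$, $\|T\|(\{u = \delta\}) = 0$, and (after passing to the subsequence of Lemma~\ref{lemma_K_A_converge}) $\varphi_{j\#}(T_j \llcorner E^\delta_j) \to \varphi_{\#}(T \llcorner E^\delta)$ in $d_Z^F$. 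All these are full-measure conditions on $\delta$: the first two by Lemmas~\ref{lemma_K_delta} and~\ref{lemma_annulus} (applied to $u$ and to $-u$, and to $u_j$ and $-u_j$), the third because $\|T\|$ is a finite measure so only countably many level sets carry positive mass, the fourth by Lemma~\ref{lemma_K_A_converge}.

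The core of the argument is then the following. For such a $\delta$, the complementary region $X_j \setminus E^\delta_j = \{u_j > \delta\}$ also has its pushforward converging flatly, since $\varphi_{j\#}(T_j \llcorner \{u_j > \delta\}) = \varphi_{j\#}(T_j) - \varphi_{j\#}(T_j \llcorner E^\delta_j)$, and both terms on the right converge flatly in $Z$ (the first by \eqref{eqn_dZF}, the second by Lemma~\ref{lemma_K_A_converge}(a)); hence $\varphi_{j\#}(T_j \llcorner \{u_j > \delta\}) \to \varphi_{\#}(T) - \varphi_{\#}(T \llcorner E^\delta) = \varphi_{\#}(T \llcorner \{u > \delta\})$ in $d_Z^F$. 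Now apply the lower semicontinuity of mass under flat convergence (the inequality \eqref{eqn_M_LSC}, or directly \cite[Theorem 4.3]{SW}) to each of these two convergent sequences, noting that mass is preserved under the isometric pushforwards $\varphi_j$ and $\varphi$:
\begin{align*}
\liminf_{j \to \infty} \M(T_j \llcorner E^\delta_j) &\geq \M(T \llcorner E^\delta),\\
\liminf_{j \to \infty} \M(T_j \llcorner \{u_j > \delta\}) &\geq \M(T \llcorner \{u > \delta\}).
\end{align*}
Adding these and using that $\M(T_j \llcorner E^\delta_j) + \M(T_j \llcorner \{u_j > \delta\}) = \M(T_j) \to \M(T) = \M(T \llcorner E^\delta) + \M(T \llcorner \{u > \delta\})$ (the additivity holding because $\|T_j\|$ and $\|T\|$ are measures, with the level sets $\{u_j = \delta\}$ contributing nothing to $T_j \llcorner$ and $\{u=\delta\}$ contributing nothing to $T \llcorner$ by the choice of $\delta$ and $\|T\|(\{u=\delta\})=0$ — one must also check $\|T_j\|(\{u_j=\delta\})$ is negligible, which follows from the Ambrosio--Kirchheim slicing theorem applied to $T_j$ for a.e.\ $\delta$, or is subsumed by using $\{u_j \le \delta\}$ vs $\{u_j > \delta\}$ which partition $X_j$ exactly), a standard $\liminf$ argument forces equality: if either $\liminf$ were strict, summing would contradict the limit of the sum. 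Hence $\M(T_j \llcorner E^\delta_j) \to \M(T \llcorner E^\delta)$, proving (a). Part (b) is identical, writing $A^{\delta_1,\delta_2} = E^{\delta_2} \setminus \{u < \delta_1\}$ and its complement $\{u < \delta_1\} \cup \{u > \delta_2\}$, applying the same three-way split: $\M(T_j \llcorner \{u_j < \delta_1\}) + \M(T_j \llcorner A_j^{\delta_1,\delta_2}) + \M(T_j \llcorner \{u_j > \delta_2\}) = \M(T_j)$, lower semicontinuity on each of the three convergent pieces (all convergent by Lemma~\ref{lemma_K_A_converge} and subtraction), and the $\liminf$-forcing-equality trick.

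The step I expect to be the main obstacle — or at least the one requiring the most care — is the exact additivity of masses across the partition, i.e.\ ensuring no mass is "lost" on the level sets $\{u_j = \delta\}$ in the limit. On the $X_j$ side one genuinely partitions into $\{u_j \le \delta\}$ and $\{u_j > \delta\}$, so $\M(T_j \llcorner E^\delta_j) + \M(T_j \llcorner \{u_j > \delta\}) = \M(T_j)$ holds for every $j$ with no exceptional set, which is what makes the argument clean; the only place $\delta$ must avoid a measure-zero set is to guarantee that the pieces are genuine integral currents (Lemma~\ref{lemma_K_delta} and its analogue for $-u_j$) and that the flat limits are as claimed (Lemma~\ref{lemma_K_A_converge}). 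On the limit side one needs $\M(T \llcorner E^\delta) + \M(T \llcorner \{u > \delta\}) = \M(T)$, which requires $\|T\|(\{u = \delta\}) = 0$; since $\|T\|$ is finite this fails for at most countably many $\delta$, so it is harmless. The rest is bookkeeping: tracking that all the relevant full-measure exceptional sets are countably many intersected/unioned hence still full-measure, and that the subsequence from Lemma~\ref{lemma_K_A_converge} is the one along which we work (it is independent of $\delta$, as noted there, so it serves for all $\delta$ at once).
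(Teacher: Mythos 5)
Your argument is correct and is essentially the paper's own proof: the paper packages precisely this complement-plus-lower-semicontinuity-plus-total-mass-convergence mechanism as a general lemma (Lemma \ref{lemma_volumes_general}) and applies it to the regions furnished by Lemma \ref{lemma_K_A_converge}, which is exactly what you do inline. The only differences are cosmetic and slightly redundant on your side: since $\{u \leq \delta\}$ and $\{u > \delta\}$ (respectively, the annulus and its full complement) partition the space exactly, the extra condition $\|T\|(\{u=\delta\})=0$ and the three-way split in part (b) are unnecessary, though harmless.
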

\begin{proof}
To show (a) we use (a) of Lemma \ref{lemma_K_A_converge}, and pass to a subsequence so that $\varphi_{j\#}(T_j \llcorner {E_j^{\delta}})$ converges in the flat sense in $Z$ to  $\varphi_{\#}(T \llcorner {E^{\delta}})$ for almost all $\delta$. Fix such a $\delta$. The rest of the proof follows from the more general result presented next, Lemma \ref{lemma_volumes_general}.

The argument for (b) is handled similarly, using (b) of Lemma \ref{lemma_K_A_converge}.
\end{proof}

\begin{lemma}
\label{lemma_volumes_general}
Suppose $N_j \xrightarrow{\VF} N$, and $Y_j \subseteq X_j$, $Y \subseteq X$ are Borel subsets so that $T_j \rst Y_j$ and $T \rst Y$ are integral currents on $\bar X_j$, and $\bar X$, respectively, and that  $\varphi_{j\#}(T_j \rst Y_j)$ converges to $\varphi_{\#}(T \rst Y)$ in $d_Z^F$ (where $\varphi_j$, $\varphi$, and $Z$ are as in \eqref{eqn_dZF}).
 Then
$$\lim_{j \to \infty} \M (T_j \rst Y_j) = \M(T \rst Y).$$
\end{lemma}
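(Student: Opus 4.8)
The plan is to split the total current mass of $N_j$ into the part sitting over $Y_j$ and the part over its complement, apply lower semicontinuity of mass to each part separately, and then use that $\VF$-convergence pins down the total mass in order to squeeze the two resulting inequalities into an equality.

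Concretely, I would first set $Y_j^{c}=\bar X_j\setminus Y_j$ and $Y^{c}=\bar X\setminus Y$ and observe that $T_j\rst Y_j^{c}=T_j-T_j\rst Y_j$ and $T\rst Y^{c}=T-T\rst Y$ are again integral currents: each is integer rectifiable, and its boundary is a difference of boundaries of integral currents, hence has finite mass. Since restriction of a finite-mass current to a Borel set simply restricts its mass measure (as in \cite{AK}), and since $\varphi_j,\varphi$ are isometric embeddings, this gives $\M(T_j)=\M(T_j\rst Y_j)+\M(T_j\rst Y_j^{c})$ and $\M(T)=\M(T\rst Y)+\M(T\rst Y^{c})$, with all of these masses unchanged under push-forward into $Z$.

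The key auxiliary step is to promote the hypothesis $\varphi_{j\#}(T_j\rst Y_j)\to\varphi_{\#}(T\rst Y)$ in $d_Z^{F}$ to the complementary convergence $\varphi_{j\#}(T_j\rst Y_j^{c})\to\varphi_{\#}(T\rst Y^{c})$ in $d_Z^{F}$. Because push-forward is linear and the flat distance is a translation-invariant seminorm distance (if $S=A+\partial B$ and $S'=A'+\partial B'$ then $S+S'=(A+A')+\partial(B+B')$), one has $d_Z^{F}\bigl(\varphi_{j\#}(T_j\rst Y_j^{c}),\varphi_{\#}(T\rst Y^{c})\bigr)\le d_Z^{F}\bigl(\varphi_{j\#}T_j,\varphi_{\#}T\bigr)+d_Z^{F}\bigl(\varphi_{j\#}(T_j\rst Y_j),\varphi_{\#}(T\rst Y)\bigr)$, and both terms on the right tend to $0$ by \eqref{eqn_dZF} and by hypothesis, respectively. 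Viewing $\bigl(\set(\varphi_{j\#}(T_j\rst Y_j)),d_Z,\varphi_{j\#}(T_j\rst Y_j)\bigr)$ and its complement-counterpart as precompact integral current spaces sitting inside $Z$, flat convergence in $Z$ gives intrinsic flat convergence, so \eqref{eqn_M_LSC} yields $\liminf_j\M(T_j\rst Y_j)\ge\M(T\rst Y)$ and $\liminf_j\M(T_j\rst Y_j^{c})\ge\M(T\rst Y^{c})$.

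Finally I would combine these. From $N_j\xrightarrow{\VF}N$ we have $\M(T_j)\to\M(T)$; passing to a subsequence along which $\M(T_j\rst Y_j)\to\limsup_j\M(T_j\rst Y_j)$, the mass decomposition forces $\M(T_j\rst Y_j^{c})$ to converge, along that subsequence, to $\M(T)-\limsup_j\M(T_j\rst Y_j)$, and comparing with the lower-semicontinuity bound on the complement gives $\limsup_j\M(T_j\rst Y_j)\le\M(T\rst Y)$; together with $\liminf_j\M(T_j\rst Y_j)\ge\M(T\rst Y)$ this is the claim. I do not expect a serious obstacle — the argument is soft — but the point that genuinely needs care, and which is the conceptual heart of the lemma, is verifying that the complementary currents $T_j\rst Y_j^{c}$ are integral and that their push-forwards also converge flatly; this is exactly what converts the one-sided information supplied by the hypothesis into two-sided control and makes the $\VF$ hypothesis do its work.
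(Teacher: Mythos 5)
Your proposal is correct and follows essentially the same route as the paper: restrict to the complementary sets, note that the complementary push-forwards also converge flatly (by linearity of the flat distance), apply lower semicontinuity of mass to both pieces, and use the $\VF$ mass convergence together with additivity of the mass measure to force equality. The only cosmetic difference is that you conclude via a subsequence/limsup argument where the paper uses superadditivity of $\liminf$ in a single chain of inequalities.
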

\begin{proof}
From \eqref{eqn_dZF} and the hypotheses,
$$\varphi_{j\#}(T_j) - \varphi_{j\#}(T_j \llcorner {Y_j}) \to \varphi_{\#}(T) - \varphi_{\#}(T \llcorner {Y}) \text{ in } d_Z^F.$$
It is easy to see that
$$\varphi_{j\#}(T_j ) - \varphi_{j\#}(T_j \llcorner {Y_j}) = \varphi_{j\#}(T_j \llcorner {\tilde Y_j}),$$
where ${\tilde Y_j}$ is the complement of ${Y_j}$ in $X_j$, and likewise 
$$\varphi_{\#}(T) - \varphi_{\#}(T \llcorner {Y}) = \varphi_{\#}(T \llcorner {\tilde Y}),$$
where ${\tilde Y}$ is the complement of ${Y}$ in $X$. In particular, the right-hand sides of the last two equations are integral currents, with $\varphi_{j\#}(T_j \llcorner {\tilde Y_j})$ converging to $\varphi_{\#}(T \llcorner {\tilde Y})$.

Using the $\VF$ hypothesis, the lower semicontinuity of $\M$ under flat convergence  \eqref{eqn_M_LSC}, and the additivity of $\|T_j\|$ and $\|T\|$ on disjoint Borel sets, we compute
\begin{align*}
\M(T) &=\lim_{j \to \infty} \M(T_j) \\
&= \lim_{j \to \infty} \left( \M(T_j \llcorner Y_j) + \M(T_j \llcorner{\tilde Y_j})\right)\\
&\geq \liminf_{j \to \infty} \M(T_j \llcorner Y_j) + \liminf_{j \to \infty} \M(T_j \llcorner{\tilde Y_j})\\
&\geq  \M(T \llcorner Y) + \M(T \llcorner{\tilde Y})\\
&= \M(T).
\end{align*}
Thus, equality holds throughout, and the claim follows.
\end{proof}

\subsection{Convergence of balls and annuli}
\label{balls}
Again, let $N=(X,d,T)$ be a precompact integral current space that is the $\F$-limit of precompact integral current spaces $N_j=(X_j,d_j,T_j)$, and suppose points $p_j \in X_j$ converge to $q \in X$ as in \eqref{eqn_points_converge}. Define the open metric annuli
$$A(p_j,r_1,r_2) =  B(p_j, r_2) \setminus  \bar B(p_j,r_1), \qquad A(q,r_1,r_2) =  B(q, r_2) \setminus  \bar B(q,r_1),$$
subsets of $X_j$ and $X$, respectively.

\begin{lemma}$\;$
\begin{enumerate}
\item [(a)] For almost all $r>0$, $T_j \llcorner B(p_j,r)$ and $T \llcorner B(q,r)$ are integral currents for all $j$, and 
\begin{align*}
B(p_j,r) &\subseteq \set(T_j \llcorner B(p_j,r)) \subseteq \bar B(p_j,r),\\
 B(q,r) &\subseteq \set(T \llcorner B(q,r)) \subseteq \bar B(q,r).
\end{align*}
\item [(b)] For almost all $r_2>r_1>0$, $T_j \llcorner A(p_j,r_1,r_2)$ and $T \llcorner A(q,r_1,r_2)$ are integral currents for all $j$, and 
\begin{align*}
A(p_j,r_1,r_2) &\subseteq \set(T_j \llcorner A(p_j,r_1,r_2)) \subseteq \bar A(p_j,r_1,r_2),\\ 
A(q,r_1,r_2) &\subseteq \set(T \llcorner A(q,r_1,r_2)) \subseteq \bar A(q,r_1,r_2).
\end{align*}
\end{enumerate}
\end{lemma}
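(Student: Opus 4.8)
The plan is to reduce this to results already in hand. Part (a) is Sormani's slicing lemma \cite[Lemma 2.34]{S14} (recalled in Section \ref{sec_background2}), applied once to the integral current space $N$ based at $q$ and once to each $N_j$ based at $p_j$; part (b) is the annular analogue, namely Lemma \ref{lemma_annulus} with the Lipschitz function taken to be a distance function. The only point requiring care beyond quoting these results is that each application discards a Lebesgue-null set of radii, and we must combine countably many such exceptional sets into a single one so that the conclusions hold \emph{for all $j$} with one common exceptional set.

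For (a): since $N$ and each $N_j$ are precompact, $\bar X$ and $\bar X_j$ are compact, and the functions $u = \bar d(\cdot,q)$ on $\bar X$ and $u_j = \bar d_j(\cdot,p_j)$ on $\bar X_j$ are bounded and $1$-Lipschitz, with $B(q,r) = u^{-1}((-\infty,r))$ and $B(p_j,r) = u_j^{-1}((-\infty,r))$. Applying \cite[Lemma 2.34]{S14} (equivalently, Lemma \ref{lemma_K_delta} with these Lipschitz functions, together with the observation that $B$ and $\bar B$ have the same $\|T_j\|$- (resp.\ $\|T\|$-) measure for all but countably many radii) furnishes, for each $j$, a null set $\mathcal{N}_j \subset (0,\infty)$ off of which $T_j \rst B(p_j,r)$ is an integral current with $B(p_j,r) \subseteq \set(T_j \rst B(p_j,r)) \subseteq \bar B(p_j,r)$, and a null set $\mathcal{N}_\infty$ off of which the same holds for $T \rst B(q,r)$. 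Then $\mathcal{N} := \mathcal{N}_\infty \cup \bigcup_j \mathcal{N}_j$ has measure zero, and all the asserted statements hold for $r \notin \mathcal{N}$, for all $j$ and the limit. The set containments are immediate from the definition of $\set$: an interior point $x$ of $B(q,r)$ has a neighborhood on which $\|T \rst B(q,r)\|$ agrees with $\|T\|$, so its lower density is positive because $\set(T) = X$; a point $x \notin \bar B(q,r)$ has a neighborhood on which $T \rst B(q,r)$ vanishes, so its lower density is $0$ --- and identically for each $T_j$.

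For (b): for each $j$ the pushforward of $\|T_j\|$ under $u_j$ is a finite Borel measure on $\R$, hence has at most countably many atoms; so for all but countably many $r$ the sphere $u_j^{-1}(\{r\})$ is $\|T_j\|$-null, and for such $r$ one has $T_j \rst \bar B(p_j,r) = T_j \rst B(p_j,r)$ as currents. Hence for $r_1 < r_2$ both outside the relevant exceptional null set, $T_j \rst A(p_j,r_1,r_2) = T_j \rst B(p_j,r_2) - T_j \rst B(p_j,r_1)$ as currents; this is integer rectifiable (a Borel restriction of the integer rectifiable current $T_j$) and its boundary $\partial(T_j \rst B(p_j,r_2)) - \partial(T_j \rst B(p_j,r_1))$ has finite mass by part (a), so $T_j \rst A(p_j,r_1,r_2)$ is an integral current. (Equivalently, this is Lemma \ref{lemma_annulus} applied to $(X_j,d_j,T_j)$ with $u = u_j$, and to $(X,d,T)$ with $u = \bar d(\cdot,q)$.) The set containments follow exactly as in (a), or as in the proof of Lemma \ref{lemma_annulus}. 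As before, one unions the countably many null exceptional sets of radii (one per $j$, plus one for $N$) into a single null set $\mathcal{N}' \subset (0,\infty)$; then $\{(r_1,r_2) : r_1 < r_2,\ r_1,r_2 \notin \mathcal{N}'\}$ is a full-measure subset of $\{r_1 < r_2\}$, and on it the conclusions hold for all $j$ and for the limit.

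The main obstacle is organizational rather than conceptual: arranging a single exceptional null set that works simultaneously for all $j$ (handled by the countable-union observation), and keeping the open/closed ball distinction under control --- which is harmless precisely because almost every metric sphere carries no $\|T_j\|$- or $\|T\|$-mass. Note that the hypothesis $p_j \to q$ plays no role in this lemma; it will enter only in the subsequent convergence statements.
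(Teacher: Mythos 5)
Your proposal is correct and follows the same route as the paper: part (a) is a direct citation of Sormani's Lemma 2.34 of \cite{S14}, and part (b) is an analogous slicing argument, with the bookkeeping of combining the countably many null exceptional sets (one per $j$) into a single one. The extra care you take with the open/closed boundary mismatch between $A(p_j,r_1,r_2) = B(p_j,r_2)\setminus \bar B(p_j,r_1)$ and the closed sublevel-set annuli of Lemma \ref{lemma_annulus} --- resolved by noting that almost every metric sphere is $\|T_j\|$-null --- is a useful clarification that the paper leaves implicit.
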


\begin{proof}
Part (a) follows from \cite[Lemma 2.34]{S14}; (b) follows from a nearly identical argument.
\end{proof}

Using Theorem \ref{thm_common_space} again, let $\varphi_j$ and $\varphi$ be isometric embeddings of $X_j$ and $X$, respectively into some complete metric space $Z$, such that
\eqref{eqn_dZF}  and \eqref{eqn_points_converge} hold.

\begin{lemma} Upon passing to a subsequence:
\label{lemma_convergence_balls_annuli}
\begin{enumerate}
\item [(a)] For almost all $r>0$
$\varphi_{j\#}(T_j \llcorner B(p_j,r))$ converges in $d_Z^F$ to $\varphi_{\#}(T \llcorner B(q,r))$. In particular, $N_j \llcorner B(p_j,r) \toF N \llcorner B(q,r)$. 
\item [(b)] For almost all $r_1<r_2$
$\varphi_{j\#}(T_j \llcorner A(p_j,r_1,r_2))$ converges in $d_Z^F$ to $\varphi_{\#}(T \llcorner A(q,r_1,r_2))$. In particular, $N_j \llcorner A(p_j,r_1,r_2) \toF N \llcorner A(q,r_1,r_2)$. 
\end{enumerate}
\end{lemma}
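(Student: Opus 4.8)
The plan is to adapt the proof of Lemma~\ref{lemma_K_A_converge}, replacing the fixed Lipschitz function~$u$ by distance functions, while accounting for the fact that the embeddings into $Z$ do not send $p_j$ to $q$ but only send it to a point converging to $\varphi(q)$. Using Theorem~\ref{thm_common_space}, fix $(Z,d_Z)$ and isometric embeddings $\varphi_j,\varphi$ satisfying \eqref{eqn_dZF} and \eqref{eqn_points_converge}, and set $U(z)=d_Z(z,\varphi(q))$ and $w_j(z)=d_Z(z,\varphi_j(p_j))$, both $1$-Lipschitz on $Z$, with $\varepsilon_j:=d_Z(\varphi_j(p_j),\varphi(q))\to 0$ dominating $|w_j(z)-U(z)|$ for every $z$. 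Since $\varphi,\varphi_j$ are isometric embeddings, $U\circ\varphi=d(\cdot,q)$ and $w_j\circ\varphi_j=d_j(\cdot,p_j)$, so \eqref{eqn_restrict_pushforward2} gives $\varphi_{j\#}(T_j\rst B(p_j,r))=(\varphi_{j\#}T_j)\rst w_j^{-1}(-\infty,r)$ and $\varphi_\#(T\rst B(q,r))=(\varphi_\#T)\rst U^{-1}(-\infty,r)$; hence it suffices to estimate the flat distance between these two currents for a.e.\ $r>0$.

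First I would split the difference into two parts by inserting $(\varphi_\#T)\rst w_j^{-1}(-\infty,r)$. The first part, $(\varphi_{j\#}T_j-\varphi_\#T)\rst w_j^{-1}(-\infty,r)$, is handled exactly as in Lemma~\ref{lemma_K_A_converge}: writing $\varphi_{j\#}T_j-\varphi_\#T=\partial B_j+C_j$ with $\M(B_j)+\M(C_j)\to 0$ and using $(\partial B_j)\rst w_j^{-1}(-\infty,r)=\partial(B_j\rst w_j^{-1}(-\infty,r))-\langle B_j,w_j,r\rangle$, one bounds its $d_Z^F$-distance to $0$ by $\M(B_j)+\M(C_j)+\M\langle B_j,w_j,r\rangle$; since $\int_\R \M\langle B_j,w_j,r\rangle\,dr\le\Lip(w_j)\M(B_j)\to 0$ by Theorem~\ref{thm_slicing}, a single subsequence (independent of $r$) makes this tend to $0$ for a.e.\ $r$. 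The second part, $(\varphi_\#T)\rst\big(w_j^{-1}(-\infty,r)\,\triangle\,U^{-1}(-\infty,r)\big)$, has mass at most $\|\varphi_\#T\|\big(U^{-1}[r-\varepsilon_j,r+\varepsilon_j]\big)=\|T\|\big(\{x:|d(x,q)-r|\le\varepsilon_j\}\big)$; since $\|T\|$ is finite and $\varepsilon_j\to 0$, this decreases to $\|T\|(\{d(\cdot,q)=r\})$, which vanishes for all but countably many $r$. Combining the two parts, and invoking the previous lemma to know that $T_j\rst B(p_j,r)$ and $T\rst B(q,r)$ are integral currents for a.e.\ $r$, yields $N_j\rst B(p_j,r)\toF N\rst B(q,r)$ for a.e.\ $r>0$, which is (a).

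Part (b) I would prove by the identical argument with $w_j^{-1}(-\infty,r)$ and $U^{-1}(-\infty,r)$ replaced by $w_j^{-1}(r_1,r_2)$ and $U^{-1}(r_1,r_2)$, using the annular boundary identity from the proof of Lemma~\ref{lemma_annulus} (so two slice terms $\M\langle B_j,w_j,r_1\rangle+\M\langle B_j,w_j,r_2\rangle$ appear), and noting that the symmetric difference lies in $U^{-1}\big([r_1-\varepsilon_j,r_1+\varepsilon_j]\cup[r_2-\varepsilon_j,r_2+\varepsilon_j]\big)$; a Fubini argument on $\R^2$ together with one further passage to a subsequence gives the conclusion for a.e.\ pair $r_1<r_2$. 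The only genuinely new point relative to Lemma~\ref{lemma_K_A_converge}, and the step I expect to require the most care, is this second part: because $p_j$ maps to a point of $Z$ that merely converges to $\varphi(q)$, the sublevel (resp.\ annular) sets for $T_j$ and for $T$ are cut out by the slightly different functions $w_j$ and $U$, and one must absorb the resulting error using the uniform bound $\varepsilon_j\to 0$ and continuity from above of the finite measure $\|T\|$. Everything else is a direct transcription of the earlier proof.
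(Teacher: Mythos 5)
Your argument is correct and is essentially the paper's: the paper simply cites the proof of \cite[Lemma 4.1]{S14}, which proceeds exactly as you do, slicing by the distance functions $d_Z(\cdot,\varphi_j(p_j))$ via Theorem \ref{thm_slicing} to handle the flat-convergence error, and absorbing the discrepancy between the balls centered at $\varphi_j(p_j)$ and at $\varphi(q)$ using $d_Z(\varphi_j(p_j),\varphi(q))\to 0$ together with the finiteness of $\|T\|$ (so the spheres $\{d(\cdot,q)=r\}$ carry positive measure for at most countably many $r$). The same remark applies to your treatment of part (b), which matches the paper's indicated replacement of $\rho_j^{-1}(-\infty,r)$ by $\rho_j^{-1}(r_1,r_2)$.
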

\begin{proof}
Let $\rho_j(\cdot) = d_Z(\cdot,\varphi_j(p_j))$ and $\rho(\cdot) = d_Z(\cdot, \varphi(q))$, Lipschitz functions on $Z$. 
Part (a) is proved in the proof of \cite[Lemma 4.1]{S14}, applying the Ambrosio--Kirchheim slicing theorem to the functions $\rho_j$.
Part (b) is proved very similarly, replacing $\rho_j^{-1}(-\infty,r)$ in the proof with $\rho_j^{-1}(r_1, r_2)$ and $\rho^{-1}(-\infty,r)$ with $\rho^{-1}(r_1, r_2)$.
\end{proof}

Now, we give some results for the case of $\VF$ convergence that are immediate consequences of  Lemma \ref{lemma_volumes_general}.

\begin{lemma} 
If $N_j=(X_j,d_j,T_j) \toVF N=(X,d,T)$, then upon passing to a subsequence,
\begin{enumerate}
\item[(a)] for almost all $r>0$, $\M(T_j \llcorner B(p_j,r)) \to \M(T \llcorner B(q,r))$, and 
\item[(b)] for almost all $r_2>r_1>0$, $\M(T_j \llcorner A(p_j,r_1,r_2)) \to \M(T \llcorner A(q,r_1,r_2))$.
\end{enumerate}
\end{lemma}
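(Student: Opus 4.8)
The plan is to deduce the lemma directly from Lemma~\ref{lemma_convergence_balls_annuli} and Lemma~\ref{lemma_volumes_general}. First I would fix, via Theorem~\ref{thm_common_space}, isometric embeddings $\varphi_j$ of $X_j$ and $\varphi$ of $X$ into a common complete metric space $Z$ so that \eqref{eqn_dZF} and \eqref{eqn_points_converge} hold. Applying Lemma~\ref{lemma_convergence_balls_annuli}(a), pass to a subsequence so that for almost all $r>0$ the currents $T_j \rst B(p_j,r)$ and $T \rst B(q,r)$ are integral currents and $\varphi_{j\#}(T_j \rst B(p_j,r))$ converges to $\varphi_{\#}(T \rst B(q,r))$ in $d_Z^F$; as in the analogous statements earlier, this subsequence does not depend on $r$. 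Then apply Lemma~\ref{lemma_convergence_balls_annuli}(b) to this subsequence and pass to a further subsequence so that the same conclusion holds for the annuli $A(p_j,r_1,r_2)$ for almost all pairs $r_2 > r_1 > 0$. Since a subsequence of a subsequence is again a subsequence of the original, a single subsequence works for both (a) and (b).

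Second, fix any $r>0$ in the full-measure set produced above. Then the hypotheses of Lemma~\ref{lemma_volumes_general} are satisfied with $Y_j = B(p_j,r) \subseteq X_j$ and $Y = B(q,r) \subseteq X$: indeed $T_j \rst Y_j$ and $T \rst Y$ are integral currents on $\bar X_j$ and $\bar X$ respectively, and $\varphi_{j\#}(T_j \rst Y_j) \to \varphi_{\#}(T \rst Y)$ in $d_Z^F$ by the choice of subsequence. Hence Lemma~\ref{lemma_volumes_general} yields $\M(T_j \rst B(p_j,r)) \to \M(T \rst B(q,r))$, which is (a). Part (b) follows by the identical argument applied to $Y_j = A(p_j,r_1,r_2)$ and $Y = A(q,r_1,r_2)$ for $(r_1,r_2)$ in the corresponding full-measure set.

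I do not expect a genuine obstacle here: all the analytic content—that mass cannot be lost in the pieces once it is not lost in the total, via the lower semicontinuity \eqref{eqn_M_LSC} combined with the $\VF$ hypothesis—is already packaged in Lemma~\ref{lemma_volumes_general}. The only points requiring (minor) care are bookkeeping: ensuring the null exceptional sets coming from Lemma~\ref{lemma_convergence_balls_annuli} and from the conditions ``$T_j \rst Y_j$, $T \rst Y$ are integral currents'' are compatible (a countable union of null sets is null), and arranging one subsequence valid for both (a) and (b), which is handled by the nested-subsequence argument above.
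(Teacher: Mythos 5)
Your proposal is correct and follows essentially the same route as the paper, which states this lemma as an immediate consequence of Lemma~\ref{lemma_volumes_general}, with the flat convergence of the restricted currents supplied by Lemma~\ref{lemma_convergence_balls_annuli} exactly as you do. The only cosmetic difference is that Lemma~\ref{lemma_convergence_balls_annuli} already provides a single subsequence working simultaneously for (a) and (b), so your nested-subsequence step, while harmless, is not strictly needed.
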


\subsection{Convergence of boundary masses}
\label{sec_converge_perim}
In the previous two subsections, we assumed $N_j \toF N$ (or $N_j \toVF N$) and constructed regions in $N_j$ that $\F$- (or $\VF$-) converge to a specified region in $N$.
 In general, however, the masses of the \emph{boundaries} of the subregions will only be lower semicontinuous. (This follows from \eqref{eqn_M_LSC}, noting that if $N_j \toF N$, then $\partial N_j \toF \partial N$.)   The purpose of this section is to formulate hypotheses to obtain convergence (or approximate convergence) of the boundary masses. This is needed because a drop in boundary mass would lead to an unfavorable change in the isoperimetric mass in the proof of Theorem \ref{thm1}, as discussed in the introduction.

\smallskip

\paragraph{\emph{Setup:}}
Suppose $(M_j, g_j, p_j)$ is a sequence of smooth, pointed, oriented asymptotically flat $m$-manifolds (with corresponding locally integral currents $T_j$ defined by \eqref{eqn_canonical}, so these can be viewed as pointed AF local integral current spaces $N_j=(M_j, d_{g_j},g_j, T_j,p_j)$) that converges in the pointed $\VF$ sense  to a complete AF local integral current space $N=(X,d,g,T,q)$ of dimension $m$. 
Fix $K \subset X$ as in Definition \ref{def_AF_LICS}, and let $E \subset X$ be a compact set containing $K$, with $\partial E$ located in the open $C^0$ AF end $X \setminus K$, and $\partial E$ smooth. 
Choose $R>0$ sufficiently large so that $E \subset B(q,R-1)$ and $N_j \llcorner B(p_j,R)$ converges in $\VF$ to $N \llcorner B(q,R)$ and $p_j \to q$ (using Definition \ref{def_pointed}). See Figure \ref{fig_setup}.

\begin{figure}[ht]
	\begin{center}
		\includegraphics[scale=0.60]{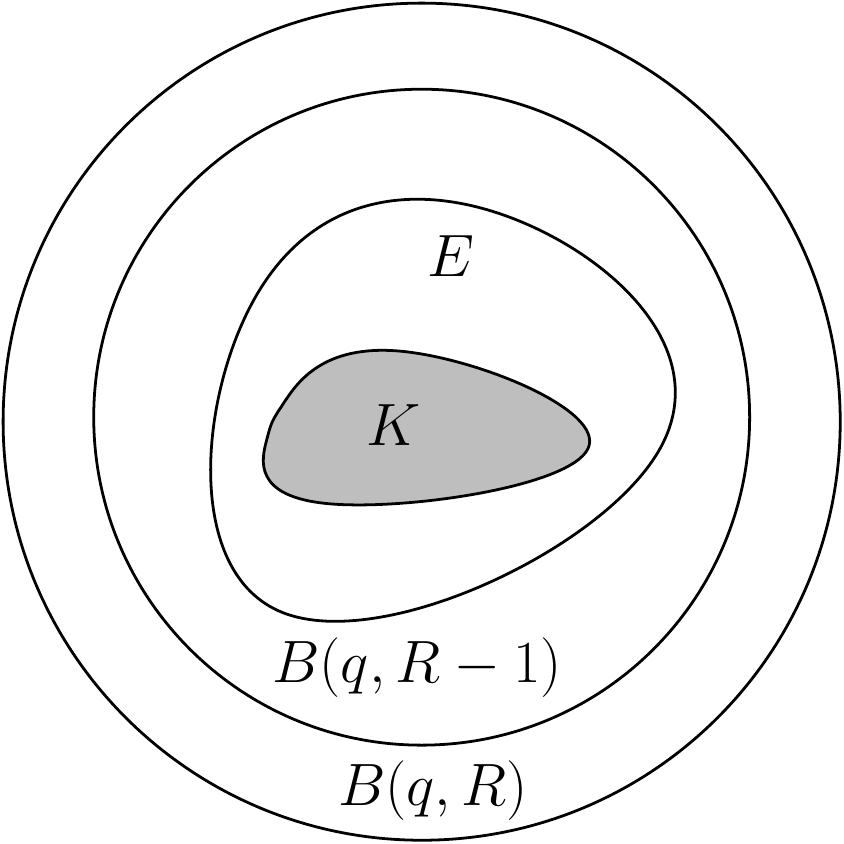}
		\caption{\small The metric space $X$ is shown, with some of the subsets used in the setup of section \ref{sec_converge_perim}.
			\label{fig_setup}}
	\end{center}
\end{figure}

By Theorem \ref{thm_common_space} there exists a complete metric space $(Z,d_Z)$ 
and isometric embeddings $\varphi_j$ of $\bar B(p_j,R)$ into $Z$ and $\varphi$ of $\bar B(q,R))$ into $Z$, such that
the pushed-forward currents converge in the flat sense in $Z$:
\begin{equation*}
\varphi_{j\#} (T_j \llcorner \bar B(p_j,R)) \to \varphi_{\#} (T \llcorner \bar B(q,R)) \text{ in } d_Z^F.
\end{equation*}
Let $u$ be a Lipschitz function on $(X,d)$ that is negative in the interior of $E$, positive outside $E$, and zero on $\partial E$. 
Let $U:Z \to \R$ be a Lipschitz extension of $u \circ \varphi^{-1}$, as in \eqref{eqn_U}, with $\Lip(U)=\Lip(u)$, and define
 $u_j = U \circ \varphi_j$, which define Lipschitz functions on $\bar B(p_j,R)$ with $\Lip(u_j) \leq \Lip(u)$. For $\delta \in \R$ and $r \in (0,R]$, define the 
sets
\begin{align}
E_j^{\delta} &= \{u_j \leq \delta\} \subseteq \bar B(p_j,R) \subset M_j, \nonumber\\
E_j^{\delta,r} &= \{u_j \leq \delta\} \cap \bar B(p_j,r) \subset M_j. \label{E_j_delta_r}
\end{align}
That is, $E_j^{\delta,r}$ is a truncation of $E_j^{\delta}$, which will be needed in case $E_j^{\delta}$ extends all the way out to $\partial B(p_j,R)$, as shown in Figure \ref{fig_E_j}. Also define $E^\delta = \{u \leq \delta\}$ as before, so that $E^0=E$.

\begin{figure}[ht]
	\begin{center}
		\includegraphics[scale=0.7]{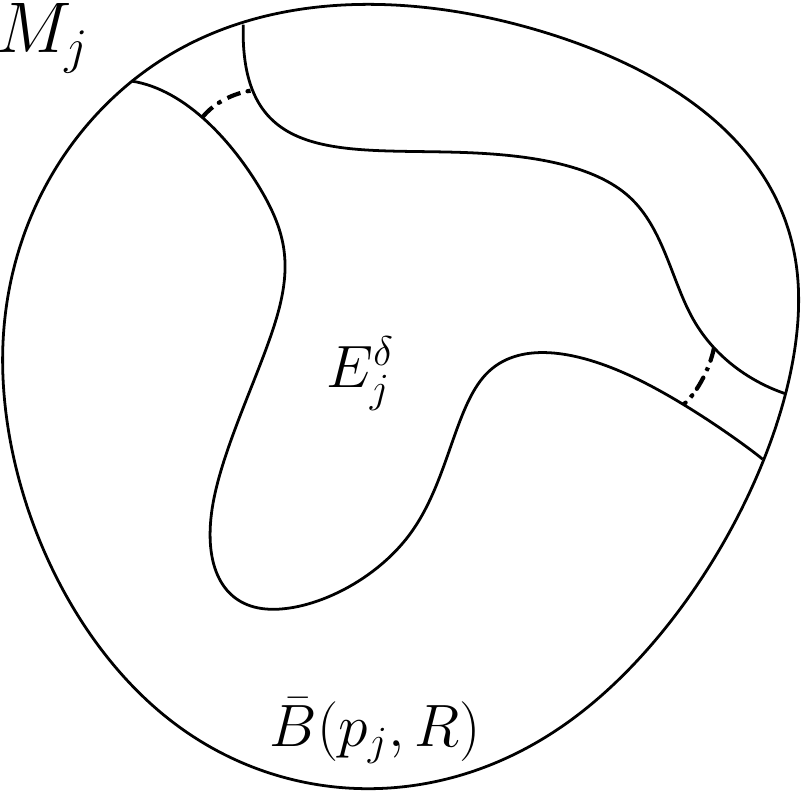}
		\caption{\small The ball $\bar B(p_j, R) \subset M_j$ is shown, along with the subset $E^\delta_j$. The portion of $E^\delta_j$ bounded by the dotted lines represents $E^{\delta,r}_j$, for $r < R$.
			\label{fig_E_j}}
	\end{center}
\end{figure}

\smallskip

We prove two results regarding the convergence of the boundary masses of $T_j \llcorner E_j^{\delta,r}$ to the boundary mass of $T \rst E^\delta$. In the first case for the defining function $u$ for $E$, $|du|$ is assumed to be exactly 1 near $\partial E$; in the second, $|du|$ need only be close to 1 near $\partial E$.
\begin{prop}
\label{prop_perimeters_converge}
Suppose $\Lip(u)=1$ and that $u$ is $C^1$ on a neighborhood of $\partial E$, and $|du|_g = 1$ on this neighborhood. Then there exists $\delta_0 < 0$ such that, upon  passing to a subsequence,
\begin{align}
N_j \llcorner E_j^{\delta,r} &\toF N \llcorner E^\delta, \label{eqn_flat_limit}\\
\lim_{j \to \infty} \M (T_j \llcorner E_j^{\delta,r}) &= \M(T \llcorner E^\delta), \label{eqn_mass_limit}  \qquad \text{and}\\
\liminf_{j \to \infty} \M(\partial (T_j \llcorner E_j^{\delta,r})) &= \M(\partial (T \llcorner E^\delta))\label{eqn_perim_limit}
\end{align}
for almost all $r \in (R-1,R)$ and almost all $\delta \in [\delta_0, 0]$.
\end{prop}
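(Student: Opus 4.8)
The plan is to establish the three conclusions in the order \eqref{eqn_flat_limit}, \eqref{eqn_mass_limit}, \eqref{eqn_perim_limit}: the first by a ``double slice'' variant of the arguments of Section~\ref{sec_regions}, the second by a direct application of Lemma~\ref{lemma_volumes_general}, and the third --- the heart of the proposition --- by the slicing/coarea identity combined with a Fatou comparison that converts the hypotheses $\Lip(u)=1$ and $|du|_g=1$ near $\partial E$ into a proof that the boundary masses cannot drop. For \eqref{eqn_flat_limit}, write $\varphi_{j\#}(T_j\llcorner\bar B(p_j,R))-\varphi_\#(T\llcorner\bar B(q,R))=\partial B_j+C_j$ with $\M(B_j)+\M(C_j)\to0$, and set $\rho_j=d_Z(\,\cdot\,,\varphi_j(p_j))$, so $\rho_j\circ\varphi_j=d_{g_j}(\,\cdot\,,p_j)$. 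Restricting this identity first to $U^{-1}(-\infty,\delta]$ and then to $\rho_j^{-1}[0,r)$, using \eqref{eqn_restrict_pushforward2} and the identity $(\partial D)\llcorner h^{-1}(-\infty,s]=\partial(D\llcorner h^{-1}(-\infty,s])-\langle D,h,s\rangle$, realizes $\varphi_{j\#}(T_j\llcorner E_j^{\delta,r})-\varphi_\#(T\llcorner E^\delta)$ as $\partial B_j'+A_j'$ with $\M(B_j')\le\M(B_j)$ and $\M(A_j')\le\M(C_j)+\M\langle B_j,U,\delta\rangle+\M\langle B_j\llcorner U^{-1}(-\infty,\delta],\rho_j,r\rangle$. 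By Theorem~\ref{thm_slicing}, the integral of $\M\langle B_j,U,\delta\rangle$ over $\delta\in[\delta_0,0]$ and the integral of $\M\langle B_j\llcorner U^{-1}(-\infty,\delta],\rho_j,r\rangle$ over $(\delta,r)\in[\delta_0,0]\times(R-1,R)$ are both $O(\M(B_j))\to0$, so after passing to a subsequence (not depending on $\delta,r$) we get $d_Z^F(\varphi_{j\#}(T_j\llcorner E_j^{\delta,r}),\varphi_\#(T\llcorner E^\delta))\to0$ for a.e.\ such $(\delta,r)$; here we also use that, since $E^\delta\subseteq E^0=E\subset B(q,R-1)$ for $\delta\le0$ and $\varphi$ is isometric with $\varphi_j(p_j)\to\varphi(q)$, the current $\varphi_\#(T\llcorner E^\delta)$ already lives in $\rho_j^{-1}[0,r)$ for $r>R-1$ and $j$ large, so truncating it costs nothing. (Equivalently, one may compose Lemmas~\ref{lemma_K_A_converge}(a) and~\ref{lemma_convergence_balls_annuli}(a).)

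For \eqref{eqn_mass_limit}, apply Lemma~\ref{lemma_volumes_general} to the precompact pair $N_j\llcorner B(p_j,R)\toVF N\llcorner B(q,R)$ with $Y_j=E_j^{\delta,r}\subseteq B(p_j,r)$ and $Y=E^\delta\subseteq B(q,R)$: the restricted currents are $T_j\llcorner E_j^{\delta,r}$ and $T\llcorner E^\delta$, and the flat-convergence hypothesis is \eqref{eqn_flat_limit}. Since moreover $\M(T_j\llcorner E_j^\delta)\to\M(T\llcorner E^\delta)$ by Lemma~\ref{lemma_volumes}(a) and $E^\delta\subset B(q,R-1)$, we deduce $\|T_j\|(E_j^\delta\setminus B(p_j,r))\to0$ for $r\in(R-1,R)$. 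Finally, from \eqref{eqn_flat_limit} we have $\partial(N_j\llcorner E_j^{\delta,r})\toF\partial(N\llcorner E^\delta)$, so \eqref{eqn_M_LSC} gives $\liminf_j\M(\partial(T_j\llcorner E_j^{\delta,r}))\ge\M(\partial(T\llcorner E^\delta))$ for every $(\delta,r)$ to which the first step applies; this is the easy half of \eqref{eqn_perim_limit}.

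For the reverse (limsup) half, choose $\delta_0<0$ so small that the compact shell $\{\delta_0\le u\le0\}$ lies in the neighborhood of $\partial E$ on which $u$ is $C^1$ with $|du|_g=1$ (possible since $\{u=0\}=\partial E$ is compact), and so that Lemma~\ref{lemma_volumes}(a) holds at level $\delta_0$. Because $\partial T_j=0$, the Leibniz rule (Lemma~\ref{lemma_leibniz}) gives, for a.e.\ $r<R$,
\[
\partial(T_j\llcorner E_j^{\delta,r})=\langle T_j,u_j,\delta\rangle\llcorner B(p_j,r)+\langle T_j,d_{g_j}(\,\cdot\,,p_j),r\rangle\llcorner\{u_j\le\delta\}.
\]
Integrating the mass of the second summand over $r\in(R-1,R)$, Theorem~\ref{thm_slicing} bounds it by $\|T_j\|(E_j^\delta\cap A(p_j,R-1,R))\to0$; integrating also in $\delta\in[\delta_0,0]$ and passing to a further subsequence, this term tends to $0$ for a.e.\ $(\delta,r)$. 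For the first summand, whose mass is at most $\M\langle T_j,u_j,\delta\rangle$, the bound $\Lip(u_j)\le\Lip(u)=1$ and Lemma~\ref{lemma_mass_measure}(a) give $\|T_j\llcorner du_j\|\le\|T_j\|=\mu_{g_j}$, so by the localized form of Theorem~\ref{thm_slicing},
\[
\int_{\delta_0}^0\M\langle T_j,u_j,\delta\rangle\,d\delta=\|T_j\llcorner du_j\|\big(u_j^{-1}(\delta_0,0]\big)\le\mu_{g_j}\big(E_j^0\setminus E_j^{\delta_0}\big).
\]
A monotonicity-and-continuity argument using Lemma~\ref{lemma_volumes}(a) (at level $\delta_0$ and at nearby levels) together with $\mu_g(\{u=0\})=0$ shows $\mu_{g_j}(E_j^0)\to\mu_g(E^0)$, so the right-hand side converges to $\mu_g(E^0\setminus E^{\delta_0})$; and since $|du|_g=1$ on the shell, Lemma~\ref{lemma_mass_measure}(b) with $\gamma=0$ gives $\|T\llcorner du\|=\mu_g$ there, whence
\[
\mu_g\big(E^0\setminus E^{\delta_0}\big)=\|T\llcorner du\|\big(u^{-1}(\delta_0,0]\big)=\int_{\delta_0}^0\M\langle T,u,\delta\rangle\,d\delta=\int_{\delta_0}^0\M\big(\partial(T\llcorner E^\delta)\big)\,d\delta.
\]
Combining the two summands yields $\limsup_j\int_{\delta_0}^0\M(\partial(T_j\llcorner E_j^{\delta,r}))\,d\delta\le\int_{\delta_0}^0\M(\partial(T\llcorner E^\delta))\,d\delta$ for a.e.\ $r\in(R-1,R)$; paired with the pointwise liminf bound, Fatou's lemma forces equality in that bound for a.e.\ $\delta\in[\delta_0,0]$, which is \eqref{eqn_perim_limit}.

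The main obstacle is precisely the limsup half of the boundary estimate: boundary mass is only lower semicontinuous under $\F$-convergence, and a strict drop is exactly what the hypotheses $\Lip(u)=1$ and $|du|_g=1$ near $\partial E$ are designed to rule out. The slicing identity converts the integrated level-set boundary mass into a volume, where $\Lip(u_j)\le1$ supplies the one-sided bound $\|T_j\llcorner du_j\|\le\mu_{g_j}$ and $|du|_g=1$ supplies the matching equality $\|T\llcorner du\|=\mu_g$ in the limit; at the same time the cut-off radius $r$ must be chosen, via the $\VF$-volume convergence, so that whatever portion of $E_j^\delta$ spills past $\partial B(p_j,r)$ contributes negligibly to the boundary. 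Keeping the nested subsequences and the various full-measure sets of $\delta$'s and $r$'s mutually compatible is the bulk of the remaining technical work.
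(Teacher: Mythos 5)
Your proof is correct and follows essentially the same strategy as the paper's: establish \eqref{eqn_flat_limit} by a double restriction/slicing argument, \eqref{eqn_mass_limit} via Lemma~\ref{lemma_volumes_general}, the easy ``$\geq$'' half of \eqref{eqn_perim_limit} from lower semicontinuity \eqref{eqn_M_LSC}, and the crucial integrated limsup bound by splitting $\partial(T_j\llcorner E_j^{\delta,r})$ via Lemma~\ref{lemma_leibniz} into a level-set slice (controlled by the coarea identity together with $\Lip(u)=1$, $|du|_g=1$, and Lemma~\ref{lemma_mass_measure}) and a ball-boundary piece (controlled via $\VF$-volume convergence on the outer annulus), then closing with Fatou. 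The only cosmetic departures are that you derive the annulus volume identity from Lemma~\ref{lemma_volumes}(a) at the levels $\delta_0,0$ plus monotonicity/continuity, whereas the paper invokes Lemma~\ref{lemma_volumes}(b) directly, and your assertion ``$\partial T_j=0$'' should be read for the unrestricted current on $M_j$ (for the abbreviated $T_j\llcorner\bar B(p_j,R)$ the boundary is supported on $\partial B(p_j,R)$, but those terms still drop out after restricting to $\bar B(p_j,r)$ with $r<R$).
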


We  also prove an approximate version of the proposition, allowing $|du|_g$ to be close to 1 near $\partial E$ at the expense of only obtaining approximate convergence of the boundary masses. 
\begin{prop}
\label{prop_perimeters_converge2}
Take the value $\gamma_0>0$ from Lemma \ref{lemma_mass_measure}. For any $\gamma \in [0, \gamma_0]$, if $\Lip(u) \leq (1+\gamma)$, and $u$ is $C^1$ in a neighborhood of $\partial E$, and $(1+\gamma)^{-1} \leq |du|_g \leq (1+\gamma)$ in this neighborhood, then there exists $\delta_0 < 0$ such that, upon  passing to a subsequence,
\begin{align}
N_j \llcorner E_j^{\delta,r} &\toF N \llcorner E^\delta, \label{eqn_flat_limit2}\\
\lim_{j \to \infty} \M (T_j \llcorner E_j^{\delta,r}) &= \M(T \llcorner E^\delta), \label{eqn_mass_limit2} \qquad \text{and}\\
 \M(\partial (T \llcorner E^\delta))&\leq \liminf_{j \to \infty} \M(\partial (T_j \llcorner E_j^{\delta,r}))  \leq (1+\gamma)^2 \M(\partial (T \llcorner E^\delta)) \label{eqn_perim_limit2}
\end{align}
for almost all $r \in (R-1,R)$ and a dense subset of $\delta \in [\delta_0, 0]$.
\end{prop}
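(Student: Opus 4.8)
The plan is to run the argument of Proposition~\ref{prop_perimeters_converge}, replacing each use of the equality $\|T\rst du\|=\mu_g$ from Lemma~\ref{lemma_mass_measure}(a) by the two-sided bound of Lemma~\ref{lemma_mass_measure}(b); this is exactly what produces the factor $(1+\gamma)^2$ in \eqref{eqn_perim_limit2}. Conclusions \eqref{eqn_flat_limit2} and \eqref{eqn_mass_limit2} do not involve the hypothesis on $|du|_g$ and are obtained as in Proposition~\ref{prop_perimeters_converge}: one applies the machinery of Section~\ref{sec_regions} --- Lemmas~\ref{lemma_K_A_converge}, \ref{lemma_convergence_balls_annuli}, \ref{lemma_volumes}, and \ref{lemma_volumes_general} --- to the pair of Lipschitz functions $U$ (the extension of $u\circ\varphi^{-1}$ to $Z$) and $z\mapsto d_Z(z,\varphi_j(p_j))$, carrying out the iterated generic-slicing argument. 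After passing to a single subsequence this gives $N_j\rst E_j^{\delta,r}\toF N\rst E^\delta$ and $\M(T_j\rst E_j^{\delta,r})\to\M(T\rst E^\delta)$ for almost every $r\in(R-1,R)$ and almost every $\delta$, using $E^\delta\subseteq E\subseteq B(q,R-1)\subseteq B(q,r)$ so that truncation at radius $r$ is inert on the limit side. The lower bound in \eqref{eqn_perim_limit2} is then formal: $\partial(N_j\rst E_j^{\delta,r})\toF\partial(N\rst E^\delta)$, so \eqref{eqn_M_LSC} gives $\liminf_j\M(\partial(T_j\rst E_j^{\delta,r}))\ge\M(\partial(T\rst E^\delta))$.

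The substance is the upper bound. Writing $\rho_j=d_{g_j}(\cdot,p_j)$ and using $\partial T_j=0$, Lemma~\ref{lemma_leibniz}(b) with $A=\{u_j\le\delta\}$ and $B=\bar B(p_j,r)$, together with the fact that $\partial(T_j\rst\bar B(p_j,R))=\langle T_j,\rho_j,R\rangle$ has mass measure concentrated on $\{\rho_j=R\}$ and hence is killed upon restricting to $\bar B(p_j,r)$ for $r<R$, I would establish the decomposition
\[
\partial(T_j\rst E_j^{\delta,r})=\langle T_j,u_j,\delta\rangle\rst\bar B(p_j,r)\;+\;\langle T_j,\rho_j,r\rangle\rst E_j^{\delta}.
\]
The same Leibniz identity and $\partial T_j=0$ give $\langle T_j,\rho_j,r\rangle\rst E_j^{\delta}=\langle T_j\rst E_j^{\delta},\rho_j,r\rangle$, so the coarea inequality \eqref{eqn_slicing} applied on the shell $\{R-1<\rho_j<R\}$ bounds $\int_{R-1}^{R}\M\langle T_j\rst E_j^{\delta},\rho_j,r\rangle\,dr$ by $\|T_j\|(E_j^{\delta}\cap(B(p_j,R)\setminus\bar B(p_j,R-1)))=\M(T_j\rst E_j^{\delta})-\M(T_j\rst E_j^{\delta,R-1})$, which tends to $0$ by \eqref{eqn_mass_limit2} (arrange that $R-1$ is a generic radius, and use $E^\delta\subseteq B(q,R-1)$). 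Hence, along a further subsequence, this ``radial cut-off'' term has vanishing mass for almost every $r$.

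It remains to control the mass of $\langle T_j,u_j,\delta\rangle\rst\bar B(p_j,r)$, where the hypothesis $(1+\gamma)^{-1}\le|du|_g\le1+\gamma$ enters. Fix $\delta_0<0$ small enough that the collar $A^{\delta_0,0}=\{\delta_0\le u\le0\}$ lies in $X\setminus K$, where $u$ is $C^1$ with $(1+\gamma)^{-1}\le|\nabla u|_g\le1+\gamma$ and $T$ is the canonical current of $g$. For $[a,b]\subseteq[\delta_0,0]$ put $S_j=T_j\rst(\{a\le u_j\le b\}\cap\bar B(p_j,r))$; by locality $\langle T_j,u_j,s\rangle\rst\bar B(p_j,r)=\langle S_j,u_j,s\rangle$ for $s\in(a,b)$, so \eqref{eqn_slicing} gives $\int_a^b\M(\langle T_j,u_j,s\rangle\rst\bar B(p_j,r))\,ds\le\Lip(u_j)\M(S_j)\le(1+\gamma)\M(S_j)$, and $\M(S_j)\to\M(T\rst A^{a,b})=|A^{a,b}|_g$ for generic $r$ and generic pairs $(a,b)$ by Lemma~\ref{lemma_mass_measure}(a) and the convergence results of Section~\ref{sec_regions}. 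On the other hand, applying Lemma~\ref{lemma_mass_measure}(b) to $u$ on a precompact collar neighborhood of $\partial E$, together with \eqref{eqn_slicing} and $\partial(T\rst E^s)=\langle T,u,s\rangle$, gives $|A^{a,b}|_g=\|T\|(A^{a,b})\le(1+\gamma)\|T\rst du\|(A^{a,b})=(1+\gamma)\int_a^b\M(\partial(T\rst E^s))\,ds$. Combining, $\limsup_j\int_a^b\M(\langle T_j,u_j,s\rangle\rst\bar B(p_j,r))\,ds\le(1+\gamma)^2\int_a^b\M(\partial(T\rst E^s))\,ds$ for generic $r$ and generic $(a,b)$. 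Since the generic levels form a full-measure (hence dense) set, a Fatou argument shows that for almost every $r$ the set $\{\delta\in[\delta_0,0]:\liminf_j\M(\langle T_j,u_j,\delta\rangle\rst\bar B(p_j,r))\le(1+\gamma)^2\M(\partial(T\rst E^\delta))\}$ meets every interval $[a,b]$ with $a,b$ generic --- if not, then $\int_a^b\liminf_j\M(\langle T_j,u_j,s\rangle\rst\bar B(p_j,r))\,ds$ would strictly exceed $(1+\gamma)^2\int_a^b\M(\partial(T\rst E^s))\,ds$, contradicting Fatou and the estimate above --- and is therefore dense. Adding the vanishing radial cut-off term then yields the upper bound in \eqref{eqn_perim_limit2} for such $\delta$, and intersecting with the (full-measure) set of levels for which \eqref{eqn_flat_limit2} and \eqref{eqn_mass_limit2} hold keeps the admissible $\delta$ dense.

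The main difficulty is the measure-theoretic and subsequence bookkeeping: every step yields information only along a subsequence and only for generic values of the relevant parameters, the generic radii and generic levels interact, and the fixed endpoint $\delta=0$ (where $E^0=E$) is never a generic level, so one must work with collars $A^{a,b}$ for $b$ ranging through generic values approaching $0$ rather than with $A^{\delta_0,0}$ itself. The deeper point is that, unlike the case $\gamma=0$ of Proposition~\ref{prop_perimeters_converge} --- where the \emph{equality} $\|T\rst du\|=\mu_g$ forces $L^1$-convergence of the boundary masses and hence a.e.\ convergence along a subsequence --- here only the \emph{inequality} $\mu_g\le(1+\gamma)\|T\rst du\|$ is available, so the integrated estimate can only be converted into a pointwise bound on a dense set of levels via the Fatou argument above; this is the origin of ``a dense subset of $\delta$'' in place of ``almost all $\delta$''. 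One also needs the radial cut-off term to vanish in the limit (not merely stay bounded), which is why both the geometric condition $E\subset B(q,R-1)$ and the volume convergence \eqref{eqn_mass_limit2} are essential.
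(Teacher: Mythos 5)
Your overall route is the same as the paper's: the two-sided bounds of Lemma \ref{lemma_mass_measure}(b) replace the exact identity in the slicing estimates, annular mass convergence (Lemma \ref{lemma_volumes}(b)) supplies the limit of $\M(S_j)$, and a Fatou argument converts the integrated inequality into a bound at a dense set of levels, which is exactly why the conclusion is ``dense'' rather than ``almost all.'' The genuine gap is in how you dispose of the radial cut-off term $\langle T_j,\rho_j,r\rangle\rst E_j^{\delta}=(\partial (T_j\rst \bar B(p_j,r)))\rst\{u_j\le\delta\}$. You prove its vanishing only for almost every $\delta$ (you need \eqref{eqn_mass_limit2} at that $\delta$), along a \emph{further, $\delta$-dependent} subsequence, and for almost every $r$ with the exceptional null set of radii depending on $\delta$ and on that subsequence. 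But the set of levels produced by your Fatou argument is only dense --- potentially Lebesgue-null --- and the radius $r$ has already been fixed. So when you say ``adding the vanishing radial cut-off term then yields the upper bound for such $\delta$,'' you are evaluating an almost-everywhere statement at points of a possibly null set, at a pre-fixed $r$, along a subsequence that no longer matches; this quantifier exchange is not justified, and the proposition moreover requires a single subsequence for which all three conclusions hold. The paper resolves exactly this: it proves $\M((\partial T_j^r)\rst\{u_j\le s\})\to 0$ for one fixed level $s\in[0,\alpha]$ along one subsequence and a.e.\ $r$, and then uses monotonicity of restriction, $\{u_j\le\delta\}\subseteq\{u_j\le 0\}\subseteq\{u_j\le s\}$ for $\delta\le 0\le s$, so the cut-off term vanishes \emph{simultaneously for every} $\delta\in[\delta_0,0]$ and a.e.\ $r$; Fatou is then run on the full boundary masses $\M(\partial(T_j^r\rst E_j^s))$, not just on the $u_j$-slice term. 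Either adopt that monotonicity device, or integrate the radial term in $s$ as well (Fubini plus a dominated-convergence bound by the shell volume) so that the Fatou argument applies directly to the full boundary mass.

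Two smaller points. You cannot ``arrange that $R-1$ is a generic radius,'' since $R$ is fixed by the setup; instead take any generic $r_1\in(R-1,R)$ and use $E^{\delta}\subset B(q,R-1)\subset B(q,r_1)$ so the shell integral runs over $(r_1,R)$ --- this is what the paper does with $r_1<r_2$ in $(R-1,R)$. Also, density is not preserved under intersection with a full-measure set (a dense set can be null and miss a co-null set entirely); your own Fatou contradiction actually shows the good set has \emph{positive measure} in every generic interval, and it is that stronger statement which survives intersection with the full-measure sets where \eqref{eqn_flat_limit2}, \eqref{eqn_mass_limit2}, and the lower semicontinuity bound hold --- state it that way, as the paper does.
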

In fact we will only use Proposition \ref{prop_perimeters_converge2} later in the paper, but we present Proposition \ref{prop_perimeters_converge} and its proof separately for the sake of exposition (and possibly other applications).

\begin{remark}
To simplify the notation, $T$ in this proof and the next will refer to the integral current $T \llcorner \bar B(q,R)$ on $(\bar X, \bar d)$, and $T_j$ will refer to the integral current $T_j \llcorner \bar B(p_j,R)$ on $M_j$. $T_j^r$ will denote $T_j \llcorner \bar B(p_j,r)$, for $r>0$.
\end{remark}

\begin{proof}[Proof of Proposition \ref{prop_perimeters_converge}]
We first claim there exists $\delta_0<0$ so that (upon passing to a subsequence) for almost all $\delta_1 < \delta_2$ in $[\delta_0, 0]$ and all $r \in [R-1,R]$, we have
\begin{equation}
\label{eqn_limsup}
\limsup_{j \to \infty} \int_{\delta_1}^{\delta_2} \M \langle T_j^r, u_j,s\rangle ds \leq \int_{\delta_1}^{\delta_2} \M \langle T,u,s\rangle ds.
\end{equation}
At first, fix  any $r \in [R-1,R]$ and any pair $\delta_1 < \delta_2 \leq 0$.

To establish (\ref{eqn_limsup}), we  first apply  Theorem \ref{thm_slicing} to obtain a mass estimate. Recall $A_j^{\delta_1, \delta_2} = \{\delta_1 \leq u_j \leq \delta_2\}$ and $A^{\delta_1, \delta_2}=\{\delta_1 \leq u \leq \delta_2\}$.
\begin{align}
\int_{\delta_1}^{\delta_2} \M\langle T_j^r ,u_j,s\rangle ds &= \int_{-\infty}^{\infty} \M\langle T_j^r \rst A_j^{\delta_1, \delta_2} ,u_j,s\rangle ds\nonumber\\
&\leq \Lip(u_j) \mathbb{M}(T_j^r \rst A_j^{\delta_1, \delta_2})\nonumber\\
&\leq  \mathbb{M}(T_j \rst A_j^{\delta_1, \delta_2} ), \label{eqn_annulus_j}
\end{align}
since $\Lip(u_j) \leq 1$.

We can use a similar argument along with Lemma \ref{lemma_mass_measure} to compute (not just estimate) the mass of $T \llcorner A^{\delta_1, \delta_2}$. Fix $\delta_0<0$ small enough in absolute value so that $\{\delta_0 \leq u \leq 0\}$ lies in $X \setminus K$ and the region on which $u$ is $C^1$ with $|du|_g=1$.
Then for any $\delta_1 < \delta_2$ in $[\delta_0, 0]$, using Theorem \ref{thm_slicing},
\begin{align}
\int_{\delta_1}^{\delta_2} \M \langle T, u, s\rangle ds &=\int_{-\infty}^\infty\M \langle T \llcorner A^{\delta_1,\delta_2}, u, s\rangle ds\nonumber \\
&=\mathbb{M}((T \rst A^{\delta_1, \delta_2}) \rst du)\nonumber \\
&=\mathbb{M}(T \rst A^{\delta_1, \delta_2}). \label{eqn_annulus}
\end{align}
Here, we used the fact that $\|T \rst du\| = \|T\|$ as Borel measures on $A^{\delta_1,\delta_2}$ by Lemma \ref{lemma_mass_measure}.

Combining \eqref{eqn_annulus_j} and \eqref{eqn_annulus} using Lemma \ref{lemma_volumes}, part (b) (which may involve passing to a subsequence and restricting to almost all $\delta_1< \delta_2$) proves the claim, (\ref{eqn_limsup}). 

Next, we claim that, upon passing to a further subsequence,
\begin{equation}
\label{eqn_limsup2}
\limsup_{j \to \infty} \int_{\delta_1}^{\delta_2} \M (\partial (T_j^r \llcorner E_j^s))ds \leq \int_{\delta_1}^{\delta_2} \M (\partial (T \llcorner E^s))ds
\end{equation}
for \emph{almost} all $r \in (R-1,R)$ and almost all $\delta_1 < \delta_2$ in $[\delta_0, 0]$. By the definition of slice,
$$\langle T_j^r ,u_j,s\rangle = \partial (T_j^r \llcorner \{u_j \leq s\}) - (\partial T_j^r) \llcorner \{u_j \leq s\}.$$
Similarly,
\begin{align*}
\langle T ,u,s\rangle &= \partial (T \llcorner \{u \leq s\}) - (\partial T) \llcorner \{u \leq s\}\\
&= \partial (T \llcorner \{u \leq s\}),
\end{align*}
provided $s \leq \alpha$, for some $\alpha>0$, chosen so that $E^\alpha \subseteq B(q,R-1)$. (This is because the locally integral current underlying $M$ has zero boundary, and $\{u \leq s\} \subseteq B(q,R-1)$ is away from $\partial B(q,R)$ for $s>0$ sufficiently small.)
Using (\ref{eqn_limsup}) then we can establish (\ref{eqn_limsup2}) by showing:
\begin{equation}
\label{eqn_masses_zero}
\lim_{j \to \infty} \M\left((\partial T_j^r) \llcorner \{u_j \leq 0\} \right) = 0
\end{equation}
for almost all $r \in (R-1,R)$, upon passing to a subsequence. (The intuition behind \eqref{eqn_masses_zero} is that since $E^\delta$ has zero volume outside of radius $R-1$ and we have $\VF$ convergence, the volume of $E^\delta_j$ between radii $R-1$ and $R$ should converge to zero (as will be proved in \eqref{eqn_M_T_j}); thus, by slicing, \eqref{eqn_masses_zero} must hold. See Figure \ref{fig_E_j_annulus}.)

\begin{proof}[Proof of (\ref{eqn_masses_zero})]
 Recall $p_j \to q$ is assumed as part of the setup. 
 First we claim that 
 $$\varphi_{j\#}(T_j \llcorner (E^\delta_j \cap A(p_j, r_1,r_2))) \to \varphi_{\#}(T \llcorner (E^\delta \cap A(q,r_1,r_2))) \text{ in } d_Z^F$$
 for almost all $r_1 < r_2$ in $(R-1,R)$ and almost all $\delta \in [\delta_0,\alpha]$, upon passing to a subsequence. See Figure \ref{fig_E_j_annulus}. To see this, recall from Lemma \ref{lemma_convergence_balls_annuli}(b) that
for almost all $r_1<r_2$
$$\varphi_{j\#}(T_j \llcorner A(p_j,r_1,r_2)) \to  \varphi_{\#}(T \llcorner A(q,r_1,r_2))$$
in $d_Z^F$ (taking a subsequence). Then Lemma \ref{lemma_K_A_converge} part (a), applied with $T_j \llcorner A(p_j,r_1,r_2)$ in place of $T_j$, establishes the claim. 

\begin{figure}[ht]
	\begin{center}
		\includegraphics[scale=0.7]{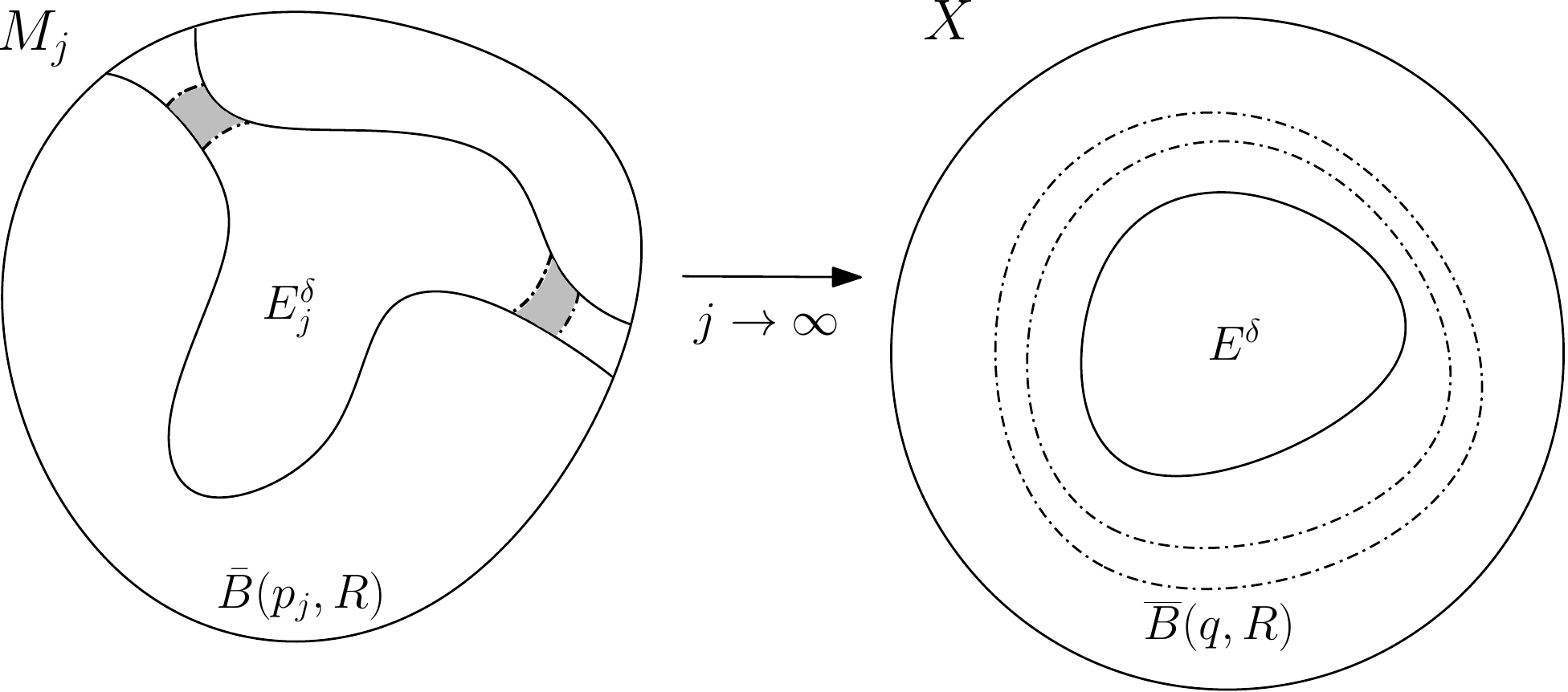}
		\caption{\small On the left the ball $\bar B(p_j, R) \subset M_j$ is shown, along with the subset $E^\delta_j$. The shaded region represents $E^\delta_j \cap A(p_j, r_1, r_2)$. On the right $\bar B(q, R) \subset X$ is shown. The set $E^\delta \cap A(q, r_1, r_2)$ is empty, where the region bounded by the dotted lines represent the annlus $A(q, r_1, r_2)$; this gives intuition for \eqref{eqn_M_T_j}.
			\label{fig_E_j_annulus}}
	\end{center}
\end{figure}

 Next, note that for  $r_1< r_2$ in $(R-1,R)$ and $\delta \leq \alpha$,  $T \llcorner (E^\delta \cap A(q, r_1,r_2)) = 0$ by the choice of $\alpha$ above (again, see Figure \ref{fig_E_j_annulus}). By $\VF$ convergence and Lemma \ref{lemma_volumes_general},  we then have
\begin{equation}
\label{eqn_M_T_j}
\M(T_j \llcorner (E^\delta_j \cap A(p_j, r_1,r_2))) \to 0
\end{equation}
for almost all values $r_1 < r_2$ in $(R-1,R)$ and almost all $\delta \in [\delta_0,\alpha]$.
By Theorem \ref{thm_slicing}, this implies
$$\int_{r_1}^{r_2} \M\langle T_j \llcorner E_j^\delta, \rho_j,r\rangle dr \to 0,$$
for almost all $\delta \in [\delta_0,\alpha]$, where $\rho_j$ is the distance from $p_j$ with respect to $d_{g_j}$. Thus, we may pass to an ($s$-dependent) subsequence so that
$$\M\langle T_j \llcorner E_j^s, \rho_j,r\rangle \to 0 \text{  pointwise a.e. in } r \in [r_1,r_2] \text{ as } j \to \infty;$$
pass to such a subsequence for some fixed $s \in [0,\alpha]$.
From the definition of slice and Lemma \ref{lemma_leibniz}(b),
\begin{align*}
\langle T_j \llcorner E_j^s, \rho_j,r\rangle &= \partial((T_j \llcorner E_j^s) \llcorner \{\rho_j \leq r\}) - (\partial (T_j \llcorner E_j^s)) \llcorner \{\rho_j \leq r\}\\
   &= \partial((T_j \llcorner \{u_j \leq s\}) \llcorner \{\rho_j \leq r\}) - (\partial (T_j \llcorner \{u_j \leq s\})) \llcorner \{\rho_j \leq r\}\\
   &= \partial(T_j \llcorner \{\rho_j \leq r\}) \llcorner \{u_j \leq s\} - (\partial T_j) \llcorner \{\rho_j \leq r\} \llcorner \{u_j \leq s\}\\
   &= (\partial T_j^r) \llcorner \{u_j \leq s\}, 
\end{align*}
if $r <R$, having used the fact that $\partial T_j \llcorner  \{\rho_j \leq r\}  = 0$ (since $\partial T_j$ is supported in $\partial B(p_j,R)$). In particular,
$$\lim_{j \to \infty} \M((\partial T_j^r) \llcorner \{u_j \leq s\}) = 0$$
for almost all $r \in [r_1,r_2]$. Since $s \geq 0$, this implies
$$\lim_{j \to \infty} \M((\partial T_j^r )\llcorner \{u_j \leq 0\}) = 0$$
for almost all $r \in [r_1,r_2]$.
Since the above argument holds for almost all $r_1 < r_2$ in $(R-1,R)$, we have established \eqref{eqn_masses_zero} for almost all $r \in (R-1,R)$.
\end{proof}

With \eqref{eqn_masses_zero} proven, \eqref{eqn_limsup2} follows from \eqref{eqn_limsup}.

Finally, we can complete the proof of the proposition, using (\ref{eqn_limsup2}), as follows. We have from Lemma \ref{lemma_convergence_balls_annuli} that, upon passing to a subsequence,
$$\varphi_{j\#}(T_j^r) \to \varphi_{\#} (T \llcorner B(q,r))$$
in $d_Z^F$ for almost all $r \in (0,R)$. Then by Lemma \ref{lemma_K_A_converge}, upon passing to a subsequence,
$$\varphi_{j\#}(T_j^r \llcorner E_j^s) \to \varphi_{\#} ((T \llcorner B(q,r))\llcorner E^s)$$
in $d_Z^F$ for almost all $r \in (0,R)$ and almost all $s \in \R$. But for $r > R-1$ and $s \leq \alpha$, 
$(T \llcorner B(q,r))\llcorner E^s = T \llcorner E^s$ (since $E^s \subseteq B(q,R-1)$). In particular, using Lemma \ref{lemma_volumes_general}, we have
\begin{equation}
\label{eqn_T_j_VF}
\varphi_{j\#}(T_j^r \llcorner E_j^s) \to \varphi_{\#} (T \rst E^s),
\end{equation}
in $d_Z^F$, and their masses converge, for almost all $r \in (R-1,R)$ and almost all $s \leq \alpha$. Since $ T_j^r \llcorner E_j^\delta = T_j \llcorner E^{\delta,r}_j$, this implies \eqref{eqn_flat_limit}, \eqref{eqn_mass_limit} and therefore  that the boundaries converge in $\mathcal{F}$. By lower semicontinuity of $\M$ (i.e., \eqref{eqn_M_LSC})
\begin{equation}
\label{eqn_lsc_bdry_mass}
\liminf_{j \to \infty} \M(\partial (T_j^r \llcorner E_j^s)) \geq \M(\partial (T \llcorner E^s))
\end{equation}
for  almost all $r \in (R-1,R)$ and almost all $s \leq \alpha$.

Fix arbitrary values $\delta_1 < \delta_2$ in $[\delta_0, 0]$ and $r \in (R-1,R)$ for which (\ref{eqn_limsup2}) holds. Suppose that the subset of $[\delta_1, \delta_2]$ on which strict inequality in (\ref{eqn_lsc_bdry_mass}) holds has positive measure. Then by Fatou's Lemma,
\begin{align*}
\liminf_{j \to \infty} \int_{\delta_1}^{\delta_2} \M(\partial (T_j^r \llcorner E_j^s)) ds &\geq  \int_{\delta_1}^{\delta_2}\liminf_{j \to \infty} \M(\partial (T_j^r \llcorner E_j^s)) ds\\
>  \int_{\delta_1}^{\delta_2}\M(\partial (T \llcorner E^s)) ds.
\end{align*}
This contradicts (\ref{eqn_limsup2}).
Since the above arguments hold for almost all $\delta_1 < \delta_2$ in $[\delta_0, 0]$,  \eqref{eqn_perim_limit}  follows, completing the proof Proposition \ref{prop_perimeters_converge}.
\end{proof}

\begin{proof}[Proof of Proposition \ref{prop_perimeters_converge2}]
We largely reuse the proof of Proposition \ref{prop_perimeters_converge} here. In this proof, $\delta_0 < 0$ is chosen analogously, i.e. so that $\{\delta_0 \leq u \leq 0\} \subset X \setminus K$ is contained in the set on which $u$ is $C^1$ and
$(1+\gamma)^{-1} \leq |du|_g \leq (1+\gamma)$ is satisfied.

Using $\Lip(u) \leq (1+\gamma)$, the same reasoning as \eqref{eqn_annulus_j} gives
$$(1+\gamma)^{-1}  \int_{\delta_1}^{\delta_2} \M\langle T_j^r ,u_j,s\rangle \leq \mathbb{M}(T_j \llcorner A_j^{\delta_1,\delta_2}).$$
Using the bounds on $|du|_g$ and Lemma \ref{lemma_mass_measure}, the same reasoning as \eqref{eqn_annulus} gives, for $\delta_1 < \delta_2$ in $[\delta_0,0]$,
$$\mathbb{M}(T \llcorner A^{\delta_1,\delta_2}) \leq (1+\gamma) \int_{\delta_1}^{\delta_2} \M \langle T, u, s\rangle ds.$$
Combining these with Lemma \ref{lemma_volumes}(b), we may pass to a subsequence so that for almost all $\delta_1 < \delta_2$ in $[\delta_0,0]$, we have
\begin{equation}
\label{eqn_limsup3}
\limsup_{j \to \infty} \int_{\delta_1}^{\delta_2} \M \langle T_j^r, u_j,s\rangle ds \leq (1+\gamma)^2 \int_{\delta_1}^{\delta_2} \M \langle T,u,s\rangle ds.
\end{equation}
By the same logic as in the proof of \eqref{eqn_limsup2}, we have
\begin{equation}
\label{eqn_limsup4}
\limsup_{j \to \infty} \int_{\delta_1}^{\delta_2} \M (\partial (T_j^r \llcorner E_j^s))ds \leq (1+\gamma)^2 \int_{\delta_1}^{\delta_2} \M (\partial (T \llcorner E^s))ds
\end{equation}
for almost all $r \in (R-1,R)$ and almost all $\delta_1 < \delta_2$ in $[\delta_0, 0]$, upon passing to a subsequence.  Fix such values of $r,\delta_1$, and $\delta_2$.
Note that  \eqref{eqn_T_j_VF} and \eqref{eqn_lsc_bdry_mass}
continue to hold for  almost all $r \in (R-1,R)$ and almost all $s \leq \alpha$.

Suppose that the set of values $\delta \in [\delta_1, \delta_2]$ for which
$$\liminf_{j \to \infty} \M(\partial (T_j \llcorner E_j^{\delta,r}))  > (1+\gamma)^2 \M(\partial (T \llcorner E^\delta))$$
has full measure. By Fatou's Lemma,

\begin{align*}
\liminf_{j \to \infty} \int_{\delta_1}^{\delta_2} \M(\partial (T_j \llcorner E_j^{s,r}))ds &\geq\int_{\delta_1}^{\delta_2} \liminf_{j \to \infty}  \M(\partial (T_j \llcorner E_j^{s,r}))ds\\
&>(1+\gamma)^2 \int_{\delta_1}^{\delta_2}  \M(\partial (T \llcorner E^s))ds.
\end{align*}
This contradicts \eqref{eqn_limsup4}, so that \eqref{eqn_perim_limit2} holds for a set of positive measure in $[\delta_1, \delta_2]$. Since the above argument applies for almost all $\delta_1 < \delta_2$ in $[\delta_0, 0]$, \eqref{eqn_perim_limit2} follows for a dense set of $\delta$ in $[\delta_0,0]$. Statements \eqref{eqn_flat_limit2} and \eqref{eqn_mass_limit2} follow without modification from the proof of the previous proposition.
\end{proof}

We conclude section \ref{sec_regions} by pointing out that there actually exist functions $u$ satisfying the hypotheses of Proposition \ref{prop_perimeters_converge2}. This is done in Lemma \ref{lemma_u} below. (It is not so clear there exists a function $u$ satisfying the hypothesis of Proposition \ref{prop_perimeters_converge} if $g$ is only $C^0$.) We continue with the setup given at the start of this subsection, \ref{sec_converge_perim}. 	In particular, $g$ and $d$ are locally compatible on $X \setminus K$, and $\partial E \subset X \setminus K$ is smooth.

We first require a lemma:
\begin{lemma}
For $a > 0$, let
	$$\mathcal{U}_a = \left\{x \in X \; : \; d(x,\partial E) < a \right\}.$$
	\begin{enumerate}
		\item[(a)] For all $a > 0$ sufficiently small, $\overline{\mathcal{U}}_a$ is disjoint from $K$.
		\item[(b)] For all $a > 0$ sufficiently small, it holds that for all $x \in \mathcal{U}_a$, $d(x,\partial E)= d_g(x, \partial E)$.
	\end{enumerate}
\end{lemma}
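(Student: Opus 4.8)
The plan is to treat (a) and (b) in order, the second relying on the first, exploiting that $\partial E$ and $K$ are both compact with $\partial E \subset X \setminus K$.

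For (a): since $\partial E$ and $K$ are disjoint compact subsets of the metric space $(X,d)$, their distance $\eta := \inf\{ d(y,z) : y\in\partial E,\ z\in K\}$ is strictly positive (the infimum is attained by compactness and cannot vanish without the sets meeting). Because $x\mapsto d(x,\partial E)$ is $1$-Lipschitz, $\overline{\mathcal{U}}_a \subseteq \{x : d(x,\partial E)\le a\}$. For such an $x$ I would pick, using compactness of $\partial E$, a point $y\in\partial E$ with $d(x,y)=d(x,\partial E)\le a$; then $d(x,z)\ge d(y,z)-d(x,y)\ge \eta-a$ for every $z\in K$, so $d(x,K)\ge\eta-a>0$ as soon as $a<\eta$. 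Hence $\overline{\mathcal{U}}_a\cap K=\emptyset$ for all $a<\eta$; in particular $\overline{\mathcal{U}}_a\subset X\setminus K$.

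For (b): the inequality $d(x,\partial E)\le d_g(x,\partial E)$ is free. Indeed, by (a), for $a<\eta$ every $x\in\mathcal{U}_a$ lies in $X\setminus K$, where $d_g$ is defined and, by local compatibility, $d\le d_g$ (this is \eqref{eqn_d_d_g} applied to $\overline{X\setminus K}$); taking the infimum over $\partial E\subset X\setminus K$ gives the claim. For the reverse inequality I would produce a uniform ``compatibility radius'' near $\partial E$: for each $p\in\partial E$ the local compatibility hypothesis supplies a $d$-open neighborhood on which $d=d_g$, hence a $d$-ball $B_d(p,r_p)$ that is a neighborhood of compatibility (any $d$-open subset of one is one); by compactness finitely many $B_d(p_i,r_{p_i}/2)$ cover $\partial E$, and a one-line triangle-inequality argument shows that, with $r:=\tfrac12\min_i r_{p_i}$ (shrunk below $\eta$ if necessary), $B_d(y,r)$ is a neighborhood of compatibility for every $y\in\partial E$. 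Then for $a<r$ and $x\in\mathcal{U}_a$, choosing $y_0\in\partial E$ with $d(x,y_0)=d(x,\partial E)<a<r$, both $x$ and $y_0$ sit in $B_d(y_0,r)$, so $d(x,y_0)=d_g(x,y_0)$ and therefore $d_g(x,\partial E)\le d_g(x,y_0)=d(x,y_0)=d(x,\partial E)$. Combining the two inequalities proves (b).

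The routine ingredients are the attainment of $d(x,\partial E)$ via compactness of $\partial E$ and the elementary triangle-inequality bookkeeping that yields the uniform radius $r$. The only point requiring care is the logical dependence of (b) on (a): one must first know $\mathcal{U}_a$ (indeed $\overline{\mathcal{U}}_a$) lies in $X\setminus K$ before the Riemannian distance $d_g$ and the local compatibility of $d$ with $g$ may be invoked at all. I do not expect any genuine obstacle beyond this.
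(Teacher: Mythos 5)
Your proof is correct and follows essentially the same route as the paper: disjointness of the compact sets $K$ and $\partial E$ gives a positive separation for (a), and for (b) a finite cover of $\partial E$ by neighborhoods of compatibility plus a triangle-inequality argument shows that the nearest point $q\in\partial E$ to $x$ lies in a common compatibility neighborhood with $x$. The only (cosmetic) difference is that you first extract a uniform compatibility radius $r$ around all of $\partial E$ and then choose $a<r$, whereas the paper's proof works directly with a cover by balls $B_d(p_i,2r_i)$ and argues that both $x$ and its nearest point $q$ land in the larger ball $B_d(p_i,4r_i)$; the two arguments are interchangeable.
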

\begin{proof}
	The first part is elementary, since $K$ is (sequentially) compact and $\partial E$ is closed. 
	
	Now, cover $\partial E$ by finitely many balls $B_d(p_i, r_i)$, where $p_i \in \partial E$ and $B_d(p_i, 4r_i)$ is a neighborhood of compatibility for $g$ and $d$ (see Definition \ref{def_locally_compatible}). Let $a>0$ be sufficiently small so that $\mathcal{U}_a$ is contained in $\cup_i B_d(p_i, 2r_i)$ (e.g., $a=\min_i \{r_i\}$).
	
	Clearly $d(x, \partial E) \leq d_g(x, \partial E)$, as in \eqref{eqn_d_d_g}, for all $x \in X \setminus K$. To show the reverse inequality in $\mathcal{U}_a$, let $x \in \mathcal{U}_a$, so that $x$ belongs to some $B_d(p_i, 2r_i)$. Since $\partial E$ is compact, there exists some $q \in \partial E$ such that $d(x,\partial E) = d(x,q)$. Since $p_i$ also belongs to $\partial E$, we have $d(x,p_i) \geq d(x,q)$. Then
	$$d(p_i, q) \leq d(p_i, x) + d(x,q) \leq 2 d(p_i, x) < 4r_i.$$
	Thus, $q \in B_d(p_i, 4r_i)$, a neighborhood of compatibility, so that $d_g(x,q) = d(x,q) = d(x, \partial E)$. By the definition of $d_g(x,\partial E)$, we see that
	$d_g(x, \partial E) \leq d(x, \partial E)$, which completes the proof. 
\end{proof}

\begin{lemma}
\label{lemma_u}
Given any $\gamma>0$, there exists a function $u$ on $X$ with $Lip_d(u) \leq 1+\gamma$, such that $u>0$ in $X \setminus E$, $u \leq 0$ in $E$,  $u^{-1}(0)=\partial E$, and there exists a neighborhood $W$ of $\partial E$ on which $u$ is smooth and
$$(1+\gamma)^{-1} \leq |du|_g \leq 1+ \gamma$$
is satisfied on $W$.
\end{lemma}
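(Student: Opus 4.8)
The plan is to build $u$ by grafting into a neighborhood of $\partial E$ the signed distance function of a smooth approximation to $g$, and then extending it to all of $X$ by one‑sided $\inf$/$\sup$‑convolutions arranged so that the Lipschitz constant stays below $1+\gamma$ while the zero set remains exactly $\partial E$.

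First I would localize using the preceding lemma: fix $a>0$ so small that $\overline{\mathcal{U}}_a$ is disjoint from $K$, that $d(x,\partial E)=d_g(x,\partial E)$ for all $x\in\mathcal{U}_a$, that $\{d(\cdot,\partial E)\le a/2\}$ is covered by finitely many neighborhoods of compatibility (as in the proof of that lemma, with covering constant $\rho_0>0$), and that $\mathcal{U}_a$ lies in a tubular neighborhood of the smooth compact hypersurface $\partial E$. Since $g$ is only $C^0$ but $\partial E$ is smooth and compact, choose a smooth metric $\tilde g$ near $\overline{\mathcal{U}}_a$ with $(1+\eta)^{-2}g\le\tilde g\le(1+\eta)^2 g$ as quadratic forms, where $\eta\in(0,\gamma]$ is to be fixed. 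Let $v$ be the signed $\tilde g$‑distance to $\partial E$, negative on the side inside $E$. For $a$ small, $v$ is smooth on $\mathcal{U}_a$, $v^{-1}(0)=\partial E$, $v<0$ on $\mathcal{U}_a\cap E^\circ$, $v>0$ on $\mathcal{U}_a\setminus E$, and $|dv|_{\tilde g}\equiv 1$; dualizing the comparison of $\tilde g$ with $g$ gives $(1+\eta)^{-1}\le|dv|_g\le 1+\eta$ on $\mathcal{U}_a$. To bound $\Lip_d(v|_{\mathcal{U}_a})$: if $x,y$ lie in a common neighborhood of compatibility then $|v(x)-v(y)|\le d_{\tilde g}(x,y)\le(1+\eta)d_g(x,y)=(1+\eta)d(x,y)$; if not, then $d(x,y)\ge\rho_0$ while $|v(x)-v(y)|\le 2(1+\eta)a$ (since $|v|\le(1+\eta)d(\cdot,\partial E)$ on $\mathcal{U}_a$), so shrinking $a$ makes the ratio at most $1+\gamma$. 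Hence $\Lip_d(v|_{\mathcal{U}_a})\le 1+\gamma$.

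For the global extension, set $S_+=\{d(\cdot,\partial E)\le a/2\}\cap(X\setminus E^\circ)$ and $S_-=\{d(\cdot,\partial E)\le a/2\}\cap E$, which are closed with $v\ge0$ on $S_+$, $v\le0$ on $S_-$, and $v>0$, resp. $v<0$, off $\partial E$. Put
$$w_+(x)=\inf_{y\in S_+}\big[v(y)+(1+\gamma)d(x,y)\big],\qquad w_-(x)=\sup_{y\in S_-}\big[v(y)-(1+\gamma)d(x,y)\big].$$
These are $(1+\gamma)$‑Lipschitz on all of $X$ (inf/sup of $(1+\gamma)$‑Lipschitz functions), satisfy $w_+\ge0\ge w_-$, and agree with $v$ on $S_+$, resp. $S_-$ (using $\Lip_d(v|_{\mathcal{U}_a})\le 1+\gamma$); moreover $w_+>0$ on $X\setminus E$ and $w_-<0$ on $E^\circ$, since on $\{d(\cdot,\partial E)\le a/2\}$ one has $w_\pm=v$, and at a point outside the closed set $S_\pm$ the distance to $S_\pm$ is positive. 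Now define $u:=w_+$ on $X\setminus E$ and $u:=w_-$ on $E$. Because $w_+=w_-=v$ on $S_+\cap S_-=\partial E$, the pieces agree along $\partial E=\overline{X\setminus E}\cap E$, so $u$ is continuous; by construction $u>0$ on $X\setminus E$, $u<0$ on $E^\circ$, $u=0$ on $\partial E$, hence $u^{-1}(0)=\partial E$; and on the open neighborhood $W:=\mathcal{U}_{a/2}$ of $\partial E$ we have $u=v$, which is smooth with $|du|_g=|dv|_g\in[(1+\eta)^{-1},1+\eta]\subseteq[(1+\gamma)^{-1},1+\gamma]$.

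The step I expect to be the real obstacle is the global bound $\Lip_d(u)\le 1+\gamma$. When $x,y$ lie on the same side of $\partial E$ it is immediate from $\Lip_d(w_\pm)\le1+\gamma$. When $x\in X\setminus E$ and $y\in E^\circ$, the lower estimate $u(x)-u(y)=w_+(x)-w_-(y)\ge0\ge-(1+\gamma)d(x,y)$ is trivial, while for the upper estimate one inserts $z\in\partial E$ and uses $w_+(z)-w_-(z)=0$ to get $w_+(x)-w_-(y)\le(1+\gamma)\big(d(x,z)+d(z,y)\big)$, so the whole thing comes down to the inequality $\inf_{z\in\partial E}\big(d(x,z)+d(z,y)\big)\le d(x,y)$. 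This is where one uses that $(X,d)$ is a length space, so a near‑minimizing path from $X\setminus E$ to $E^\circ$ — which must meet $\partial E$ — produces such a $z$ (near $\partial E$ this length property is forced by local compatibility with $g$). Fixing $\eta\le\gamma$ throughout, the verification then goes through and yields the desired $u$.
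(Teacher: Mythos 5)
Your local construction (a smooth metric $\tilde g$ uniformly close to $g$ near $\partial E$, its signed distance $v$, and the two-case Lipschitz estimate via a finite cover by neighborhoods of compatibility with a Lebesgue-number threshold) is essentially the paper's argument, and the small mismatches there (the cover is of $\{d(\cdot,\partial E)\le a/2\}$ while the bound is asserted on $\mathcal{U}_a$; smoothness of $v$ only within the reach of $\partial E$ for $\tilde g$; the order in which $a$, the cover, and $\tilde g$ are chosen) are fixable bookkeeping. The genuine gap is exactly the step you flagged: the cross-boundary Lipschitz estimate, which you reduce to $\inf_{z\in\partial E}\bigl(d(x,z)+d(z,y)\bigr)\le d(x,y)$ and justify by asserting that $(X,d)$ is a length space. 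Nothing in Definition \ref{def_AF_LICS} or in the setup of section \ref{sec_converge_perim} gives you that. Local compatibility is a purely local condition and does not rule out global ``shortcuts'': the paper's own example after Definition \ref{def_locally_compatible} ($\R^n$ minus a ball with the restricted ambient Euclidean distance) is a locally compatible $d$ that is not intrinsic, and pointed intrinsic flat limits likewise need not be length spaces, since the limit metric is a restricted metric from the common space $Z$ and can be shortened through points that are not in $\set(T)$. Moreover, for your particular extension the problem is not merely an unjustified step: if there is a pair $x\in X\setminus E$, $y\in E^{\circ}$, both at $d$-distance of order $1$ from $\partial E$ but with $d(x,y)$ tiny, then $w_+(x)\gtrsim(1+\gamma)d(x,S_+)$ and $w_-(y)\lesssim-(1+\gamma)d(y,S_-)$, so $u(x)-u(y)$ is of order $1$ while $d(x,y)$ is tiny; your $u$ really fails to be $(1+\gamma)$-Lipschitz, because the inf/sup-convolutions force $|u|$ to grow linearly with the distance to the collar.

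This is precisely why the paper does not extend by distance-like functions but instead caps the locally defined function off by constants slightly above and below $0$: outside a small collar of $\partial E$ the extension is constant $\pm\epsilon$, so across any potentially ``shortcut'' pair the oscillation is at most $2\epsilon$, and one only needs a positive lower bound on the distance between the inner and outer plateau regions. That lower bound follows from compactness (the inner plateau is a compact subset of the interior of $E$, hence at positive $d$-distance from $\overline{X\setminus E}$, and symmetrically for the outer one relative to the collar), together with local compatibility plus the fact that $d_g$ is a path-length metric to handle pairs near $\partial E$; then $\epsilon$ is chosen small relative to these quantities. If you want to salvage your route, you must either prove a length-space (or at least ``crossing'') property for the specific limit spaces at hand — which is a substantial claim the paper deliberately avoids — or replace the unbounded inf/sup-convolution extension by a capped extension of the above type.
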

Note that $u$ and $W$ depend on $\gamma$.

\begin{proof}
	Initially, we let $u$ be the signed distance function in $X$ to $\partial E$, with respect to $d$, i.e.
	\begin{equation}
	\label{eqn_signed_distance}
	u(x) = \begin{cases}
	d(x, \partial E), & \text{if } x \in X \setminus E\\
	-d(x, \partial E),& \text{if }  x \in E.
	\end{cases}
	\end{equation}
	For $a>0$, note
	$$\mathcal{U}_a = u^{-1}(-a, a).$$
	By the previous lemma, we may fix $a>0$ sufficiently small so that $\overline{\mathcal{U}}_a \subset X \setminus K$ and so that the signed distance function $u_g$ to $\partial E$ on $X \setminus K$  with respect to $d_g$ agrees with $u$ on $\mathcal{U}_a$. Let $\mathcal{U} = \mathcal{U}_a$. Note that $\overline{\mathcal{U}} = u^{-1}[-a, a]$ is compact. If $g$ happens to be smooth, then $u$ is the desired function, and the proof is complete (with $\gamma=0$) (since $u_g$ is smooth with $|d u_g|_g=1$ near $\partial E$).

	For each $p \in \overline{\mathcal{U}}$ there exists $r=r(p)>0$ such that $B_d(p,r) \subset X \setminus K$ is a neighborhood of compatibility for $g$ and $d$. This produces  an open cover and hence a finite subcover of $\overline{\mathcal{U}}$ by such neighborhoods; denote the latter by $\{B_d(p_i, r_i)\}_{i=1}^N$ . Let $\delta > 0$ denote the Lebesgue number of this cover, so that every subset of $\overline{\mathcal{U}}$ of diameter less than $\delta$ is contained in some $B_d(p_i, r_i)$. Let $b = \min\left\{\frac{\delta}{2}, 0.9 a\right\}$, and let  $\mathcal{U}' = u^{-1}(-b,b)$, so that $\overline{\mathcal{U}'} \subset \mathcal{U}$.
	
	Let $\gamma>0$. Let $\{g_i\}$ be a sequence of continuous Riemannian metrics on $X \setminus K$, equalling $g$ on $(X \setminus K) \setminus \mathcal{U}$, with $g_i$ smooth on $\mathcal{U}'$, and such that $g_i$ converges to $g$ in $C^0$ as $i \to \infty$. We fix some $i$ sufficiently large so that 
	\begin{equation}
	\label{eqn_norm_g_i}
	(1+\gamma)^{-1} |v|_g \leq |v|_{g_i} \leq (1+\gamma)|v|_g 
	\end{equation}
	on all tangent vectors $v$ to $X \setminus K$. In particular,
	the distance functions  $d_{g_i}$ and $d_g$ are uniformly equivalent:
	\begin{equation}
	\label{eqn_uniformly_equiv}
	(1+\gamma)^{-1} d_{g}(x,y) \leq d_{g_i}(x,y) \leq (1+\gamma) d_g(x,y)
	\end{equation}
	for all $x,y \in X \setminus K$.
	Let $u_i$ be the signed distance function to $\partial E$ with respect to $d_{g_i}$, defined on $X \setminus K$, as in \eqref{eqn_signed_distance}, with $d_{g_i}$ in place of $d$ . Of course $\Lip_{d_{g_i}}(u_i) =1$, so that by \eqref{eqn_uniformly_equiv}, $\Lip_{d_g} (u_i) \leq 1+\gamma$ on $X \setminus K$. Increasing $i$ if necessary, we can also arrange that
	\begin{equation}
	\label{eqn_u_u_i}
	(1+\gamma)^{-1} |u(x)| \leq |u_i(x)| \leq (1+\gamma)|u(x)|
	\end{equation}
	for all $x \in \mathcal{U}$, by \eqref{eqn_uniformly_equiv} and because $u=u_g$ on $\mathcal{U}$.
	
	We claim that $u_i$ has Lipschitz constant on $\mathcal{U}'$ with respect to $d$ of at most $1+\gamma$; this is not obvious since $d$ and $d_g$ are in general unequal. Let $x,y \in \mathcal{U}'$ with $x \neq y$. We consider two cases. First, if $d(x,y) < \delta$, then the diameter of $\{x ,y\} \subset \mathcal{U}$ with respect to $d$ is less than $\delta$, so that $x,y$ belong to a neighborhood of local compatibility. Then
	$$\frac{|u_i(x) - u_i(y)|}{d(x,y)} = \frac{|u_i(x) - u_i(y)|}{d_g(x,y)} \leq \Lip_{d_g} (u_i) \leq 1+ \gamma.$$
	Second, if $d(x,y) \geq \delta$, then by \eqref{eqn_u_u_i} and the definitions of $b$ and $\mathcal{U}'$,
	$$\frac{|u_i(x) - u_i(y)|}{d(x,y)}  \leq \frac{(1+\gamma)(|u(x)| + |u(y)|)}{\delta}  \leq \frac{(1+\gamma)(2b)}{\delta} < 1+\gamma.$$
	Thus, we have $\Lip_d (u_i|_{\mathcal{U}'}) \leq 1$.
	
	Now, since $g_i$ is smooth near $\partial E$, there exists a neighborhood of $\partial E$, contained in $\mathcal{U}'$ on which the signed distance function $u_i$ is smooth, and $|d u_i|_{g_i} = 1$.
	By \eqref{eqn_norm_g_i}, we have
	$$(1+\gamma)^{-1} \leq  |du_i|_g \leq (1+\gamma)$$
	on this neighborhood.
	
	Note that by definition, $u_i^{-1}(0)= \partial E$. We complete the proof by noting that $u_i|_{\mathcal{U}'}$
	can be extended off of $\mathcal{U}'$ to a function $\tilde u:X \to \R$, with $\Lip_d(\tilde u) \leq 1+\gamma$, such that $u_i^{-1}(0) = \partial E$ and $u_i^{-1} (-\infty, 0] = E$; $\tilde u$ is then the desired function. Explicitly, $\tilde u$ can be constructed by capping off $u_i$ by constants slightly above and below 0.
\end{proof}

\section{Proof of Theorem \ref{thm1}}
\label{sec_main_proof}

Let $N_j$ denote the AF local integral current space corresponding to $(M_j, g_j)$. 
Let $m_j \geq 0$ be the ADM mass of $(M_j, g_j)$, and assume $\mu:=\displaystyle\liminf_{j \to \infty} m_j$ is finite (or else the claim trivially follows). Pass to a subsequence (without changing the indexing notation) so that $\displaystyle\lim_{j \to \infty} m_j = \mu \in [0, \infty)$ and so that $m_j \leq \mu+1$ for all $j$; obviously these statements are preserved upon taking further subsequences.

As in the statement of the theorem, we assume that the isoperimetric constants of $(M_j, g_j)$ are uniformly bounded below by some constant $c>0$. Let $C>0$ be the constant in Theorem \ref{thm17} corresponding to upper bound $\mu+1$ for the ADM mass, upper bound $9\sqrt{\pi}$ for the isoperimetric ratio, and lower bound $c$ for the isoperimetric constants.

We first deal with the case in which $\miso(N) \in (0, \infty)$. (If $\miso(N) \leq 0$, there is nothing to prove; the case in which $\miso(N)=\infty$ will be addressed at the end by a simple modification.) 

Let $\epsilon > 0$. Choose a number $\nu_0>0$ sufficiently large so that
\begin{equation}
\label{eqn_nu_0}
\frac{2^{1/3}C}{(c\nu_0)^{1/3}} < \frac{\epsilon}{3}
\end{equation}
and
\begin{equation}
\label{eqn_nu_0'}
\left(\frac{c\nu_0}{2}\right)^{2/3} \geq 36 \pi (\mu+1)^2.
\end{equation}

Fix a compact set $K \subset X$ as in Definition \ref{def_AF_LICS}. Using the definition of isoperimetric mass, we take a compact set $E \subset X$ containing $K$, with $\partial E$ contained in the open AF end $X \setminus K$, so that
\begin{equation}
\label{eqn_iso_mass_est}
\miso(N) < \miso (E)+\frac{\epsilon}{3},
\end{equation}
and the $g$-volume $|E|$ is at least $\nu_0 + 2$. Note that without loss of generality, by \cite[Lemma 16]{JL}, we may use compact sets in place of open sets in the definition of $\miso$, assume $E$ has isoperimetric ratio $I(E,g)$  at most $7\sqrt{\pi}$ and that $\partial E$ is smooth (see also Lemma \ref{lemma_approx_perimeter} and Corollary \ref{cor_smoothing} in the appendix). 
Here, the perimeter of $E$ is well-defined since $\partial E \subset X \setminus K$, and the $g$-volume of $E$ is interpreted as $\|T\|(E) = \|T\|(K) + |E \setminus K|_g$, by Lemma \ref{lemma_mass_measure}(a).

Let $A_0 = |\partial^* E|$ and $V_0= |E|$. In particular,
$$ \miso (E) = \frac{2}{A_0} \left( V_0 - \frac{1}{6\sqrt{\pi}} A_0^{3/2}\right)$$
and $I(E,g) = A_0^{3/2}/ V_0 \leq 7 \sqrt{\pi}$.
Choose a real number $\gamma>0$ sufficiently small, so that for all real numbers $A$ within a factor of $(1+\gamma)^4$ of $ A_0$ and all real numbers $V$ within a factor of $(1+\gamma)^2$ of $V_0$, we have
\begin{equation}
\label{eqn_miso_estimate}
\miso ( E) - \frac{\epsilon}{3} < \frac{2}{A} \left( V - \frac{1}{6\sqrt{\pi}} A^{3/2}\right)<  \miso ( E) + \frac{\epsilon}{3},
\end{equation}
and 
$$A^{3/2} / V \leq 8\sqrt{\pi}.$$ 
If necessary, shrink $\gamma$ so that it is less than the value $\gamma_0$ in Lemma \ref{lemma_mass_measure}(b) and Proposition \ref{prop_perimeters_converge2}.

By the definition of pointed intrinsic flat volume convergence, we may choose a value $R > 0$ so that $B(q,R-1)$ contains $E$,  $N_j \rst B(p_j,R) \toVF N\rst B(q,R)$, and $p_j \to q$.

 Return to the setup of section \ref{sec_converge_perim}, with the choice of a function $u$ as in Lemma \ref{lemma_u} (for the value of $\gamma$  and set $E$ we have chosen). Define the functions $u_j:M_j \to \R$  as in \eqref{eqn_u_j} and the sets $E_j^{\delta,r} \subset M_j$ and $E^\delta \subset X$ (see \eqref{E_j_delta_r} and \eqref{eqn_E_delta}).

Apply Proposition \ref{prop_perimeters_converge2} to obtain an appropriate value $\delta_0<0$.  Since $u$ is $C^1$  with $|du|_g \neq 0$ near $\partial E$ and $g$ is continuous, we have $|E^\delta|$ and $|\partial^* E^\delta|$ are continuous in $\delta$ near $\delta=0$.  In particular, we can shrink $|\delta_0|$ if necessary to be sure that
\begin{align}
|E^\delta| &\geq \nu_0+1, \nonumber\\
(1+\gamma)^{-1} V_0 &\leq |E^\delta| \leq (1+\gamma) V_0, \label{eqn_E_delta_vol}\\
(1+\gamma)^{-1} A_0 &\leq |\partial^* E^\delta| \leq (1+\gamma) A_0, \label{eqn_E_delta_perim} \text{ and}\\
\partial E^\delta &\subset X \setminus K
\end{align} 
 for all $\delta \in [\delta_0, 0]$. Now, fix a value of $\delta \in [\delta_0,0]$ and $r \in (R-1,R)$ (and pass to a subsequence) so that the conclusions of Proposition \ref{prop_perimeters_converge2} hold, and pass to a further subsequence so that \eqref{eqn_perim_limit2} holds with a limit in lieu of a liminf, i.e.
$$\M(\partial (T \llcorner E^\delta)) \leq \lim_{j \to \infty} \M(\partial (T_j \llcorner E_j^{\delta,r}))  \leq (1+\gamma)^2 \M(\partial (T \llcorner E^\delta)).$$
(Here, as in the proofs of Propositions \ref{prop_perimeters_converge} and \ref{prop_perimeters_converge2}, $T$ and $T_j$ represent the restrictions of the local integral currents on $M_j$ and $M$, restricted to $B(p_j,R)$ and $B(q,R)$.)

Truncating finitely many terms, we can be sure that
$$ (1+\gamma)^{-1} \M(\partial (T \llcorner E^\delta)) \leq\M(\partial (T_j \llcorner E_j^{\delta,r}))\\
  \leq (1+\gamma)^3 \M(\partial (T \llcorner E^\delta)),$$
for all $j$. Then, using the equivalence of boundary mass and perimeter (Lemma \ref{lemma_perimeter_bdry_mass}) as well as \eqref{eqn_E_delta_perim}, this implies
$$ (1+\gamma)^{-2}A_0 \leq |\partial^* E_j^{\delta,r}|_{g_j} \leq (1+\gamma)^4 A_0,$$
for all $j$.

Similarly, since the masses converge, we can truncate finitely many terms to arrange that, as in \eqref{eqn_mass_limit2},
$$ (1+\gamma)^{-1} \M(T \rst E^{\delta}) \leq \M(T_j \rst E_j^{\delta,r}) \leq (1+\gamma) \M(T \rst E^{\delta}) ,$$
holds for all $j$. Using the equivalence of volume and mass (Lemma \ref{lemma_mass_measure}(a)) and \eqref{eqn_E_delta_vol}, we therefore have
$$(1+\gamma)^{-2} V_0  \leq |E_j^{\delta,r}|_{g_j} \leq (1+\gamma)^2 V_0$$
for all $j$. Finally, we can also truncate finitely many terms if necessary to arrange $\M(T_j \rst E_j^{\delta,r} )$ (which equals $ |E_j^{\delta,r}|_{g_j}$) is at least $\nu_0$ for all $j$, since $|E^\delta| \geq \nu_0+1$.  In particular, $E^{\delta,r}_j$ satisfies the desired volume and perimeter bounds relative to $V_0$ and $A_0$ (i.e., has perimeter within a factor of $(1+\gamma)^4$ of $A_0$ and volume within a factor of $(1+\gamma)^2$ of $V_0$), and has volume at least $\nu_0$. Then  by \eqref{eqn_miso_estimate},
\begin{equation}
\label{eqn_miso_T_j}
\miso ( E_j^{\delta,r}, g_j) > \miso ( E,g) - \frac{\epsilon}{3},
\end{equation}
and $I(E_j^{\delta,r}, g_j) \leq 8\sqrt{\pi}$.

At this point, to simplify notation, let $E_j$ denote $E_j^{\delta,r}$ (for the values of $\delta$ and $r$ fixed above). Now, $E_j\subset M_j$ is not necessarily a smooth set, but it is compact, and we can perturb it to a compact subset $\hat E_j$ of $M_j$ with smooth boundary (for each $j$), changing the volume and perimeter, and hence the isoperimetric mass and isoperimetric ratio, by arbitrarily small amounts (Lemma \ref{lemma_approx_perimeter} in the appendix). In particular, we can arrange that
\begin{align}
|E_j|_{g_j} &<|\hat E_j|_{g_j} + \frac{1}{j}, \nonumber\\
\miso (E_j, g_j) &< \miso ( \hat E_j,g_j) + \frac{1}{j}, \text{ and} \label{eqn_miso_hat_E_j} \\
I(\hat E_j,g_j) & \leq 9\sqrt{\pi}. \nonumber
\end{align}

Now, let $\tilde E_j$ be the outermost minimizing hull\footnote{Recall that any bounded open set $\Omega$ in a smooth asymptotically flat 3-manifold $M$ admits a unique outermost minimizing hull, i.e. a bounded open set $\tilde \Omega \subset M$ with the least perimeter among all bounded open sets containing $\Omega$, and containing any other least-perimeter such sets. If $\partial \Omega$ is smooth, then $\partial \tilde \Omega$ is $C^{1,1}$. We refer the reader to \cite[section 1]{HI} for further details.} of $\hat E_j$ in $(M_j, g_j)$. Since $\tilde E_j$ has at least as much volume and as most as much perimeter as $\hat E_j$, we have
\begin{equation}
\label{eqn_miso_tilde_E_j} 
\miso (\hat E_j,g_j) \leq \miso (\tilde E_j,g_j).
\end{equation}
and
$$I(\tilde E_j,g_j) \leq I(\hat E_j,g_j) \leq 9\sqrt{\pi}.$$

The next step is to apply Theorem \ref{thm17} to $(M_j, g_j)$ with region $\tilde E_j$. All of the hypotheses clearly hold with the ADM mass upper bound of $\mu+1$, isoperimetric ratio upper bound of $9\sqrt{\pi}$, and isoperimetric constant lower bound of $c$ given at the start of the proof, though we must check that 
\begin{equation}
\label{eqn_tilde_E_j}
|\partial \tilde E_j|_{g_j} \geq 36\pi (\mu+1)^2.
\end{equation}
 To show this, by the isoperimetric inequality (with the uniform lower bound $c$ on isoperimetric constants)
\begin{equation}
\label{eqn_area_bound}
|\partial \tilde E_j|_{g_j}^{3/2} \geq c |\tilde E_j|_{g_j} \geq c |\hat E_j|_{g_j} > c\left( |E_j|_{g_j} - \frac{1}{j}\right)  \geq c\left( \nu_0 - \frac{1}{j}\right)
\end{equation}
In particular, after truncating finitely many terms,
\begin{equation}
\label{eqn_c_nu}
|\partial \tilde E_j|^{3/2} \geq \frac{c\nu_0}{2}.
\end{equation}
With (\ref{eqn_nu_0'}), this establishes \eqref{eqn_tilde_E_j}.

Thus, by Theorem \ref{thm17} with the constant $C$ determined at the start of the proof, we have
$$\miso (\tilde E_j) \leq m_j + \frac{C}{\sqrt{|\partial \tilde E_j}|_{g_j}}$$
Finally, by (\ref{eqn_nu_0}) and \eqref{eqn_c_nu}, this immediately gives:
\begin{equation}
\label{eqn_m_j}
\miso (\tilde E_j) < m_j + \frac{\epsilon}{3}.
\end{equation}

Combining \eqref{eqn_iso_mass_est}, \eqref{eqn_miso_T_j}, \eqref{eqn_miso_hat_E_j}, \eqref{eqn_miso_tilde_E_j}, and \eqref{eqn_m_j}, we have
$$\miso(N) \leq m_j + \epsilon + \frac{1}{j}.$$
Taking $\lim_{j \to \infty}$ now proves Theorem \ref{thm1} in the case that $\miso(N) < +\infty$, since $\epsilon$ was arbitrary.

In the case that $\miso(N)=+\infty$, a similar argument works, upon replacing \eqref{eqn_iso_mass_est} with
$$\miso (E)> \epsilon^{-1},$$
which leads to contradiction of the assumption that $\displaystyle \liminf_{j \to \infty} m_j$ was finite.
This completes the proof of Theorem \ref{thm1}. \qed

\begin{remark} We discuss here the hypotheses of Theorem \ref{thm1}.

Nonnegativity of scalar curvature and the absence of compact minimal surfaces are both necessary, even for pointed $C^2$ Cheeger--Gromov convergence \cite{Jau}.

The  lower bound on isoperimetric constants is used in two places: in the application of Theorem \ref{thm17} and in \eqref{eqn_area_bound}. It would be interesting to remove this hypothesis; we have no examples to indicate it is necessary. We point out that the isoperimetric inequality is only used for regions whose perimeters are small relative to the mass scale, i.e. for perimeters less than $36\pi (\mu+1)^2$, $\mu$ being the limit of the masses. This is apparent for \eqref{eqn_area_bound}: if this inequality holds already, there is no need to use the isoperimetric inequality. In Theorem \ref{thm17}, $C$ only depends on the isoperimetric constant for regions of area at most $36\pi (\mu+1)^2$ (see the proofs of Lemma 32, Lemma 33, and Theorem 17 in \cite{JL}). 

It would also be interesting to investigate whether pointed $\VF$ convergence can be replaced with pointed $\F$ convergence, as we again have no examples to suggest volume convergence is necessary. Volume convergence is needed for our proof, however, to establish the almost-convergence of perimeters (Proposition \ref{prop_perimeters_converge2}).

Finally, it is likely possible to generalize Theorem \ref{thm1} to a broader definition of asymptotically flat local integral current space, e.g., without assuming a manifold structure at infinity, but we do not pursue this here.
\end{remark}

\section*{Appendix: perimeter and boundary mass}
Here we recall the definition of the perimeter of a set in a Riemannian manifold, including the case in which the metric is only $C^0$. We also prove Lemma \ref{lemma_perimeter_bdry_mass}, giving the equality of perimeter and boundary mass.

We first recall some basic facts regarding the variation of a function. These concepts are typically stated in the setting of Euclidean space (see \cite{Amb} for instance), but generally have analogs to smooth Riemannian manifolds (see \cite{Mir} for instance, which we follow below).

Let $(M,g)$ be a smooth Riemannian manifold (possibly with boundary) of dimension $m$, and let $f \in L^1(M,\mu_g)$ be a Borel function (where $\mu_g$ is the Riemannian volume measure induced by $g$). The \emph{variation of $f$} is the quantity
\begin{equation}
|Df|_g(M) =  \sup_{\phi} \left\{\int_M f\Div_g\! \phi\; d\mu_g \;\Big|\; \phi \in \Gamma^1_c(TM), |\phi|_g \leq 1  \right\} \in [0,\infty], \label{eqn_Df_M}
\end{equation}
where  $\Gamma^1_c(TM)$ denotes the space of $C^1$ vector fields with compact support on $M$. For example, if $f$ happens to be $C^1$, then $|Df|_g(M) = \int_M |\nabla f|_g d\mu_g$.
We say $f$ has \emph{bounded variation} (with respect to $g$) if $|Df|_g(M)$ is finite. In this case, there exists a finite Radon measure on $M$, denoted $|Df|_g$, and  a $|Df|_g$-measurable vector field $\sigma_f$ on $M$, with $|\sigma_f|_g = 1$ a.e. (with respect to $|Df|_g$), so that
\begin{equation}
\label{eqn_deriv}
\int_M  f\Div_g\! \phi\; d\mu_g = -\int_M g(\sigma_f, \phi) d|Df|_g
\end{equation}
for all $\phi \in  \Gamma^1_c(TM)$. Formula \eqref{eqn_deriv} can be viewed as defining the distributional gradient of a function $f$ of bounded variation. Note that for any open set $U\subseteq M$,
\begin{equation}
|Df|_g(U) =  \sup_{\phi} \left\{\int_U f\Div_g\! \phi\; d\mu_g \;\Big|\; \phi \in \Gamma^1_c(TU), |\phi|_g \leq 1  \right\}, \label{eqn_Df_U}
\end{equation}
consistent with the notation in \eqref{eqn_Df_M}.

If $f$ has bounded variation, it admits smooth approximations in the following sense (see \cite[Proposition 1.4]{Mir}; cf. \cite[Theorem 3.9]{Amb} in the Euclidean case): there exists a sequence $f_i$ of smooth functions on $M$ of compact support, converging to $f$ in $L^1(M,\mu_g)$, such that $|Df_i|_g(M) =\int_M |\nabla f_i|_g d\mu_g\to |Df|_g(M)$ as $i \to \infty$.

We are  interested in the following special case: let $E \subseteq M$ be a Borel set of finite $\mu_g$-measure, i.e. $\chi_E \in L^1(M,\mu_g)$. We say $E$ has \emph{finite perimeter} in $M$ with respect to $g$ if $\chi_E$ has bounded variation with respect to $g$. The \emph{perimeter} of $E$ is then defined to be $|D\chi_E|_g(M)$, which we will also denote in this appendix by $P_g(E)$. From the above approximation result, it can be shown that $E$ can be approximated in volume and perimeter by smooth sets (cf. \cite[Theorem 3.42]{Amb}):

\begin{lemma}
\label{lemma_approx_perimeter}
Suppose $(M,g)$ is a smooth Riemannian manifold. 
Given a set $E \subseteq M$ of finite perimeter, there exists a sequence $E_i$ of open sets with smooth boundary in $M$, such that $\mu_g(E_i \triangle E) \to 0$ and $P_g(E_i) \to P_g(E)$ as $i \to \infty$. If $E$ is precompact, the $E_i$ may be chosen to be precompact.
\end{lemma}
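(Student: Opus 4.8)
The plan is to combine the smooth approximation result recalled just above (from \cite{Mir}) with the coarea formula. Applying that result to $f=\chi_E$ yields a sequence $f_i\in C^\infty_c(M)$ with $\epsilon_i:=\|f_i-\chi_E\|_{L^1(M,\mu_g)}\to 0$ and $\int_M|\nabla f_i|_g\,d\mu_g\to P_g(E)$. The approximating sets will be chosen among the superlevel sets $\{f_i>t\}$, $t\in(0,1)$; note that any such set is contained in $\spt f_i$ and is therefore automatically precompact, which takes care of the last assertion of the lemma.

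Before selecting the levels I would record two estimates. First, a Fubini computation gives the volume bound $\int_0^1\mu_g(\{f_i>t\}\,\triangle\,E)\,dt\le 2\epsilon_i$: for $x\notin E$ one has $\int_0^1\chi_{\{f_i>t\}}(x)\,dt\le|f_i(x)|$, and for $x\in E$ one has $\int_0^1\chi_{\{f_i\le t\}}(x)\,dt\le|1-f_i(x)|$, and both integrate to at most $\epsilon_i$. Second, the coarea formula for the smooth function $f_i$ gives $\beta_i:=\int_0^1 P_g(\{f_i>t\})\,dt=\int_{\{0<f_i<1\}}|\nabla f_i|_g\,d\mu_g\le\int_M|\nabla f_i|_g\,d\mu_g$, so that $\limsup_i\beta_i\le P_g(E)$; here I also use Sard's theorem, by which almost every $t\in(0,1)$ is a regular value of $f_i$, so that $\{f_i>t\}$ is an open set with smooth (compact) boundary $\{f_i=t\}$ and $P_g(\{f_i>t\})=\mathcal{H}^{m-1}(\{f_i=t\})$.

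To select the levels, fix $i$ large enough that $\sqrt{2\epsilon_i}<1$. By Chebyshev's inequality applied to the first estimate, the set of $t\in(0,1)$ with $\mu_g(\{f_i>t\}\,\triangle\,E)>\sqrt{2\epsilon_i}$ has measure at most $\sqrt{2\epsilon_i}$; removing it, together with the null set of non-regular values of $f_i$, leaves a set $G_i\subseteq(0,1)$ with $|G_i|\ge 1-\sqrt{2\epsilon_i}$. Since $\int_{G_i}P_g(\{f_i>t\})\,dt\le\beta_i$, I may pick $t_i\in G_i$ with $P_g(\{f_i>t_i\})\le\beta_i/(1-\sqrt{2\epsilon_i})$, and set $E_i:=\{f_i>t_i\}$. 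Then $E_i$ is an open, precompact set with smooth boundary, $\mu_g(E_i\,\triangle\,E)\le\sqrt{2\epsilon_i}\to 0$, and $\limsup_i P_g(E_i)\le\limsup_i\beta_i/(1-\sqrt{2\epsilon_i})\le P_g(E)$. For the reverse inequality, since $\chi_{E_i}\to\chi_E$ in $L^1(M,\mu_g)$, the lower semicontinuity of the variation under $L^1$ convergence---immediate from the definition \eqref{eqn_Df_M} by testing against a fixed $\phi\in\Gamma^1_c(TM)$ with $|\phi|_g\le1$---gives $\liminf_i P_g(E_i)\ge P_g(E)$. Hence $P_g(E_i)\to P_g(E)$, as required.

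The only point needing care is that volume and perimeter must be controlled at the \emph{same} level $t_i$. The volume condition has to be imposed first, because it is precisely what forces $\chi_{E_i}\to\chi_E$ in $L^1$, which in turn is what the lower-semicontinuity half of $P_g(E_i)\to P_g(E)$ requires; the upper half is then just the averaging bound coming from the coarea formula. Beyond this bookkeeping I do not anticipate any real obstacle, since the substance is carried by the cited smooth-approximation theorem and the coarea formula.
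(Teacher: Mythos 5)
Your proof is correct and takes essentially the route the paper intends: the paper states this lemma without proof, pointing to the smooth approximation result of \cite[Proposition 1.4]{Mir} and the Euclidean analogue \cite[Theorem 3.42]{Amb}, whose standard argument is exactly your combination of the coarea formula, Sard's theorem, a Chebyshev-type selection of a single good level $t_i$, and lower semicontinuity of the variation under $L^1$ convergence. I see no gaps, and your observation that the compactly supported approximants $f_i$ make every superlevel set precompact correctly handles the last assertion.
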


If the Riemannian metric $g$ is only $C^0$, however, the above discussion no longer holds, because the divergence in \eqref{eqn_Df_M} need not be well-defined. To work around this, we first
show how the data $\sigma_f$ and $|Df|_g$ in \eqref{eqn_deriv} are related with respect to different smooth metrics $g$ on $M$. 
\begin{lemma}
\label{lemma_g1_g2}
Let $f$ be a Borel function on a smooth manifold $M$, and let $g_1$ and $g_2$ be smooth Riemannian metrics on $M$. Then:
\begin{enumerate}
\item[(a)] If $f$ has compact support, then $f$ has bounded variation with respect to $g_1$ if and only if $f$ has bounded variation with respect to $g_2$. 
\item[(b)] If $f$ has bounded variation with respect to both $g_1$ and $g_2$, then $|Df|_{g_1}$ and $|Df|_{g_2}$ are mutually absolutely continuous as Borel measures, and in this case,
\item[(c)] the 1-forms $g_i(\sigma_f^i, \cdot)$ (for $i=1,2$) defined in \eqref{eqn_deriv} with respect to $g_1$ and $g_2$ are pointwise multiples of each other in $T^*M$ almost everywhere. (By (b), ``almost-everywhere'' can be taken with respect to $|Df|_{g_1}$ or $|Df|_{g_2}$.)
\end{enumerate}
\end{lemma}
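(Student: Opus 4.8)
The plan rests on two elementary facts. First, writing $d\mu_{g_2} = \psi\, d\mu_{g_1}$ for the smooth positive function $\psi$ on $M$ given by the pointwise ratio of volume densities, one has the pointwise identity $\Div_{g_2}(\phi)\, d\mu_{g_2} = \Div_{g_1}(\psi\phi)\, d\mu_{g_1}$ for every $C^1$ vector field $\phi$; this follows at once from the coordinate expression $\Div_g X = (\det g)^{-1/2}\partial_i\big((\det g)^{1/2}X^i\big)$, or from $\Div_g(X)\, d\mu_g = \mathcal{L}_X(d\mu_g)$ together with $\mathcal{L}_X(\psi\, d\mu_{g_1}) = \big(X\psi + \psi\Div_{g_1}X\big)\, d\mu_{g_1} = \Div_{g_1}(\psi X)\, d\mu_{g_1}$. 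Consequently, for every $f \in L^1$ and every $\phi \in \Gamma^1_c(TM)$,
\[
\int_M f\,\Div_{g_2}(\phi)\, d\mu_{g_2} \;=\; \int_M f\,\Div_{g_1}(\psi\phi)\, d\mu_{g_1}.
\]
Second, if $f$ is supported in a compact set $S$ and $S \subset \interior S'$ with $S'$ compact, then $|Df|_g(M) = |Df|_g(\interior S')$ for any smooth metric $g$: choosing $\eta \in C^\infty_c(\interior S')$ with $\eta \equiv 1$ near $S$, one has $f\,\Div_g(\eta\phi) = f\,\Div_g(\phi)$ pointwise (since $d\eta$ vanishes near $\spt f$ while $f$ vanishes off $S$), so the supremum in \eqref{eqn_Df_M} is unchanged when restricted to $\phi$ supported in $\interior S'$, cf.\ \eqref{eqn_Df_U}.

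For (a), assume $f$ is of bounded variation with respect to $g_1$ and supported in a compact $S \subset \interior S'$, $S'$ compact. On $S'$ the metrics $g_1, g_2$ are uniformly equivalent and $\psi$ is bounded, so there is a constant $C$ with $|\psi\phi|_{g_1} \le C$ on $S'$ whenever $|\phi|_{g_2} \le 1$. For such a $\phi$ supported in $\interior S'$, the field $C^{-1}\psi\phi$ is admissible for $|Df|_{g_1}(M)$, so the divergence identity above gives $\int_M f\,\Div_{g_2}(\phi)\, d\mu_{g_2} \le C\,|Df|_{g_1}(M)$; taking the supremum over such $\phi$ and using the localization fact for $g_2$ yields $|Df|_{g_2}(M) = |Df|_{g_2}(\interior S') \le C\,|Df|_{g_1}(M) < \infty$, so $f$ is of bounded variation with respect to $g_2$. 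The converse is identical with the roles swapped. For (b), no support hypothesis is needed and absolute continuity is local, so I would cover $M$ by countably many precompact open sets $V$; on each $\bar V$, arguing exactly as above but with \eqref{eqn_Df_U} in place of \eqref{eqn_Df_M}, one gets a constant $C_V$ with $|Df|_{g_2}(U) \le C_V\,|Df|_{g_1}(U)$ for every open $U \subseteq V$. If $A \subseteq V$ is Borel with $|Df|_{g_1}(A) = 0$, outer regularity of the Radon measure $|Df|_{g_1}$ provides open sets $U$ with $A \subseteq U \subseteq V$ and $|Df|_{g_1}(U)$ arbitrarily small, so $|Df|_{g_2}(A) \le C_V\,|Df|_{g_1}(U) \to 0$; summing over the cover and reversing the roles of $g_1$ and $g_2$ gives mutual absolute continuity.

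For (c), I would apply \eqref{eqn_deriv} for $g_2$, then the divergence identity, then \eqref{eqn_deriv} for $g_1$ applied to the $C^1$ vector field $\psi\phi$: for every $\phi \in \Gamma^1_c(TM)$,
\begin{align*}
\int_M g_2(\sigma_f^2,\phi)\, d|Df|_{g_2}
&= -\int_M f\,\Div_{g_2}(\phi)\, d\mu_{g_2}
= -\int_M f\,\Div_{g_1}(\psi\phi)\, d\mu_{g_1}\\
&= \int_M \psi\, g_1(\sigma_f^1,\phi)\, d|Df|_{g_1}.
\end{align*}
Writing $|Df|_{g_2} = h\,|Df|_{g_1}$ with $h > 0$ a.e.\ (Radon--Nikodym, using (b)), and setting $\alpha_i = g_i(\sigma_f^i,\cdot)$, this says $\int_M \big(\psi\,\alpha_1 - h\,\alpha_2\big)(\phi)\, d|Df|_{g_1} = 0$ for all $\phi \in \Gamma^1_c(TM)$. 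Testing against $\phi = \chi\, e_k$ with $\chi \in C^\infty_c$ ranging over a coordinate chart and $\{e_k\}$ a smooth local frame there — the integrands being integrable since $\psi\,\alpha_1(e_k)$ is bounded on the chart and $h\,\alpha_2(e_k) \in L^1(|Df|_{g_1})$ — the fundamental lemma of the calculus of variations for the Radon measure $|Df|_{g_1}$ gives $\psi\,\alpha_1(e_k) = h\,\alpha_2(e_k)$ a.e.\ for each $k$, hence $\psi\,\alpha_1 = h\,\alpha_2$ as $|Df|_{g_1}$-a.e.\ defined $1$-forms. Since $\psi > 0$ and $h > 0$ a.e., $\alpha_1$ and $\alpha_2$ are pointwise scalar multiples of each other $|Df|_{g_1}$-a.e.\ (equivalently $|Df|_{g_2}$-a.e.\ by (b)).

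The only genuine obstacle here is bookkeeping rather than a deep difficulty: two smooth metrics on a noncompact manifold can be arbitrarily inequivalent near infinity, so no comparison of the two variations or of their densities can be made globally. Every estimate must be carried out on a precompact set, and one must check at each step that this localization is compatible with the class of test vector fields defining the variation — this is exactly the purpose of the cutoff $\eta$ and of replacing $|Df|_g(M)$ by $|Df|_g(\interior S')$ in (a), and of working with open subsets of precompact sets in (b). In (c) the analogous care is the passage from the integrated identity to the pointwise one, where the measurability of $\sigma_f^i$ with respect to either variation measure — guaranteed by (b) — is precisely what legitimizes the frame-by-frame argument.
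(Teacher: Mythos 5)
Your proposal is correct and follows essentially the same route as the paper: the identity $\Div_{g_2}(\phi)\,d\mu_{g_2}=\Div_{g_1}(\psi\phi)\,d\mu_{g_1}$, local comparison constants on precompact sets together with outer regularity of the Radon measures for (b), and the Radon--Nikodym derivative plus the integrated identity tested against compactly supported fields for (c). Your cutoff/localization argument in (a) and the frame-by-frame density argument in (c) simply supply details the paper leaves implicit.
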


\begin{proof}
Let $W>0$ be the smooth function on $M$ defined by
$$d\mu_{g_2} = W d\mu_{g_1}.$$
From the characterization of divergence as the Lie derivative of the volume form, we have
\begin{equation}
\label{eqn_divergence}
\Div_{g_2} (\phi) = W^{-1} \Div_{g_1}(W\phi)
\end{equation}
for any $C^1$ vector field $\phi$ on $M$. Statement (a) follows from this and the definition of variation, using the fact that $g_1$ and $g_2$ have relative $C^1$ bounds on any compact set. 

Now, assume that $f$ has bounded variation with respect to $g_1$ and $g_2$.
For $i=1,2$, let
$$\alpha_i(\cdot) = g_i(\sigma^i_f,\cdot),$$
which are $|Df|_{g_i}$-measurable 1-forms on $M$, of unit length with respect to $g_i$, $|Df|_{g_i}$-almost everywhere. Using \eqref{eqn_deriv} and \eqref{eqn_divergence}, we have
\begin{align}
-\int_M \alpha_2(\phi) d|Df|_{g_2} &= \int_M f \Div_{g_2}\!(\phi) d\mu_{g_2} \nonumber\\
&= \int_M f \Div_{g_1}\!(W\phi) d\mu_{g_1} \label{eqn_W_phi}\\
&=-\int_M \alpha_1(\phi) Wd|Df|_{g_1} \label{eqn_alpha}
\end{align}
for any $C^1$ vector field $\phi$ on $M$ of compact support. We now prove (b) directly; clearly we need only show one direction. Suppose $A \subset M$ is a Borel set with $|Df|_{g_1}(A)=0$. Suppose first that $A$ is compact. Since $|Df|_{g_1}$ is a Radon measure and is hence outer-regular, given any $\epsilon>0$, there exists a precompact open set $U_\epsilon \subset M$ containing $A$ such that
$|Df|_{g_1}(U_\epsilon) < \epsilon.$ Let $C>0$ be a constant chosen so that $W|\cdot|_{g_1} \leq C |\cdot|_{g_2}$ on tangent vectors based in $\overline{U_\epsilon}$. Then using \eqref{eqn_Df_U} and \eqref{eqn_W_phi}, there exists a $C^1$ vector field $\phi$ supported in $U_\epsilon$ such that $|\phi|_{g_2}\leq 1$ and
\begin{align*}
|Df|_{g_2}(U_\epsilon) &< \epsilon + \int_M f\Div_{g_2}\! \phi\; d\mu_{g_2}\\
&= \epsilon + C\int_M f\Div_{g_1}\! \left(\frac{W\phi}{C}\right)\; d\mu_{g_1}\\
&\leq   \epsilon + C |Df|_{g_1}(U_\epsilon)\\
&\leq \epsilon (1+C).
\end{align*}
Since $\epsilon$ was arbitrary and $C$ can be chosen independently of $\epsilon$ as $\epsilon \to 0$, this shows $|Df|_{g_2}(A)=0$. If $A$ is not compact, this argument together with a simple covering argument suffices to show $|Df|_{g_2}(A)=0$. 
This completes the proof of (b).

From (b), by the Radon--Nikodym theorem, $d|Df|_{g_1} = h d|Df|_{g_2}$ as  measures, for a positive Borel function $h$ on $M$. Combining this with \eqref{eqn_alpha}, we have
\begin{equation}
\label{eqn_measures}
\int_M\left(\alpha_2(\phi)-\alpha_1(\phi) Wh\right) d|Df|_{g_2}  =0
\end{equation}
for any $C^1$ vector field $\phi$ of compact support. This implies that $\alpha_1$ and $\alpha_2$ are pointwise multiples of each other a.e. (with respect to $|Df|_{g_1}$ or $|Df|_{g_2}$).
\end{proof}

The previous lemma allows us to compare the measures $|Df|_g$ with respect to different smooth metrics $g$ that are related by a $C^0$ bound:
\begin{lemma}
\label{lemma_uniform_equiv}
Suppose  $g_1$ and $g_2$ are smooth Riemannian metrics on $M$ of dimension $m$, satisfying
\begin{equation}
\label{eqn_uniform_equiv}
\Lambda^{-1} |\cdot|_{g_1} \leq |\cdot|_{g_2} \leq \Lambda |\cdot|_{g_1}
\end{equation}
on tangent vectors, for some constant $\Lambda\geq 1$. Then
$$\Lambda^{-m-1} |Df|_{g_1} \leq |Df|_{g_2} \leq \Lambda^{m+1}|Df|_{g_1}$$
as Borel measures for any function $f$ on $M$ of bounded variation with respect to both $g_1$ and $g_2$.
\end{lemma}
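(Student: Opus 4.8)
The plan is to compare the two variations set-by-set on open subsets of $M$, using the change-of-variables relation between the divergence operators of $g_1$ and $g_2$ already recorded in \eqref{eqn_divergence}, and then to promote the resulting inequality from open sets to all Borel sets by outer regularity.

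First I would record pointwise bounds on the volume density $W>0$ defined (as in the proof of Lemma \ref{lemma_g1_g2}) by $d\mu_{g_2} = W\, d\mu_{g_1}$. Working in a local chart, the eigenvalues of $g_2$ relative to $g_1$ lie in $[\Lambda^{-2},\Lambda^2]$ by \eqref{eqn_uniform_equiv}, so $\det(g_2)/\det(g_1) \in [\Lambda^{-2m}, \Lambda^{2m}]$ and hence $\Lambda^{-m} \le W \le \Lambda^m$ everywhere on $M$ (a chart-independent statement). Next, fix an open set $U \subseteq M$ and a test field $\phi \in \Gamma^1_c(TU)$ with $|\phi|_{g_2} \le 1$. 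From \eqref{eqn_uniform_equiv} we get $|\phi|_{g_1} \le \Lambda$, so $|W\phi|_{g_1} = W|\phi|_{g_1} \le \Lambda^{m+1}$ pointwise; thus $\psi := \Lambda^{-(m+1)}W\phi \in \Gamma^1_c(TU)$ with $|\psi|_{g_1} \le 1$. Using $d\mu_{g_2} = W\, d\mu_{g_1}$ together with $\Div_{g_2}\phi = W^{-1}\Div_{g_1}(W\phi)$ from \eqref{eqn_divergence},
$$\int_U f\,\Div_{g_2}\!\phi\; d\mu_{g_2} = \int_U f\,\Div_{g_1}\!(W\phi)\; d\mu_{g_1} = \Lambda^{m+1}\int_U f\,\Div_{g_1}\!\psi\; d\mu_{g_1} \le \Lambda^{m+1}\,|Df|_{g_1}(U),$$
the last step by the definition \eqref{eqn_Df_U} of $|Df|_{g_1}(U)$ as a supremum over admissible test fields. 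Taking the supremum over all such $\phi$ yields $|Df|_{g_2}(U) \le \Lambda^{m+1}|Df|_{g_1}(U)$ for every open $U \subseteq M$.

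Finally, since $f$ has bounded variation with respect to both metrics, $|Df|_{g_1}$ and $|Df|_{g_2}$ are finite Radon measures on the manifold $M$, hence outer regular. An inequality between outer regular measures that holds on all open sets holds on all Borel sets: given Borel $A$, choose open $U_n \supseteq A$ with $\Lambda^{m+1}|Df|_{g_1}(U_n) \to \Lambda^{m+1}|Df|_{g_1}(A)$, and then $|Df|_{g_2}(A) \le |Df|_{g_2}(U_n) \le \Lambda^{m+1}|Df|_{g_1}(U_n) \to \Lambda^{m+1}|Df|_{g_1}(A)$. This gives $|Df|_{g_2} \le \Lambda^{m+1}|Df|_{g_1}$ as Borel measures. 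Since \eqref{eqn_uniform_equiv} is symmetric in $g_1$ and $g_2$ with the same constant $\Lambda$, interchanging their roles yields $|Df|_{g_1} \le \Lambda^{m+1}|Df|_{g_2}$, i.e. $\Lambda^{-m-1}|Df|_{g_1} \le |Df|_{g_2}$, as claimed. There is no genuine obstacle; the only point demanding care is bookkeeping the power of $\Lambda$, which accumulates as $\Lambda^m$ from the volume density $W$ and an extra $\Lambda$ from the pointwise norm comparison of vector fields, giving the exponent $m+1$.
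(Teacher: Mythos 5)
Your proof is correct, but it takes a genuinely different route from the paper's. You argue directly at the level of the variational definition \eqref{eqn_Df_U}: given a $g_2$-admissible test field $\phi$ on an open set $U$, you convert it to the $g_1$-admissible field $\Lambda^{-(m+1)}W\phi$ via the divergence identity \eqref{eqn_divergence} and the pointwise bounds $\Lambda^{-m}\le W\le\Lambda^m$, $|\phi|_{g_1}\le\Lambda$, obtaining $|Df|_{g_2}(U)\le\Lambda^{m+1}|Df|_{g_1}(U)$ on all open sets, and then pass to Borel sets by outer regularity (only outer regularity of $|Df|_{g_1}$ plus monotonicity of $|Df|_{g_2}$ is needed, and both measures are finite Radon measures by the BV hypothesis, so this is fine); symmetry in $g_1,g_2$ gives the reverse bound. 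The paper instead works with the machinery of Lemma \ref{lemma_g1_g2}: it uses the mutual absolute continuity from part (b), writes $d|Df|_{g_1}=h\,d|Df|_{g_2}$, and combines the a.e.\ proportionality of the unit-length polar $1$-forms $\alpha_1,\alpha_2$ (part (c), via \eqref{eqn_measures}) with the bounds on $W$ to get the pointwise estimate $\Lambda^{-m-1}\le h\le\Lambda^{m+1}$, from which the measure inequality follows. Your argument is more elementary and self-contained --- it bypasses the Radon--Nikodym density and parts (b) and (c) of Lemma \ref{lemma_g1_g2} entirely, needing only the smooth positive density $W$ and \eqref{eqn_divergence} --- while the paper's route recycles structure it has already set up and yields, as a byproduct, the slightly finer pointwise comparison of the densities rather than just the set-wise inequality. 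The only bookkeeping point in your version, the exponent $m+1$ accumulating as $\Lambda^m$ from $W$ and $\Lambda$ from the norm comparison of vector fields, is handled correctly.
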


\begin{proof}
Continuing with the notation in the proof of the previous lemma, consider the 1-forms $\alpha_1$, $\alpha_2$ that are multiples of each other pointwise a.e. and have unit length with respect to $g_1$ and $g_2$, respectively. From \eqref{eqn_uniform_equiv}, this implies
$$\Lambda^{-1} \alpha_1(\cdot) \leq \alpha_2(\cdot) \leq \Lambda \alpha_1(\cdot)$$
as 1-forms a.e. From \eqref{eqn_uniform_equiv} and  the definition of $W$, we have
$$\Lambda^{-m} \leq W \leq \Lambda^m.$$
From these bounds and \eqref{eqn_measures}, it follows that
$$\Lambda^{-m-1} \leq h \leq \Lambda^{m+1}$$
a.e., and from this, the claim follows.
\end{proof}

\begin{cor}
\label{cor_perimeter}
Suppose  $g_1$ and $g_2$ are smooth Riemannian metrics on $M$, satisfying \eqref{eqn_uniform_equiv}. Let $E \subset M$ be a precompact Borel set.
 Then
$$\Lambda^{-m-1} P_{g_1}(E) \leq P_{g_2}(E) \leq \Lambda^{m+1} P_{g_1}(E).$$
\end{cor}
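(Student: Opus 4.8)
\begin{proof}[Proof of Corollary \ref{cor_perimeter}]
The plan is to deduce this directly from Lemmas \ref{lemma_g1_g2} and \ref{lemma_uniform_equiv}. Since $E$ is precompact, the function $f = \chi_E$ has compact support, so by Lemma \ref{lemma_g1_g2}(a), $\chi_E$ has bounded variation with respect to $g_1$ if and only if it has bounded variation with respect to $g_2$; equivalently, $E$ has finite perimeter with respect to $g_1$ if and only if it has finite perimeter with respect to $g_2$.

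If $E$ does not have finite perimeter with respect to either metric, then $P_{g_1}(E) = P_{g_2}(E) = +\infty$ and the claimed chain of inequalities holds trivially. Otherwise $\chi_E$ has bounded variation with respect to both $g_1$ and $g_2$, and we may apply Lemma \ref{lemma_uniform_equiv} with $f = \chi_E$ to obtain
$$\Lambda^{-m-1}\, |D\chi_E|_{g_1} \leq |D\chi_E|_{g_2} \leq \Lambda^{m+1}\, |D\chi_E|_{g_1}$$
as Borel measures on $M$. Evaluating each measure on the Borel set $M$ and recalling that $P_{g_i}(E) = |D\chi_E|_{g_i}(M)$ by definition, we conclude
$$\Lambda^{-m-1} P_{g_1}(E) \leq P_{g_2}(E) \leq \Lambda^{m+1} P_{g_1}(E),$$
as desired.
\end{proof}

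There is no substantive obstacle here; the only point requiring a moment's care is the dichotomy supplied by Lemma \ref{lemma_g1_g2}(a), which guarantees that finiteness of the perimeter is a metric-independent property under \eqref{eqn_uniform_equiv}, so that the inequalities make sense (and are trivially true) even in the degenerate case of infinite perimeter. All the analytic content is already contained in Lemma \ref{lemma_uniform_equiv}.
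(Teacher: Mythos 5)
Your proof is correct and follows essentially the same route as the paper: it uses Lemma \ref{lemma_g1_g2}(a) to dispatch the case of infinite perimeter and Lemma \ref{lemma_uniform_equiv} with $f=\chi_E$, evaluated on $M$, for the finite case. No issues.
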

\begin{proof}
Since $E$ is precompact, $|D\chi_E|_{g_1}(M)$ and $|D\chi_E|_{g_1}(M)$ are either both finite or both infinite, by Lemma \ref{lemma_g1_g2}(a). In the former case, the result follows from the previous Lemma with $f=\chi_E$, and in the latter it is trivial.
\end{proof}

At last we can define perimeter with respect to a $C^0$ Riemannian metric $g$ on $M$. Suppose $E \subset M$ is a precompact Borel set. We say $E$ has finite perimeter with respect to $g$ if $E$ has finite perimeter with respect to any smooth Riemannian metric on $M$ (and hence all such metrics, by Lemma \ref{lemma_g1_g2}(a)).  In this case, define
$$P_g(E) = \lim_{i \to \infty} P_{g_i}(E),$$
for any sequence of smooth Riemannian metrics $\{g_i\}$ on $M$, such that $g_i \to g$ in $C^0$. Corollary \ref{cor_perimeter} implies that $P_g(E)$ is well-defined, i.e., is independent of the sequence.

\begin{cor}
\label{cor_smoothing}
The smoothing result in Lemma \ref{lemma_approx_perimeter} holds if $g$ is merely $C^0$, provided the set $E$ is precompact. 
\end{cor}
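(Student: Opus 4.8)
The plan is to approximate the $C^0$ metric $g$ by smooth metrics, apply Lemma~\ref{lemma_approx_perimeter} to each smooth approximation, and extract a diagonal sequence, controlling the errors with Corollary~\ref{cor_perimeter} (for perimeters) and with the uniform convergence of the Riemannian volume densities on compact sets (for volumes).

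More precisely, I would fix a precompact open set $\Omega \subset M$ with $\overline{E} \subset \Omega$ and a sequence $\{g_i\}$ of smooth Riemannian metrics with $g_i \to g$ in $C^0$. Since $\overline{\Omega}$ is compact there are constants $\Lambda_i \to 1$ and $c_i \to 1$ with $\Lambda_i^{-1}|\cdot|_g \le |\cdot|_{g_i} \le \Lambda_i|\cdot|_g$ on tangent vectors based in $\overline{\Omega}$ and $c_i^{-1}\mu_g \le \mu_{g_i} \le c_i\mu_g$ on Borel subsets of $\Omega$. By the definition of finite $g$-perimeter, $E$ has finite $g_i$-perimeter for every $i$, and $P_{g_i}(E) \to P_g(E) < \infty$ by the definition of $P_g$. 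Applying Lemma~\ref{lemma_approx_perimeter} to $(M,g_i)$ and the precompact set $E$ --- and tracking the supports in the smoothing construction underlying that lemma so that the approximating sets all lie in the fixed neighborhood $\Omega$ --- I obtain, for each $i$, a precompact open set $F_i$ with smooth boundary, $\overline{F_i} \subset \Omega$, such that $\mu_{g_i}(F_i \triangle E) < 1/i$ and $|P_{g_i}(F_i) - P_{g_i}(E)| < 1/i$. I claim $\{F_i\}$ is the desired sequence.

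The volume statement follows at once: $\mu_g(F_i \triangle E) \le c_i\,\mu_{g_i}(F_i \triangle E) < c_i/i \to 0$. For the perimeters I fix $i$ and apply Corollary~\ref{cor_perimeter} to the smooth pair $(g_i, g_j)$ on the precompact set $F_i$: since $|D\chi_{F_i}|$ is supported in $\overline{F_i} \subset \Omega$, a comparison constant $\Lambda_{ij}$ valid on $\overline{\Omega}$ suffices, and $\Lambda_{ij}^{-m-1}P_{g_i}(F_i) \le P_{g_j}(F_i) \le \Lambda_{ij}^{m+1}P_{g_i}(F_i)$ with $\Lambda_{ij} \to \Lambda_i$ as $j \to \infty$. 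Letting $j \to \infty$ and using $P_{g_j}(F_i) \to P_g(F_i)$ (by the definition of $P_g$) gives $|P_g(F_i) - P_{g_i}(F_i)| \le (\Lambda_i^{m+1} - 1)\,P_{g_i}(F_i)$; since $P_{g_i}(F_i) \le P_{g_i}(E) + 1/i$ stays bounded and $\Lambda_i \to 1$, this error tends to $0$, and combining with $|P_{g_i}(F_i) - P_{g_i}(E)| < 1/i$ and $P_{g_i}(E) \to P_g(E)$ yields $P_g(F_i) \to P_g(E)$. The step I expect to require the most care is the parenthetical one above --- verifying that Lemma~\ref{lemma_approx_perimeter} can be arranged so that every $F_i$ sits inside a single fixed precompact neighborhood of $\overline{E}$; this uniformity is exactly what makes the comparison constants $\Lambda_i$ and $c_i$ tend to $1$ (recall $g_i \to g$ only locally uniformly), and it should follow by keeping the supports of the mollified indicator functions close to $\overline{E}$ in the proof of that lemma.
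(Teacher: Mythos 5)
Your proposal is correct and is essentially the paper's intended argument: the paper's proof is a one-line appeal to Corollary \ref{cor_perimeter}, and your diagonal argument --- applying Lemma \ref{lemma_approx_perimeter} for each smooth $g_i\to g$ and transferring volumes and perimeters back to $g$ via comparison constants $\Lambda_i, c_i \to 1$ on a fixed precompact $\Omega\supset\overline{E}$, using that $P_{g_i}(E)\to P_g(E)$ and $P_{g_j}(F_i)\to P_g(F_i)$ by the definition of $P_g$ --- is exactly the filling-in of that appeal. For the one step you flag (arranging $\overline{F_i}\subset\Omega$), you need not reopen the proof of Lemma \ref{lemma_approx_perimeter}: apply it with ambient manifold $\Omega$ itself, observing that for $E$ (and for any set whose closure lies in $\Omega$) the variation measure is supported in the closure, so perimeter relative to $\Omega$ agrees with perimeter in $M$, and precompactness in $\Omega$ then gives $\overline{F_i}\subset\Omega$ automatically; the same localization also justifies your use of a comparison constant valid only on $\overline{\Omega}$ in Corollary \ref{cor_perimeter}.
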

This follows from Corollary \ref{cor_perimeter} as well.

In the main body of the paper, we use the notation $|\partial^*E|_g$ to denote $P_g(E)$, though we do not require the notion of the reduced boundary $\partial^* E$ itself and so do not discuss it here.

\medskip

We conclude with the proof of Lemma \ref{lemma_perimeter_bdry_mass}, showing the equivalence between perimeter and boundary mass. We restate this here for the reader's convenience:
\begin{lemma}
[Restatement of Lemma \ref{lemma_perimeter_bdry_mass}]
Let $(M,g)$ be a connected, oriented $C^0$ Riemannian manifold of dimension $m$, possibly with boundary. Suppose $d$ is a complete metric on $M$, locally compatible with $g$, and let $E \subseteq M$ be a precompact Borel set. Let $T_E$ be the integer rectifiable $m$-current on $(M,d)$ given by integration over $E$. Then $\M(\partial T_E)$ is finite if and only if $E$ has finite perimeter with respect to $g$, and in this case, 
$$|\partial^* E|_g = \M(\partial T_E).$$
\end{lemma}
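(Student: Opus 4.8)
The plan is to reduce the statement to its Euclidean prototype, \cite[Theorem 3.7]{AK}, in two stages: first for a smooth metric $g$, then for a $C^0$ metric by approximation. Throughout, recall that $T_E$ and hence $\partial T_E$ are metric functionals depending only on the smooth structure of $M$ and the (metric-independent) class of Lipschitz functions; only the mass \emph{measures} depend on the distance. As a preliminary \emph{reduction to $d = d_g$}: since $g$ and $d$ are locally compatible, $d = d_g$ on a neighborhood of each point, and $\|\partial T_E\|$ is a Borel measure determined by its values on small balls, on which the two distances — and hence the mass computations — agree; so $\M(\partial T_E)$ is the same whether computed with $d$ or with $d_g$, and we may assume $d = d_g$. (This is the localization already exploited in the proof of Lemma \ref{lemma_mass_measure}.)

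\emph{Smooth case.} Assume $g$ is smooth. Using a partition of unity, every compactly supported $C^1$ differential $(m-1)$-form $\omega$ on $M$ is a finite sum of terms $f\, d\pi_1 \wedge \cdots \wedge d\pi_{m-1}$ with $f$ bounded Lipschitz and the $\pi_i$ Lipschitz, so $\partial T_E$ extends to act on all such $\omega$, and unwinding the definitions of $\partial$ and of $T_E$ gives $\partial T_E(\omega) = \int_E d\omega$. For $\phi \in \Gamma^1_c(TM)$ with $|\phi|_g \leq 1$, the contraction $\iota_\phi \mu_g$ is a compactly supported $C^1$ $(m-1)$-form of pointwise comass $|\phi|_g \leq 1$, and Cartan's formula gives $\int_E \Div_g\!\phi\; d\mu_g = \int_E d(\iota_\phi \mu_g) = \partial T_E(\iota_\phi \mu_g)$; conversely, since in dimension $m$ every $(m-1)$-covector is $\iota_\phi \mu_g$ for a unique $\phi$, with comass $|\phi|_g$, every comass-$\leq 1$ form arises this way. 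Hence the variation defining $P_g(E)$ is exactly the supremum of $\partial T_E$ over compactly supported $C^1$ $(m-1)$-forms of comass at most one. It remains to recognize this supremum as $\M(\partial T_E)$ — i.e.\ that the Ambrosio--Kirchheim mass, defined via tuples with $\Lip(\pi_i)\le 1$, coincides with the comass-dual supremum; in codimension one this is elementary (every $(m-1)$-covector is decomposable), and localizing in bi-Lipschitz coordinate charts it is precisely \cite[Theorem 3.7]{AK}. In particular the finiteness assertions match, and then $|\partial^* E|_g = P_g(E) = \M(\partial T_E)$. (Alternatively one can argue on the rectifiable side: if $\M(\partial T_E)<\infty$, \cite[Theorem 8.6]{AK} makes $\partial T_E$ integer rectifiable with mass measure $\theta\,\mathcal H^{m-1}_{d_g}\rst R$, a half-space local model pins $\theta\equiv 1$ and identifies $R$ with the reduced boundary, and the De Giorgi structure theorem on Riemannian manifolds \cite{Mir} gives $P_g(E)=\mathcal H^{m-1}_{d_g}(\partial^*E)$; the reverse finiteness follows by approximating $E$ in volume and perimeter by smooth sets, Lemma \ref{lemma_approx_perimeter}, together with lower semicontinuity of mass.)

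\emph{$C^0$ case.} Pick smooth metrics $g_i \to g$ in $C^0$; then $P_g(E) = \lim_i P_{g_i}(E)$ by definition, and $E$ has finite $g$-perimeter iff it has finite $g_i$-perimeter (Lemma \ref{lemma_g1_g2}(a), Corollary \ref{cor_perimeter}). On a precompact neighborhood of $\overline E$ — which carries all the mass of $T_E$ — the $g_i$ are uniformly equivalent to $g$ with constants $1+\epsilon_i \to 1$, so the classes of $1$-Lipschitz functions, and therefore the masses of $\partial T_E$ computed with respect to $d_{g_i}$ versus $d_g$, differ by at most a factor $(1+\epsilon_i)^{m-1}$ — exactly the squeeze used in Lemma \ref{lemma_mass_measure}. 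Thus the $d_{g_i}$-mass of $\partial T_E$ converges to its $d_g$-mass, and applying the smooth case to each $g_i$ yields $\M(\partial T_E) = \lim_i P_{g_i}(E) = P_g(E)$, with both sides finite or infinite together.

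\emph{Main obstacle.} The real work is the smooth case, and within it the identification of the Ambrosio--Kirchheim mass of the metric current $\partial T_E$ (defined through $1$-Lipschitz test tuples) with the comass-dual supremum appearing in the variation $P_g(E)$ — equivalently, making sure no mass is lost because coordinate charts are bi-Lipschitz but not isometric. This is the single place where the Euclidean \cite[Theorem 3.7]{AK} (or, on the rectifiable side, the pinning of the multiplicity to $1$) must genuinely be invoked; the reduction to $d=d_g$ and the $C^0$ approximation are then routine perturbation arguments following the template of Lemma \ref{lemma_mass_measure}.
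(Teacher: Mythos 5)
Your overall architecture (reduce to $d=d_g$; prove the smooth case by duality between the divergence formulation of perimeter and the action of $\partial T_E$ on comass-one forms; then handle $C^0$ metrics by smooth approximation) is plausible, but the proof has a genuine gap exactly at the point you yourself flag as the ``main obstacle'': the identification of the Ambrosio--Kirchheim mass of $\partial T_E$ (a supremum over tuples of $1$-Lipschitz test functions) with the comass-dual supremum, equivalently the transfer of \cite[Theorem 3.7]{AK} from $\R^m$ to the manifold. Saying ``localizing in bi-Lipschitz coordinate charts it is precisely \cite[Theorem 3.7]{AK}'' does not close this: a bi-Lipschitz (non-isometric) chart distorts both the variation measure and the mass measure by multiplicative constants, so what you get from that remark is only two-sided comparability of $P_g(E)$ and $\M(\partial T_E)$ up to chart-dependent factors, not equality. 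The missing content is the quantitative version of the localization: at each point choose coordinates adapted to $g(p)$ so that $(1+\epsilon)^{-2}\delta_{ij}\le g_{ij}\le(1+\epsilon)^{2}\delta_{ij}$ and the three distances $d$, $d_g$, $d_\circ$ (Euclidean) are $(1+\epsilon)$-comparable; apply the Euclidean theorem to $E\cap U$ there; convert the variation side between metrics using the comparison of variation measures under uniformly equivalent metrics (Lemma \ref{lemma_uniform_equiv}, giving a factor $(1+\epsilon)^{m+1}$) and the mass side using the fact that the mass measure scales only through Lipschitz constants (a factor $(1+\epsilon)^{m-1}$); then cover $M$, use that both sides are Borel measures, and let $\epsilon\to 0$. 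This $\epsilon$-argument is the entire substance of the paper's proof of Lemma \ref{lemma_perimeter_bdry_mass}; without it (or an equivalent argument, e.g.\ a genuinely carried-out rectifiable-slice computation pinning the density to $1$), the crucial equality in your smooth case is asserted rather than proved. Note also that ``in codimension one the mass/comass identification is elementary'' is precisely what \cite[Theorem 3.7]{AK} encodes; it is not available for free on a Riemannian manifold.

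Two further points, secondary but worth noting. First, the paper never splits into a smooth case and a $C^0$ case: the $(1+\epsilon)$-chart comparison treats the $C^0$ metric directly, so your two-stage structure is an extra layer rather than an error; but your $C^0$ stage has its own wrinkle, namely that $C^0$-closeness of $g_i$ to $g$ on a neighborhood of $\overline E$ does not by itself give $(1+\epsilon_i)$-comparability of the distance functions $d_{g_i}$ and $d_g$ (minimizing paths may leave the neighborhood), so the claimed comparison of $1$-Lipschitz classes, and hence of the masses of $\partial T_E$, needs a localization-plus-McShane-re-extension argument using $\spt(\partial T_E)\subseteq\overline E$ and the locality axiom. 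Second, your opening reduction to $d=d_g$ likewise rests on an unproved localization statement for mass measures (that they can be computed from tests supported in small compatibility neighborhoods, after re-extension); the paper avoids needing any such global statement by simply recording $d=d_g$ on each chart $U$ and working locally throughout.
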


\begin{proof}[Proof of Lemma \ref{lemma_perimeter_bdry_mass}]
This proof uses \cite[Theorem 3.7]{AK}, which implies the analogous result on Euclidean space.

If $M$ has a boundary, we may embed $M$ into a smooth manifold $\tilde M$ without boundary (by attaching an open collar neighborhood, for example), and extend $d$ and $g$ accordingly to $\tilde M$. The mass measure $\|\partial T_E\|$ and variation measure $|D \chi_E|_g$ are supported in $M$ and are independent of the extension to $\tilde M$. Thus, it suffices to assume $M$ has no boundary.

Let $\epsilon > 0$, and let $p \in M$.  Using a $g$-orthogonal basis of $T_pM$ along with the continuity of $g$, we can find a coordinate system $(x^i)$ about $p$ on a small precompact neighborhood $U \subset M$ of $p$ such that on $U$:
\begin{align}
d&=d_g \nonumber\\
(1+\epsilon)^{-2} \delta_{ij} &\leq g_{ij} \leq (1+\epsilon)^2 \delta_{ij} \label{eqn_g_d}\\
(1+\epsilon)^{-1} d_\circ(\cdot, \cdot)& \leq d(\cdot, \cdot) \leq (1+\epsilon) d_\circ (\cdot, \cdot)\label{eqn_d_0}
\end{align}
on $U$, where the first condition is possible by local compatibility. We may shrink $U$ if necessary so that it is convex with respect to $\delta_{ij}$; then $d_\circ$, the metric on $U$ induced by the Riemannian metric $\delta_{ij}$, can be regarded as the restriction of the Euclidean metric to $U$.

From \cite[Theorem 3.7]{AK}, we have $|D \chi_{E \cap U}|_{\circ}  \leq \|\partial T_{E \cap U}\|_{\circ} $ as Borel measures on $U$, where the $\circ$ subscript means taken with respect to the Euclidean metric on $U$. Using \eqref{eqn_g_d} and Lemma \ref{lemma_uniform_equiv}, we have $|D \chi_{E \cap U}|_g \leq (1+\epsilon)^{m+1} \|\partial T_{E \cap U}\|_{\circ}$; using \eqref{eqn_d_0} and the fact that the mass measure's metric dependence comes solely from Lipschitz constants, we find further than
$$|D \chi_{E \cap U}|_g \leq (1+ \epsilon)^{2m}  \|\partial T_{E \cap U}\|$$
where the latter is taken with respect to $d$. Since $M$ may be covered by such open neighborhoods $U$, and $|D \chi_E|_g$ and $\|\partial T_E\|$ are Borel measures, we find
$$|D \chi_{E}|_g \leq (1+ \epsilon)^{2m}  \|\partial T_{E }\|$$
as Borel measures on $M$. Since $\epsilon$ was arbitrary, we have $|D \chi_{E}|_g \leq  \|\partial T_{E }\|$.

A similar argument, together with the reverse inequality $\|\partial T_{E \cap U}\|_{\circ} \leq |D \chi_{E \cap U}|_{\circ} $ in \cite[Theorem 3.7]{AK}, completes the proof. 
\end{proof}

\begin{bibdiv}
 \begin{biblist}

\bib{All}{article}{
	author={Allen, Brian},
	title={Inverse Mean Curvature Flow and the Stability of the Positive Mass Theorem},
	date={2018},
	eprint={https://arxiv.org/abs/1807.08822}
}

\bib{Amb}{book}{
   author={Ambrosio, Luigi},
   author={Fusco, Nicola},
   author={Pallara, Diego},
   title={Functions of bounded variation and free discontinuity problems},
   series={Oxford Mathematical Monographs},
   publisher={The Clarendon Press, Oxford University Press, New York},
   date={2000}
}

\bib{AK}{article}{
   author={Ambrosio, Luigi},
   author={Kirchheim, Bernd},
   title={Currents in metric spaces},
   journal={Acta Math.},
   volume={185},
   date={2000},
   number={1},
   pages={1--80}
}

\bib{AJ}{article}{
	author={Anderson, Michael T.},
	author={Jauregui, Jeffrey L.},
	title={Embeddings, immersions and the Bartnik quasi-local mass conjectures},
	journal={Ann. Henri Poincar\' e (to appear)},
	eprint={https://arxiv.org/abs/1611.08755}
}

\bib{ADM}{article}{
   author={Arnowitt, Richard},
   author={Deser, Stanley},
   author={Misner, Charles},
   title={Coordinate invariance and energy expressions in general relativity},
   journal={Phys. Rev. (2)},
   volume={122},
   date={1961},
   pages={997--1006},
}

\bib{Bar}{article}{
   author={Bartnik, Robert},
   title={The mass of an asymptotically flat manifold},
   journal={Comm. Pure Appl. Math.},
   volume={39},
   date={1986},
   number={5},
   pages={661--693},
}

\bib{Ba1}{article}{
	author={Bartnik, Robert},
	title={New definition of quasilocal mass},
	journal={Phys. Rev. Lett.},
	volume={62},
	date={1989},
	number={20},
	pages={2346--2348}
}

\bib{Bur}{article}{
   author={Burtscher, Annegret Y.},
   title={Length structures on manifolds with continuous Riemannian metrics},
   journal={New York J. Math.},
   volume={21},
   date={2015},
   pages={273--296}
}

\bib{Chodosh-Eichmair-Shi-Yu:2016}{article}{
   author={Chodosh, Otis},
   author = {Eichmair, Michael},
   author={Shi, Yuguang},
   author = {Yu, Haobin},
   title={Isoperimetry, scalar curvature, and mass in asymptotically flat Riemannian 3-manifolds},
   date={2016},
   eprint={https://arxiv.org/abs/1606.04626}
}

\bib{Chr}{article}{
   author={Chru\'sciel, Piotr},
   title={Boundary conditions at spatial infinity from a Hamiltonian point
   of view},
   conference={
      title={Topological properties and global structure of space-time},
      address={Erice},
      date={1985},
   },
   book={
      series={NATO Adv. Sci. Inst. Ser. B Phys.},
      volume={138},
      publisher={Plenum, New York},
   },
   date={1986},
   pages={49--59}
}

\bib{Fan-Shi-Tam:2009}{article}{
   author={Fan, Xu-Qian},
   author={Shi, Yuguang},
   author={Tam, Luen-Fai},
   title={Large-sphere and small-sphere limits of the Brown-York mass},
   journal={Comm. Anal. Geom.},
   volume={17},
   date={2009},
   number={1},
   pages={37--72},
}

\bib{FF}{article}{
	author={Federer, Herbert},
	author={Fleming, Wendell H.},
	title={Normal and integral currents},
	journal={Ann. of Math. (2)},
	volume={72},
	date={1960},
	pages={458--520}
}

\bib{HLS}{article}{
	author={Huang, Lan-Hsuan},
	author={Lee, Dan A.},
	author={Sormani, Christina},
	title={Intrinsic flat stability of the positive mass theorem for
		graphical hypersurfaces of Euclidean space},
	journal={J. Reine Angew. Math.},
	volume={727},
	date={2017},
	pages={269--299}
}

\bib{Huisken:2006}{article}{
     author={Huisken, Gerhard},
     title={An isoperimetric concept for mass and quasilocal mass},
     journal={Oberwolfach Reports, European Mathematical Society (EMS), Z\"urich},
   date={2006},
   volume={3},
   number={1},
   pages={87--88}
}

\bib{Huisken:Morse}{article}{
     author={Huisken, Gerhard},
     title={An isoperimetric concept for the mass in general relativity},
 	 eprint={https://video.ias.edu/node/234},
     date={March 2009},
     journal={Accessed, 2015-08-31}
}

\bib{HI}{article}{
	author={Huisken, Gerhard},
	author={Ilmanen, Tom},
	title={The inverse mean curvature flow and the Riemannian Penrose
		inequality},
	journal={J. Differential Geom.},
	volume={59},
	date={2001},
	number={3},
	pages={353--437}
}

\bib{Jau}{article}{
   author={Jauregui, Jeffrey L.},
   title={On the lower semicontinuity of the ADM mass},
   journal={Comm. Anal. Geom.},
   volume={26},
   date={2018},
   number={1},
   pages={85--111}
}

\bib{Jau2}{article}{
  author={Jauregui, Jeffrey L.},
  title={Lower semicontinuity of the ADM mass in dimensions two through seven},
  journal={Pacific J. Math (to appear)}
}

\bib{Jau3}{article}{
	author={Jauregui, Jeffrey L.},
	title={Smoothing the Bartnik boundary conditions and other results on Bartnik's quasi-local mass},
	journal={J. Geom. Phys. (to appear)}
	
}

\bib{JL}{article}{
   author={Jauregui, Jeffrey L.},
   author={Lee, Dan A.},
   title={Lower semicontinuity of mass under $C^0$ convergence and Huisken's isoperimetric mass},
   journal={J. Reine Angew. Math. (to appear)}
}

\bib{Lan}{article}{
   author={Lang, Urs},
   title={Local currents in metric spaces},
   journal={J. Geom. Anal.},
   volume={21},
   date={2011},
   number={3},
   pages={683--742}
}

\bib{LW}{article}{
	author={Lang, Urs},
	author={Wenger, Stefan},
	title={The pointed flat compactness theorem for locally integral
		currents},
	journal={Comm. Anal. Geom.},
	volume={19},
	date={2011},
	number={1},
	pages={159--189}
}

\bib{LS}{article}{
	author={Lee, Dan A.},
	author={Sormani, Christina},
	title={Stability of the positive mass theorem for rotationally symmetric
		Riemannian manifolds},
	journal={J. Reine Angew. Math.},
	volume={686},
	date={2014},
	pages={187--220}
}

\bib{LefSor}{article}{
	author={LeFloch, Philippe G.},
	author={Sormani, Christina},
	title={The nonlinear stability of rotationally symmetric spaces with low
		regularity},
	journal={J. Funct. Anal.},
	volume={268},
	date={2015},
	number={7},
	pages={2005--2065}
}

\bib{Mir}{article}{
   author={Michele Miranda Jr.},
   author={Diego Pallara},
   author={Fabio Paronetto},
   author={Marc Preunkert},   
   title={Heat semigroup and functions of bounded variation on Riemannian manifolds},
   journal={J. Reine Angew. Math.},
   volume={613},
   date={2007},
   pages={99--119}
}

\bib{Por}{article}{
   author={Portegies, Jacobus W.},
   title={Semicontinuity of eigenvalues under intrinsic flat convergence},
   journal={Calc. Var. Partial Differential Equations},
   volume={54},
   date={2015},
   number={2},
   pages={1725--1766}
}

\bib{SakSor}{article}{
	author={Sakovich, Anna},
	author={Sormani, Christina},
	title={Almost rigidity of the positive mass theorem for asymptotically
		hyperbolic manifolds with spherical symmetry},
	journal={Gen. Relativity Gravitation},
	volume={49},
	date={2017},
	number={9},
	pages={Art. 125, 26}
}

\bib{SY}{article}{
	author={Schoen, Richard},
	author={Yau, Shing-Tung},
	title={On the proof of the positive mass conjecture in general relativity},
	journal={Comm. Math. Phys.},
	volume={65},
	year={1979},
	pages={45--76},
}

\bib{S14}{article}{
   author={Sormani, Christina},
   title={Intrinsic Flat Arzela-Ascoli Theorems},
   journal={Comm. Anal. Geom.},
   volume={27},
   date={2019},
   number={1}
}

\bib{S16}{article}{
   author={Sormani, Christina},
   title={Scalar curvature and intrinsic flat convergence},
   conference={
      title={Measure theory in non-smooth spaces},
   },
   book={
      series={Partial Differ. Equ. Meas. Theory},
      publisher={De Gruyter Open, Warsaw},
   },
   date={2017},
   pages={288--338}
}

\bib{SorSta}{article}{
	author={Sormani, Christina},
	author={Stavrov Allen, Iva},
	title={Geometrostatic Manifolds of Small ADM Mass},
	journal={Comm. Pure Appl. Math. (to appear)},
	eprint={https://arxiv.org/abs/1707.03008}
}

\bib{SW}{article}{
   author={Sormani, Christina},
   author={Wenger, Stefan},
   title={The intrinsic flat distance between Riemannian manifolds and other
   integral current spaces},
   journal={J. Differential Geom.},
   volume={87},
   date={2011},
   number={1},
   pages={117--199}
}

\bib{Tak}{article}{
  author={Takeuchi, Shu},
  title={The pointed intrinsic flat distance between locally integral current spaces},
  date={2018},
  eprint={https://arxiv.org/abs/1809.07641}
}

\bib{Wen}{article}{
   author={Wenger, Stefan},
   title={Flat convergence for integral currents in metric spaces},
   journal={Calc. Var. Partial Differential Equations},
   volume={28},
   number={2},
   date={2007},
   pages={139--160}
}

\bib{Whi}{book}{
	author={Whitney, Hassler},
	title={Geometric integration theory},
	publisher={Princeton University Press, Princeton, N. J.},
	date={1957},
	pages={xv+387}
}

\bib{W}{article}{
	author={Witten, Edward},
	title={A new proof of the positive energy theorem},
	journal={Comm. Math. Phys.},
	volume={80},
	year={1981},
	pages={381-402},
}

 \end{biblist}
\end{bibdiv}

\end{document}